\newcounter{plm}
\newtheorem{theorem}[plm]{Theorem}
\newtheorem{lemma}[plm]{Lemma}
\newtheorem{remark}[plm]{Remark}
\newtheorem{proposition}[plm]{Proposition}
\begin{document}
\def\spacingset#1{\def\baselinestretch{#1}\small\normalsize}

\title{The Covariance Extension Equation:\\ A Riccati-type Approach to Analytic Interpolation}

\author{Yufang Cui,~\IEEEmembership{Student Member,~IEEE} and Anders Lindquist,~\IEEEmembership{Life Fellow,~IEEE}
\thanks{Y.\ Cui is with the Department of Automation, Shanghai Jiao Tong University, Shanghai, China; {email: cui-yufang@sjtu.edu.cn} and A.\ Lindquist is with the Department of  Automation and the School of Mathematical Sciences, Shanghai Jiao Tong University, Shanghai, China; {email: alq@kth.se}}
} 

\maketitle

\setlength{\parindent}{15pt}
\parskip 6pt
\def\spacingset#1{\def\baselinestretch{#1}\small\normalsize}
\newcommand{\E}{\operatorname{E}}
\newcommand{\bX}{\mathbf X}
\newcommand{\bH}{\mathbf H}
\newcommand{\bA}{\mathbf A}
\newcommand{\bB}{\mathbf B}
\newcommand{\bN}{\mathbf N}

\newcommand{\bHo}{{\stackrel{\circ}{\mathbf H}}}

\newcommand{\bHom}{{\stackrel{\circ\,\,}{{\mathbf H}_{t}}}{\hspace*{-13pt}\phantom{H}}^-}
\newcommand{\bHop}{{\stackrel{\circ\,\,}{{\mathbf H}_{t}}}{\hspace*{-13pt}\phantom{H}}^+}
\newcommand{\bHotwo}{{\stackrel{\circ\,\,\,\,\,}{{\mathbf H}_{t_2}}}{\hspace*{-16pt}\phantom{H}}^-}
\newcommand{\bHomn}{{\stackrel{\circ}{{\mathbf H}}}{\hspace*{-9pt}\phantom{H}}^-}
\newcommand{\bHopn}{{\stackrel{\circ}{{\mathbf H}}}{\hspace*{-9pt}\phantom{H}}^+}

\newcommand{\EbHom}{\E^{{\stackrel{\circ\,\,}{{\mathbf H}_{t}}}{\hspace*{-11pt}\phantom{H}}^-}}
\newcommand{\EbHop}{\E^{{\stackrel{\circ\,\,}{{\mathbf H}_{t}}}{\hspace*{-10pt}\phantom{H}}^+}}

\spacingset{.97}

\begin{abstract} 
Analytic interpolation problems with rationality and derivative constraints are ubiquitous in systems and control. This paper provides a new method for such problems, both in the scalar and matrix case, based on a non-standard Riccati-type equation. The rank of the solution matrix is the same as the degree of the interpolant, thus providing a natural approach to model reduction.  A  homotopy continuation method is presented and applied to some problems in modeling and robust control. We also address a question  on the positive degree of a covariance sequence originally posed by Kalman.
\end{abstract}


\newcommand{\mR}{{\mathbb R}}
\newcommand{\mZ}{{\mathbb Z}}
\newcommand{\mN}{{\mathbb N}}
\newcommand{\mE}{{\mathbb E}}
\newcommand{\mC}{{\mathbb C}}
\newcommand{\mD}{{\mathbb D}}
\newcommand{\bU}{{\mathbf U}}
\newcommand{\bW}{{\mathbf W}}
\newcommand{\cF}{{\mathcal F}}

\newcommand{\trace}{{\rm trace}}
\newcommand{\rank}{{\rm rank}}
\newcommand{\Real}{{\Re}e\,}
\newcommand{\half}{{\frac12}}

\spacingset{1}

\section{Introduction}\label{sec:intro}

Analytic interpolation problems abound in systems and control, occurring in spectral estimation, robust control, system identification and signal processing, to mention a few. In the scalar case, the most general problem formulation goes as follows.
Given $m+1$ distinct complex numbers $z_0,z_1,\dots,z_m$ in the open unit disc $\mathbb{D}:=\{z \mid |z|<1\}$, consider the problem  to find a real Carath\'eodory function mapping the unit disc $\mathbb{D}$ to the open right half-plane, i.e., a real function $f$ that is analytic in $\mathbb{D}$ and satisfies $\text{Re}\{f(z)\}>0$ there, and which in addition satisfies the interpolation conditions 
\begin{align}
\label{interpolation}
 \frac{f^{(k)}(z_{j})}{k!}=w_{jk},\quad&j=0,1,\cdots,m,   \\
    &   k=0,\cdots n_{j}-1 \notag
\end{align}
where $f^{(k)}$ is the $k$:th derivative of $f$, and the interpolation values $\{w_{jk}; j=0,1,\cdots,m, k=0,\cdots n_{j}-1\}$ are complex numbers in the open right half plane $\mathbb{C}^+$ that occur in conjugate pairs.  In addition we impose the complexity constraint that the interpolant  $f$ is rational of degree at most 
\begin{equation}
\label{deg(f)}
n:=\sum_{j=0}^{m}n_j -1.
\end{equation}
To simplify calculations, we  normalize the problem by setting $z_0=0$ and $ f(0)=\tfrac{1}{2} $, which can be achieved through a simple M{\"o}bius transformation. Since $f$ is a real function,  $f^{(k)}(\bar{z}_j)/ k!=\bar{w}_{jk}$ is an interpolation condition whenever $f^{(k}(z_j)/ k!=w_{jk}$ is.

For $m=0$ and $n_0=n+1$,  this becomes the {\em rational covariance extension problem\/} introduced by Kalman \cite{Kalman-81} and completely solved in steps in \cite{Gthesis,G87,BLGuM,BLpartial,BGuL}. This problem, which is equivalent to determining a rational positive real function of prescribed maximal degree given a partial covariance sequence, is a basic problem in signal processing and speech processing \cite{b12} and system identification \cite{b13,LPbook}.

With $n_0=n_1=\dots=n_m=1$,  we have the regular {\em Nevanlinna-Pick interpolation problem with degree constraint\/} \cite{b15,b1,BLkimura} occurring in robust control \cite{DFT}, high-resolution spectral estimation \cite{b2,GL1}, simultaneous stabilization \cite{Ghosh} and many other problems in systems and control. In fact, the Nevanlinna-Pick interpolation problem to find a Carath\'eodory function that interpolates the given data was early used in systems and control \cite{b9,b10}. The general Nevanlinna-Pick interpolation problem described above, allowing derivative constraints, was studied  in \cite{blomqvist}, motivated by $H^\infty$ control problems with multiple unstable poles and/or zeros in the plant. 
Such problems could not be handled by a classical interpolation approach \cite[p. 18]{GreenLimebeer}. 

The early work on the rational covariance extension problem \cite{Gthesis,G87,BLGuM} had nonconstructive proofs based on topological degree theory. A first attempt  to provide an algorithm was presented in \cite{BLpartial}, where a new nonstandard Riccati-type equation called the Covariance Extension Equation (CEE) was introduced. However, this approach was completely superseded by a convex optimization approach \cite{BGuL,b1}, and thus abandoned. However, in a brief paper \cite{b6}, it was shown that the regular Nevanlinna-Pick interpolation problem with degree constraint could also be solved by the Covariance Extension Equation, and thus it was shown that CEE  is universal in the sense that it can be used to solve more general analytic interpolation problems by only changing certain parameters. This idea was then used in \cite{CLccdc} to attach the general problem presented above. A first attempt to generalize this method to multivariable analytic interpolation problems was then made in \cite{CLcdc19}, and we shall pursue this inquiry in this paper. 

To provide basic insight into ideas behind the CEE approach, in Section~\ref{sec:prel} we shall review its application to the rational covariance extension problem, and also bring up the issue of the importance to distinguish between positive and algebraic degree of partial covariance sequences. This is important since  the CEE approach provides a simple tool for model reduction. In Section~\ref{sec:generalscalar} we deal with the general scalar problem formulated above and provide a numerical algorithm based on  homotopy continuation in the style of \cite{BFL}. Section~\ref{sec:multivariable} is devoted to the multivariable generalization, which turns out to be a challenging problem. The results fall somewhat short of what the scalar case promises, and, given some results in \cite{Takyar}, we suspect that this is due to problems introduced by the nontrivial Jordan structure of the multivariable case. In Section~\ref{sec:examples} we illustrate our theory with some numerical examples, and finally in Section~\ref{sec:conclusions} we provide some conclusions. 

\section{Preliminaries on Covariance Extension}\label{sec:prel}

To clarify basic concepts and set notation we first develop and review basic theory for the the special case 
that $m=0$, $z_0=0$ and 
\begin{equation}
\label{wc}
w_{0k}=c_k, \quad k=0,1,\dots,n,
\end{equation}
where the Toeplitz matrix
\begin{equation}
\label{Toepliz}
T=\begin{bmatrix}c_0&c_1&\cdots&c_n\\c_1&c_0&\cdots&c_{n-1}\\ \vdots&\vdots&\ddots&\vdots\\c_n&c_{n-1}&\cdots&c_0
\end{bmatrix}
\end{equation}
is positive definite. A sequence $(c_0,c_1,\dots,c_n)$ with the property $T>0$ is called a {\em positive covariance sequence}. To normalize the problem we set $c_0=\tfrac12$.

\subsection{The rational covariance extension problem}

 If $f$ is a Carath\'eodory function, then
\begin{equation}
\label{phi+}
\phi_+(z):= f(z^{-1})
\end{equation}
is a {\em positive real\/} function. The problem is then reduced  to finding a rational positive real function 
\begin{equation}
\label{covexpansion}
\phi_+(z)=\tfrac{1}{2}+c_{1}z^{-1}+c_{2}z^{-2}+c_{3}z^{-3}+\cdots ,
\end{equation}
of degree at most $n$ for which only the first $n$ coefficients $c_1, c_2,\dots,c_n$ are specified. This is the rational covariance extension problem. In fact,
\begin{equation}\label{phi+2phi}
\phi(z):=\phi_+(z)+\phi_+(z^{-1})=\sum_{k=-\infty}^{\infty}c_{k}z^{-k}>0, \quad z\in \mathbb{T},
\end{equation}
where $\mathbb{T}$ is the unit circle $\{ z=e^{i\theta}\mid 0\leq\theta <2\pi\}$. Hence $\phi$ is a power spectral density, and therefore there is a minimum-phase spectral factor $v(z)$ such that 
\begin{equation}
\label{spectralfactor}
v(z)v(z^{-1})=\phi(z).
\end{equation}
It is well-known \cite{LPbook} that passing normalized white noise $\{ u(t)\}_{t\in\Bbb{Z}}$ through a shaping filter with transfer function $v(z)$, i.e., 
\[
\text{white noise}\stackrel{u}\negmedspace{\longrightarrow}\fbox{$v(z)$}\negmedspace\stackrel{y}
{\longrightarrow} 
\]
until steady state, the output  $\{ y(t)\}_{t\in\Bbb{Z}}$ is a stationary process with power spectral density $\phi(e^{i\theta})$, $\theta\in [-\pi,\pi]$. Moreover, the coefficient $(c_0,c_1,c_2,\dots)$ are the covariance lags
\begin{equation}
\label{ }
c_k=\E\{y(t+k)y(t)\}.
\end{equation}

Since $\phi_+(z)$ is rational of degree at most $n$, it can be represented as 
\begin{subequations}
\label{abphi+}
\begin{equation}
\label{ab2phi+}
\phi_+(z)=\frac12\frac{b(z)}{a(z)},
\end{equation}
where 
\begin{align}
    a(z)&=z^n+a_1z^{n-1}+\dots +a_n \label{a}\\
    b(z)&=z^n+b_1z^{n-1}+\dots +b_n \label{b}
\end{align}
\end{subequations}
are {\em Schur polynomials}, i.e., monic polynomials with all its roots in the open unit disc. 
Consequently, a simple calculation shows that
\begin{equation}
\label{v}
v(z)=\rho\frac{\sigma(z)}{a(z)},
\end{equation}
where $\rho >0$ and
\begin{equation}
\label{sigma}
\sigma(z)=z^n+\sigma_1z^{n-1}+\dots +\sigma_n 
\end{equation} 
is a Schur polynomial satisfying
\begin{equation}
\label{absigma}
a(z)b(z^{-1})+b(z)a(z^{-1})=2\rho^2\sigma(z)\sigma(z^{-1}).
\end{equation}

\begin{theorem}\label{thm:covext}
Let $(c_0,c_1,\dots,c_n)$ be a positive covariance sequence. Then, given any Schur polynomial \eqref{sigma}, there is one and only one Schur polynomial \eqref{a} and $\rho>0$ such that \eqref{v}
 is a shaping filter for $(c_0,c_1,\dots,c_n)$. The mapping from $\sigma$ to $(a,\rho)$ is diffeomorphism. 
\end{theorem}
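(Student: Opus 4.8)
The plan is to recast the existence and uniqueness of $(a,\rho)$ as a trigonometric moment problem, settle it by strictly convex optimization, and then read off the diffeomorphism statement from the implicit function theorem applied to the optimality condition. By \eqref{spectralfactor}, saying that $v(z)=\rho\,\sigma(z)/a(z)$ is a shaping filter for $(c_0,\dots,c_n)$ is the same as saying that the spectral density $\Phi(e^{i\theta}):=v(e^{i\theta})v(e^{-i\theta})=\rho^{2}\Psi(e^{i\theta})/P(e^{i\theta})$, with $\Psi:=|\sigma(e^{i\theta})|^{2}$ and $P:=|a(e^{i\theta})|^{2}$, reproduces the given covariances $c_0,\dots,c_n$. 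Since $\sigma$ is a fixed Schur polynomial, $\Psi$ is a fixed, strictly positive trigonometric polynomial of degree $n$, so it suffices to produce a \emph{unique} strictly positive trigonometric polynomial $Q:=\rho^{-2}P$ of degree at most $n$ with
\begin{equation}
\label{pp:moment}
\int_{-\pi}^{\pi}e^{-ik\theta}\,\frac{\Psi(e^{i\theta})}{Q(e^{i\theta})}\,\frac{d\theta}{2\pi}=c_k,\qquad k=0,1,\dots,n;
\end{equation}
the monic Schur polynomial $a$ and the scalar $\rho>0$ are then recovered from the unique (Fej\'er--Riesz) Schur spectral factorization $Q=\rho^{-2}|a|^{2}$, and $b$ from \eqref{ab2phi+} via \eqref{absigma}.

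\emph{Existence and uniqueness via convex optimization.} On the open convex cone $\mathcal{Q}_n$ of strictly positive trigonometric polynomials of degree $\le n$, introduce
\begin{equation}
\label{pp:J}
\mathbb{J}(Q)=\langle c,Q\rangle-\int_{-\pi}^{\pi}\Psi(e^{i\theta})\log Q(e^{i\theta})\,\frac{d\theta}{2\pi},
\end{equation}
where $\langle c,Q\rangle$ is the linear pairing of $Q$ with the data, normalized so that $\langle c,|p|^{2}\rangle=p^{*}Tp$ for every polynomial $p$ of degree $\le n$. Three facts drive the argument, in order: (i) $\mathbb{J}$ is strictly convex on $\mathcal{Q}_n$, since $Q\mapsto\langle c,Q\rangle$ is linear and $Q\mapsto-\int\Psi\log Q$ is strictly convex (as $\Psi>0$); (ii) $\mathbb{J}$ is proper on $\mathcal{Q}_n$, and this is exactly where $T>0$ enters: by Fej\'er--Riesz every nonnegative $Q$ of degree $\le n$ is $|p|^{2}$ with $\deg p\le n$, so $T>0$ makes $\langle c,Q\rangle=p^{*}Tp$ comparable to $\|Q\|$ on the closed cone, which together with the radial growth $t\mapsto\mathbb{J}(tQ)$ and the logarithmic barrier $-\int\Psi\log Q$ keeps the sublevel sets compact inside $\mathcal{Q}_n$, away from $\partial\mathcal{Q}_n$; hence $\mathbb{J}$ attains its infimum at a unique interior point $\hat Q$; (iii) the stationarity condition $d\mathbb{J}(\hat Q)=0$, obtained by perturbing $\hat Q$ by an arbitrary trigonometric polynomial of degree $\le n$, is precisely \eqref{pp:moment}. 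Convexity makes this stationary point the global minimizer and conversely, so \eqref{pp:moment} has a unique positive solution $Q=\hat Q$; spectral factorization, unique once the factor is required monic with its roots in $\mathbb{D}$, then delivers the unique $(a,\rho)$, and one checks that the resulting $\phi_+=\tfrac12 b/a$ is positive real of degree $\le n$ with $a,b$ Schur. This gives the first two assertions.

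\emph{The diffeomorphism.} The minimizer $\hat Q$, hence $(a,\rho)$, depends on $\sigma$ only through the coefficients of $\Psi=|\sigma|^{2}$, which depend real-analytically on $\sigma$. Writing \eqref{pp:moment} as $G(q,\sigma)=0$ with $G$ real-analytic on $\mathcal{Q}_n\times\{\text{Schur }\sigma\}$, the partial Jacobian $\partial G/\partial q$ at $\hat q$ is the Hessian of $\mathbb{J}$, which is positive definite by the strict convexity in (i); the implicit function theorem therefore shows $\sigma\mapsto(a,\rho)$ is real-analytic. For bijectivity and smoothness of the inverse I would argue straight from \eqref{absigma}: given any monic Schur $a$ that actually occurs, the covariance-matching conditions fix $b$ through \eqref{ab2phi+} (a solvable linear system), whereupon \eqref{absigma} presents $2\rho^{2}\sigma(z)\sigma(z^{-1})$ as a known strictly positive trigonometric polynomial, and its Schur factorization returns $(\sigma,\rho)$ real-analytically; this is a two-sided inverse, so $\sigma\mapsto(a,\rho)$ is a real-analytic diffeomorphism onto its image, the graph of $a\mapsto\rho$ over the open set of admissible $a$'s.

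\emph{Main obstacle.} The crux is the properness in step (ii): one must rule out that a minimizing sequence lets $Q$ approach $\partial\mathcal{Q}_n$, i.e.\ that the limiting $a$ acquires a unimodular root or that $\rho$ degenerates to $0$ or $\infty$. This is precisely the place where positive definiteness of $T$ is indispensable, and handling it cleanly — in particular boundary polynomials whose only zeros are isolated, where $\log Q$ may still be integrable and the functional does not obviously blow up, so one rules them out instead by the moment condition \eqref{pp:moment} — is the delicate part. Once properness is in hand, strict convexity yields existence, uniqueness and a positive-definite Hessian at $\hat Q$, and the remaining smoothness and bijectivity claims follow routinely from the implicit function theorem and the uniqueness of spectral factorization. (Historically, surjectivity was first obtained by a topological-degree computation for the map $a\mapsto\sigma$ and injectivity by a separate argument; the variational proof above subsumes both.)
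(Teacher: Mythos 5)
Your argument is correct in substance, but it is not the paper's route: the paper does not prove Theorem~\ref{thm:covext} at all, deferring the existence part to Georgiou's nonconstructive, degree-theoretic argument \cite{Gthesis,G87} and the uniqueness and diffeomorphism statements to the global-analysis proof of \cite{BLGuM}. What you give is essentially the convex-optimization proof of \cite{BGuL} (see also \cite{SIGEST,BEL}), which the paper mentions immediately after the theorem only to set aside in favour of the CEE machinery. The trade-off between the two routes: the cited original proofs establish surjectivity by topological degree and smoothness by a global inverse function theorem, and are nonconstructive; your variational route packages existence and uniqueness into one strictly convex problem whose stationarity condition is exactly covariance matching, makes the positive-definite Hessian available for the implicit-function-theorem step, and underlies actual algorithms. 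Your explicit inverse $(a,\rho)\mapsto\sigma$, via the linear determination of $b$ from $a$ and the data (essentially \eqref{c2b}) followed by spectral factorization of $ab^*+ba^*$ in \eqref{absigma}, is the same observation used in \cite{BLGuM} to get smoothness of the inverse, so the diffeomorphism conclusion is reached legitimately.

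One point needs repair: your properness claim (ii) is misstated. Because $\log Q$ remains integrable when $Q$ has isolated zeros on the unit circle, $\mathbb{J}$ does \emph{not} blow up at $\partial\mathcal{Q}_n$, and its sublevel sets are not compactly contained in the open cone. The correct argument, as in \cite{BGuL}, is: $T>0$ gives $\langle c,Q\rangle\geq\delta\|Q\|$ on the closed cone (Fej\'er--Riesz plus $p^*Tp\geq\lambda_{\min}|p|^2$), so $\mathbb{J}$ is coercive and attains its minimum on the closed cone; a boundary minimizer $Q_0\neq 0$ is then excluded because any unimodular zero of $Q_0$ has even order at least two while $\Psi=|\sigma|^2>0$ on the circle ($\sigma$ Schur), so $\int\Psi P/Q_0$ diverges for every strictly positive direction $P$ and the one-sided directional derivative of $\mathbb{J}$ at $Q_0$ toward the interior is $-\infty$, contradicting minimality. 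Your suggestion to rule out boundary points ``by the moment condition'' does not work as stated, since stationarity is not available at a nonstationary boundary point; with the directional-derivative argument substituted, the rest of your proof (stationarity equals moment matching, Fej\'er--Riesz factorization to recover $(a,\rho)$, implicit function theorem for smooth dependence on $\sigma$) goes through.
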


The existence part of Theorem~\ref{thm:covext} was proved in \cite{Gthesis} (also see \cite{G87}) and the rest of the theorem in \cite{BLGuM}. Note that $\sigma(z)$ and $a(z)$ may have common roots, so the degree of $v(z)$ might be less than $n$. Via \eqref{absigma} there is a one-one correspondence between $v$ and $\phi_+$, and they have the same degree.

For each parameter $\sigma$ there is a convex optimization problem solving for $(a,\rho)$, which first appeared in \cite{BGuL} (also see \cite{SIGEST,BEL}), but here we shall consider a different method of solution described next. 

\subsection{Covariance Extension  Equation}\label{sec:CEE}

Given the parameter polynomial \eqref{sigma}, we introduce
\begin{equation}\label{sigmaGammah}
\sigma = \bmatrix \sigma_1\\ \sigma_2\\ \vdots\\ \sigma_n \endbmatrix,
\;
\Gamma  = \begin{bmatrix} -\sigma_1  & 1 & 0 & \cdots & 0\\ -\sigma_2 
& 0 & 1 & \cdots & 0\\
\vdots& \vdots & \vdots & \ddots & \vdots\\ -\sigma_{n-1} & 0 & 0 &
\cdots & 1\\ -\sigma_n & 0 & 0 & \cdots & 0 
\end{bmatrix} ,
\;h=
\begin{bmatrix}1\\0\\ \vdots\\0 \end{bmatrix}. 
\end{equation}
Moreover, we represent the covariance data in terms of the first $n$ coefficients in the expansion 
\begin{subequations}\label{uUconstr}
\begin{equation}\label{expansion}
\begin{split}
&\frac{z^n}{z^n + c_1 z^{n-1}+\dots + c_n}\\ 
&\qquad\qquad\quad= 1 - u_1 z^{-1} - u_2z^{-2} - u_3 z^{-3} - \dots
\end{split}
\end{equation}
about infinity and define 
\begin{equation}\label{Uu}
u = \begin{bmatrix} u_1\\u_2\\ \vdots  \\u_n \end{bmatrix} ,
\qquad 
U  =  \begin{bmatrix} 0&  &   &  & \\ u_1&0 & & &\\ u_2&u_1& & & \\
\vdots &\vdots& \ddots&  &\\
u_{n-1}&u_{n-2}&\cdots&u_1&0\end{bmatrix}
\end{equation} 
\end{subequations} 
and the function $g: \Bbb{R}^{n\times n}\to \Bbb{R}^n$ given by
\begin{equation}\label{g(P)} 
g(P)= u +U\sigma + U\Gamma Ph .
\end{equation}
The {\em Covariance Extension Equation\/} (CEE), introduced in \cite{BLpartial}, is the nonstandard
 Riccati equation  
\begin{equation} \label{CEE}
 P = \Gamma (P-Phh'P) \Gamma' + g(P)g(P)' ,
\end{equation} 
where $^\prime$ denotes transposition.  The following theorem was proved in \cite{BLpartial}.


\begin{theorem}\label{thm:CEE}
Let $(c_0,c_1,\dots,c_n)$ be a positive  covariance sequence. Then, for each Schur polynomial \eqref{sigma}, there is a unique symmetric, positive semi-definite  solution $P$ of CEE satisfying $h'Ph<1$. Moreover, for each $\sigma$ there is a unique shaping filter \eqref{v}  for $(c_0,c_1,\dots,c_n)$ and a corresponding positive real function \eqref{ab2phi+},
where $a(z)$, $b(z)$ and $\rho$ are given in terms of the corresponding $P$ by
\begin{equation}
\label{abrho}
\begin{split}
&a=(I-U)(\Gamma Ph+\sigma)-u,\\
&b=(I+U)(\Gamma Ph+\sigma)+u,\\
&\rho=\sqrt{1-h'Ph} .
\end{split}
\end{equation} 
\end{theorem}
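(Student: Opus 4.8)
The plan is to take Theorem~\ref{thm:covext} for granted and to show that the formulas \eqref{abrho} set up a one-to-one correspondence between the symmetric positive semidefinite solutions $P$ of the CEE \eqref{CEE} with $h'Ph<1$ and the shaping filters \eqref{v} for $(c_0,\dots,c_n)$; since Theorem~\ref{thm:covext} asserts that there is exactly one such filter for each Schur polynomial \eqref{sigma}, this yields both the existence and the uniqueness of $P$. The argument hinges on one elementary observation about \eqref{abrho}: writing $w:=\Gamma Ph+\sigma$, the two lines of \eqref{abrho} read $a=w-Uw-u$ and $b=w+Uw+u$, so $a+b=2w$ and $b-a=2Uw+2u$; hence, whenever $a$ and $b$ are produced from $P$ by \eqref{abrho}, one automatically has $\Gamma Ph=\tfrac12(a+b)-\sigma$ and, by \eqref{g(P)}, $g(P)=u+Uw=\tfrac12(b-a)$. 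Since $P=P'$ gives $\Gamma Phh'P\Gamma'=(\Gamma Ph)(\Gamma Ph)'$, the CEE \eqref{CEE} is therefore equivalent to the linear Stein (discrete Lyapunov) equation
\[
P-\Gamma P\Gamma'=g(P)g(P)'-(\Gamma Ph)(\Gamma Ph)',
\]
whose right-hand side, once $a,b$ are regarded as fixed, equals $\tfrac14(b-a)(b-a)'-\bigl(\tfrac12(a+b)-\sigma\bigr)\bigl(\tfrac12(a+b)-\sigma\bigr)'$. Because \eqref{sigma} is a Schur polynomial, the companion matrix $\Gamma$ has all its eigenvalues in $\mathbb D$, so this Stein equation has a \emph{unique} solution for each prescribed $a,b$.

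For existence I would start from the pair $(a,\rho)$ furnished by Theorem~\ref{thm:covext} for the given $\sigma$, determine $b$ from $a,\rho,\sigma$ through the factorisation identity \eqref{absigma}, and let $P$ be the unique solution of the Stein equation above for this $a,b$; it is automatically symmetric. The crux is a \emph{self-consistency} check: one must verify that this $P$ satisfies $g(P)=\tfrac12(b-a)$ and $\Gamma Ph=\tfrac12(a+b)-\sigma$, for only then does it solve the genuine nonlinear equation \eqref{CEE} and only then do the formulas \eqref{abrho} return the chosen $a,b$. This splits into two parts: the relation $u+\tfrac12U(a+b)=\tfrac12(b-a)$, which involves no $P$ and is exactly the requirement that $\phi_+=\tfrac12b/a$ reproduce $c_1,\dots,c_n$ (recall \eqref{expansion}); and the relation $\Gamma Ph=\tfrac12(a+b)-\sigma$ for the first column of $P$, which I would extract from the Stein equation using \eqref{absigma}. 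Granting consistency, a short computation with the scalar identities just obtained gives $h'Ph=1-\rho^2$, so $h'Ph<1$ follows from $\rho>0$; and $P\succeq0$ I would read off from the interpretation of $P$ as the difference of two ordered algebraic-Riccati solutions — a forward and a backward predictor — in the fast filtering algorithm behind the CEE, or, if a self-contained argument is preferred, directly from the positive definiteness of the Toeplitz matrix \eqref{Toepliz}.

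For uniqueness, let $P$ be \emph{any} symmetric positive semidefinite solution of \eqref{CEE} with $h'Ph<1$; set $\rho:=\sqrt{1-h'Ph}>0$ and define $a,b$ via \eqref{abrho}. By the computation of the first paragraph, $P$ then solves the associated Stein equation, and from \eqref{CEE} one reads off that $a$ is a Schur polynomial, that \eqref{absigma} holds, and that $\phi_+=\tfrac12b/a$ is positive real with the prescribed expansion \eqref{covexpansion} — that is, \eqref{v} is a shaping filter for $(c_0,\dots,c_n)$. Here the term $g(P)g(P)'$ in \eqref{CEE} together with the lower-triangular Toeplitz structure of $U$ enforces the covariance matching, while the sign pattern $g(P)g(P)'-(\Gamma Ph)(\Gamma Ph)'$ on the right-hand side of the Stein equation, combined with $P\succeq0$, forces the roots of $a$ into $\mathbb D$. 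By the uniqueness part of Theorem~\ref{thm:covext}, $(a,\rho)$ and hence $b$ are uniquely determined by $\sigma$; and since $P$ must be the unique solution of the Stein equation with that $a,b$, $P$ is unique.

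The step I expect to be the main obstacle is the self-consistency verification in the existence part: turning $g(P)=\tfrac12(b-a)$ and $\Gamma Ph=\tfrac12(a+b)-\sigma$ into the polynomial identity \eqref{absigma} is a real computation in which $\Gamma$, the Toeplitz matrix $U$, and the coefficients $u_k$ of \eqref{expansion} all interlock. Close behind is the positivity $P\succeq0$: the usual Stein series $\sum_{k\ge0}\Gamma^kQ(\Gamma')^k$ gives nothing here because the matrix $Q$ is indefinite, so one really seems to need the filtering-theoretic meaning of $P$ (or an ad hoc factorisation). A smaller loose end is to confirm that $h'Ph<1$ is precisely the inequality selecting the positive-real branch among the algebraic solutions of \eqref{CEE}.
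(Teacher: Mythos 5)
Your skeleton — take Theorem~\ref{thm:covext} as given and set up a correspondence between CEE solutions and pairs $(a,b)$ through a Stein equation — is the same strategy the paper uses (it states the theorem with a citation to \cite{BLpartial}, but the machinery of Section~\ref{sec:generalscalar}, specialized back to covariance data, is exactly this correspondence). Your purely algebraic observation is also correct: with $u=C^{-1}c$, $U=I_n-C^{-1}$ as in \eqref{Cc}, the identity $u+\tfrac12U(a+b)=\tfrac12(b-a)$ is equivalent to $b=2c+(2C-I_n)a$, i.e.\ to the covariance-matching relations \eqref{c2b}. The genuine gap in your existence half is, however, exactly the step you flag as the ``crux'' and do not carry out: if you \emph{define} $P$ as the solution of the Stein equation with the right-hand side frozen at the chosen $(a,b)$, nothing in that equation alone guarantees the self-consistency $\Gamma Ph=\tfrac12(a+b)-\sigma$, and without it $P$ does not solve the nonlinear equation \eqref{CEE} nor return $(a,b)$ through \eqref{abrho}. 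The paper dissolves this by constructing $P$ the other way around: take the shaping filter from Theorem~\ref{thm:covext}, realize it as in \eqref{phi+real}--\eqref{wreal}, and let $P$ be the minimal (stabilizing) solution of the algebraic Riccati equation \eqref{Riccati}. Then Lemma~\ref{lem:ab2g} gives $g=\tfrac12(b-a)$, Lemma~\ref{lem:P2g} gives $g=\Gamma Ph+\sigma-a$ (hence the consistency relation), the computation leading to \eqref{CEEprel} shows that this $P$ satisfies CEE, and $P\succeq 0$ together with $\rho^2=1-h'Ph>0$ come for free from stochastic realization theory via \eqref{rhok} — precisely the facts you defer to ``the filtering-theoretic meaning of $P$'' without proof.

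The uniqueness half has a similar gap. Starting from an arbitrary symmetric $P\succeq0$ solving \eqref{CEE} with $h'Ph<1$, covariance matching of the resulting $b/a$ is indeed automatic from \eqref{abrho} and the identity above, but before Theorem~\ref{thm:covext} can be invoked you must show that $a$ is a Schur polynomial and that $\tfrac12 b/a$ is positive real. Your justification (``the sign pattern \dots combined with $P\succeq0$ forces the roots of $a$ into $\mathbb D$'') is an assertion, not an argument; this converse implication is the hard half of the theorem and is the part actually proved in \cite{BLpartial}, which is why the present paper cites that reference rather than reproving it. For the record, the paper's own uniqueness mechanism in the general case (Theorem~\ref{CEEthm}) is the reformulation \eqref{ab2P} as a Lyapunov equation in the nilpotent shift $J$, combined with uniqueness of $(a,b)$ on the leaf $\mathcal{P}_n(\sigma)$; to make your proof complete you would need either that route together with the converse direction from \cite{BLpartial}, or an independent argument that every admissible solution of \eqref{CEE} produces a pair in $\mathcal{P}_n(\sigma)$.
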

Here $a:=(a_1,a_2,\dots,a_n)'$ and $b:=(b_1,b_2,\dots,b_n)'$. Finally
\begin{equation}
\label{deg=rank}
\deg v =\deg\phi_+ =\rank P .
\end{equation}
Note that $P$ looses rank, i.e., has rank less than $n$, only on a thin (lower-dimensional) subset of parameters $u$ \cite{BLpartial}.

\subsection{Algebraic and positive degree}

For the moment, let $\phi_+(z)$ be any rational function of degree $d$, not necessarily positive real, given by \eqref{covexpansion}. Then it has a representation \eqref{abphi+} with $n$ replaced by $d$. Identifying coefficients of powers of $z$ in $b(z)=2\phi_+(z)a(z)$ as done in \cite{BLpartial}, we obtain
\begin{subequations}\label{c2ab}
\begin{equation}\label{c2b}
\begin{bmatrix}
b_1\\b_2\\ \vdots\\b_d
\end{bmatrix}
=2\begin{bmatrix}
c_1\\c_2\\ \vdots\\c_d
\end{bmatrix}
+
\begin{bmatrix}
1&&&\\2c_1&1&&\\ \vdots&\vdots&&\\
2c_{d-1}&2c_{d-2}&\dots&1
\end{bmatrix}
\begin{bmatrix}
a_1\\a_2\\ \vdots\\a_d
\end{bmatrix}
\end{equation}
for  nonnegative powers and 
\begin{equation}\label{Hankel} 
\begin{bmatrix} c_1& c_2& \cdots &c_{d}\\ 
c_2& c_3 &\cdots & c_{d+1}\\
\vdots &\vdots& \ddots& \vdots\\
 c_{d}&c_{d+1}&\cdots&c_{2d-1}\end{bmatrix}
\begin{bmatrix}
a_1\\a_2\\ \vdots\\a_d
\end{bmatrix}=
-\begin{bmatrix}
c_{d+1}\\c_{n+2}\\ \vdots\\c_{2d}
\end{bmatrix}
\end{equation}
\end{subequations}
for negative powers. The coefficient matrix in \eqref{Hankel} is a Hankel matrix that we denote $H_d$. By Kronecker's theorem \cite{Kalmanbook}, 
\begin{equation}
\label{Kronecker}
d:= \deg \phi_+(z) =\text{rank}\, H_\infty = \text{rank}\, H_d.
\end{equation}
Therefore $\phi_+(z)$ can be determined from a finite sequence $(c_0,c_1,\dots,c_n)$ of covariance lags, where  $n:=2d$.  We say that $d$ is the {\em algebraic degree\/} of $(c_0,c_1,\dots,c_n)$.

Therefore, at first blush, given a partial covariance sequence $(c_0,c_1,\dots,c_n)$,  we might assume that \eqref{c2ab} solves the rational covariance problem in a minimal-degree form. This idea underlies (at least the early work on) {\em subspace identification} \cite{Aoki,OverscheeDeMoor93,OverscheeDeMoor96}, where in general the biased ergodic estimates 
\begin{equation}
\label{ckest}
c_k=\frac{1}{N-k+1}\sum_{t=0}^{N-k}y_{t+k}y_t 
\end{equation}
would be used to insure that the corresponding Toeplitz matrix is positive definite. Then since 
\begin{displaymath}
c_k=h'F^{k-1}g
\end{displaymath}
where $\phi_+(z)$ has the realization 
\begin{equation}
\label{phi+real}
\phi_+(z)= \tfrac12 + h'(zI-F)^{-1}g,
\end{equation}
$(F,g,h)$ could be determined by minimal factorization of the Hankel matrix $H_d$. However, as pointed out in \cite{LP96}, this is incorrect and may lead to an $\phi_+(z)$ that is not positive real; also see \cite[Chapter 13]{LPbook}. In \cite{DLM} simple examples were given where subspace algorithms will fail.

In general we cannot achieve a solution $\phi_+$ to the rational covariance extension problem of a degree $d$ only half of $n$. By Theorem~\ref{thm:CEE}, the best we could do is
\begin{equation}
\label{p}
p:=\min_{\sigma}\rank\, P(\sigma),
\end{equation}
which we call the {\em positive degree\/} of the covariance sequence $(c_0,c_1,\dots,c_n)$. Since the algebraic degree can be determined from the rank of the Hankel matrix $H_d$ (also see \cite{Kalmanbook,GraggL}), in 1972 Kalman \cite{Kalmanprivate} posed the question whether there is a similar matrix-rank criterion for determining the positive degree. However, since then it has been shown \cite{BLpartial}  that for any $p$ between $[\tfrac{n}{2}]$ and $n$ there is an open set in $\mathbb{R}^n$ of covariance sequences $(c_1,c_2,\dots,c_n)$ for which $p$ is the positive degree. Hence it seems that we cannot get a better criterion than \eqref{p}.

\section{The general scalar problem}\label{sec:generalscalar}

Next we show that the Covariance Extension Equation is universal in the sense that it also solves the general analytic interpolation problem stated in the introduction, by merely adopting the parameters $(u,U)$ to the new interpolation data. 

\subsection{Some stochastic realization theory}\label{subsec:matrixrealization} 

We express the realization \eqref{phi+real} of $\phi_+(z)$ in the observable canonical form, where $h$ is defined as in \eqref{sigmaGammah},
\begin{equation}
\label{F}
F= \begin{bmatrix} -a_1  & 1 & 0 & \cdots & 0\\
-a_2 & 0 & 1 & \cdots & 0\\
\vdots& \vdots & \vdots & \ddots & \vdots\\
-a_{n-1} & 0 & 0 &\cdots & 1\\ 
-a_n & 0 & 0 & \cdots & 0 
\end{bmatrix}= J-ah' ,
\end{equation}
$J$ is the upward shift matrix, and $g$ is an $n$-vector to be determined. Note that this need not be a minimal realization, as there could be cancellations of common zeros of $a(z)$ and $b(z)$. 

\begin{lemma}\label{lem:ab2g}
The vector $g$ in \eqref{phi+real} is given by 
\begin{equation}
\label{ab2g}
g=\frac12 (b-a).
\end{equation}
\end{lemma}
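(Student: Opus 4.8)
The plan is to identify $g$ from the transfer function of the realization \eqref{phi+real}, exploiting the companion structure of $F$ in \eqref{F}. First I would isolate the strictly proper part: from the representation \eqref{ab2phi+} in terms of the monic polynomials \eqref{a} and \eqref{b},
\[
\phi_+(z)-\tfrac12=\tfrac12\,\frac{b(z)-a(z)}{a(z)},
\]
where $b(z)-a(z)=(b_1-a_1)z^{n-1}+\dots+(b_n-a_n)$ has degree at most $n-1$. Comparing with \eqref{phi+real}, it suffices to establish the companion-form identity
\[
h'(zI-F)^{-1}g=\frac{\widehat g(z)}{a(z)},\qquad \widehat g(z):=g_1z^{n-1}+g_2z^{n-2}+\dots+g_n ,
\]
$g_1,\dots,g_n$ being the entries of $g$. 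Matching numerators over the common denominator $a(z)$ — both of degree at most $n-1$ — then forces $\widehat g(z)=\tfrac12\bigl(b(z)-a(z)\bigr)$, which is exactly \eqref{ab2g}.

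The single computation to carry out is the identity above. Write $r(z):=(zI-F)^{-1}g$ with components $r_1(z),\dots,r_n(z)$, and set $\rho(z):=h'r(z)=r_1(z)$. Using $F=J-ah'$, where $J$ has ones on the superdiagonal so that the $i$-th component of $Jr$ is $r_{i+1}$ for $i<n$ and $0$ for $i=n$, the equation $(zI-F)r(z)=g$ reads
\[
z r_i(z)-r_{i+1}(z)+a_i\rho(z)=g_i\ \ (1\le i\le n-1),\qquad z r_n(z)+a_n\rho(z)=g_n .
\]
Unwinding the recursion $r_{i+1}=zr_i+a_i\rho-g_i$ from $r_1=\rho$ gives
\[
r_n(z)=\bigl(z^{n-1}+a_1z^{n-2}+\dots+a_{n-1}\bigr)\rho(z)-\bigl(g_1z^{n-2}+\dots+g_{n-1}\bigr),
\]
and inserting this into the last equation collapses everything to $a(z)\rho(z)=\widehat g(z)$, since $z^n+a_1z^{n-1}+\dots+a_n$ is the characteristic polynomial of this companion matrix. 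Hence $\rho(z)=\widehat g(z)/a(z)$, as required.

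I do not anticipate a real obstacle: the argument is short and uses only the structure of \eqref{F} and the identity $\phi_+=\tfrac12 b/a$. The one point worth a sentence is that the final numerator matching remains valid when $a(z)$ and $b(z)$ have common roots, i.e. when the realization \eqref{phi+real} is non-minimal — the equality $\widehat g(z)/a(z)=\tfrac12\bigl(b(z)-a(z)\bigr)/a(z)$ holds as rational functions with identical denominator and with both numerators of degree at most $n-1$, so the numerators coincide as polynomials irrespective of any cancellation.
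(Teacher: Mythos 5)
Your proof is correct, but it takes a different route than the paper. You compute the resolvent form directly: unwinding the recursion coming from the companion structure $F=J-ah'$ to show $h'(zI-F)^{-1}g=\hat g(z)/a(z)$ with $\hat g(z)=g_1z^{n-1}+\dots+g_n$, and then match numerators of $\phi_+(z)-\tfrac12=\tfrac12(b(z)-a(z))/a(z)$ over the literal common denominator $a(z)$. The paper instead applies the matrix inversion lemma to $b(z)/a(z)=1+2h'(zI-F)^{-1}g$, obtaining $a(z)/b(z)=1-2h'\bigl[zI-(J-(a+2g)h')\bigr]^{-1}g$, and reads off $b=a+2g$ from the fact that the denominator of this companion-form realization is the polynomial built from the first column $a+2g$. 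Your version is more self-contained --- you actually prove the companion-form transfer-function identity rather than invoking it --- and it disposes of the non-minimal case (common roots of $a$ and $b$) transparently, since the two numerators sit over the identical polynomial $a(z)$ and so must coincide; the paper's identification of ``the denominator polynomial'' is slightly more implicit on that point. What the paper's route buys is brevity and reuse: the same matrix-inversion-lemma device is applied again verbatim in the proof of Lemma~\ref{lem:P2g} to identify $\Gamma=J-\sigma h'$, so the two lemmas share one template.
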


\begin{proof}
From \eqref{ab2phi+} and \eqref{phi+real} we have
\begin{displaymath}
\frac{b(z)}{a(z)}=1 +2h'(zI-F)^{-1}g,
\end{displaymath}
to which we apply the matrix inversion lemma to obtain
\begin{displaymath}
\begin{split}
\frac{a(z)}{b(z)}&=1 -2h'(2gh'+zI-F)^{-1}g\\
&=1 -2h'[zI-(J-ah' -2gh')]^{-1}g.
\end{split}
\end{displaymath}
Consequently, since $b(z)$ is the denominator polynomial, we must have $a+2g=b$, from which \eqref{ab2g} follows.
\end{proof}

As a preliminary, let us review some facts from stochastic realization theory \cite[Section 6]{LPbook}. In view of \eqref{phi+2phi} and \eqref{phi+real}, the spectral density $\phi(z)$ may be written
\begin{subequations}
\begin{equation}
\label{M2phi}
\phi(z) = \begin{bmatrix} h'(zI-F)^{-1}&1\end{bmatrix}M(P)\begin{bmatrix}(z^{-1}I-F')^{-1}h\\1\end{bmatrix}
\end{equation}
for any symmetric $n\times n$ matrix $P$, where
\begin{equation}
\label{M(P)}
M(P)=\begin{bmatrix} P-FPF'&g-FPh\\g'-h'PF'&1-h'Ph   \end{bmatrix}.   
\end{equation}
\end{subequations}
As a straight-forward calculation shows, the left member of \eqref{M2phi} does not depend on $P$, as all terms containing $P$ cancel. However, $M(P)$ does depend on $P$, and, by the Positive Real Lemma (see, e.g. \cite[p. 200]{LPbook}), $\phi_+$ is positive real if and only if there is a $P$ such that 
\begin{equation}
\label{M(P)geq0}
M(P)\geq 0.
\end{equation}
In this case, $P$ must be positive semidefinite, and there is a minimum-rank factorization
\begin{equation}
\label{M(P)factors}
M(P)=\begin{bmatrix} k\\\rho\end{bmatrix}\begin{bmatrix} k'&\rho\end{bmatrix},
\end{equation}
where $k\in\mathbb{R}^n$ and $\rho\in\mathbb{R}$. Together with \eqref{M2phi} this yields \eqref{spectralfactor} with the spectral factor
\begin{equation}
\label{wreal}
v(z)=\rho + h'(zI-F)^{-1}k .
\end{equation}
There is a unique minimal symmetric solution of \eqref{M(P)geq0} in the ordering $\geq$ of symmmetric matrices, and from now on $P$ will denote precisely this solution.  Then \eqref{wreal} is the minimum-phase spectral factor with all poles and zeros in the open unit disc, i.e., \eqref{wreal}  is precisely \eqref{v}.  Moreover, from \eqref{M(P)factors} we also have
\begin{subequations}
\begin{align} 
\label{}
   P &=FPF'+kk'  \label{Lyapunov}  \\
  g  &= FPh+\rho k \label{Pk2g}\\
  \rho^2&= 1-h'Ph  , \label{rho}
\end{align}
\end{subequations}
from which we have the algebraic Riccati equation
\begin{equation}
\label{Riccati}
P=FPF'+(g-FPh)(1-h'Ph)^{-1}(g-FPh)' .
\end{equation}
Note that $\rho$ must be nonzero, or otherwise $v(z)$ would be identically zero by \eqref{v}, and thus the same would hold for the spectral density $\phi(z)$.
Therefore, in view of \eqref{rho},  
\begin{equation}
\label{h'Ph<1}
h'Ph<1.
\end{equation}

Since all eigenvalues of $F$ lie in the open unit disc, the Lyapunov equation \eqref{Lyapunov} has a unique solution
\begin{displaymath}
P=\sum_{j=0}^\infty F^jkk'(F')^j \geq 0
\end{displaymath}
 \cite[Proposition B.1.19, B.1.20]{LPbook}.
If $(F,k)$ is a reachable pair so that \eqref{wreal} is a minimal realization, then $P>0$ \cite[Proposition B.1.20]{LPbook}. If $\rank\, P=r<n$, there is a transformation $T$ and a positive definite $r\times r$ matrix $P_1$ such that 
\begin{displaymath}
TPT'=\begin{bmatrix}P_1&0\\0&0\end{bmatrix}.
\end{displaymath}
Setting
\begin{displaymath}
TFT^{-1} =\begin{bmatrix}F_{11}&F_{12}\\F_{21}&F_{22}\end{bmatrix},
\quad Tk=\begin{bmatrix}k_1\\k_2\end{bmatrix}, \quad   (T')^{-1}h=\begin{bmatrix}h_1\\h_2\end{bmatrix} ,
\end{displaymath}
it follows from \eqref{Lyapunov} that $F_{21}=0$ and $k_2=0$, and hence a straightforward calculation yields the minimal realization
\begin{displaymath}
v(z)=h_1'(zI-F_{11})^{-1}k_1 +\rho
\end{displaymath}
of degree $r=\rank\, P$. Moreover,
\begin{displaymath}
a(z)=\det (zI-F)^{-1}=\det (zI-F_{11})^{-1}\det (zI-F_{22})^{-1},
\end{displaymath}
so $\det (zI-F_{22})^{-1}$ must be the common factor in $\sigma(z)$ and $a(z)$ that is canceled. In view of \eqref{absigma}, $b(z)$ has the same common factor which is canceled in \eqref{ab2phi+}, and hence degree of $\phi_+(z)$  is also $r$. Thus 
\begin{equation}
\label{degree=rank}
\deg \phi_+(z)=\deg f=\rank\, P.
\end{equation} 
It is important to note that $P$ looses rank on a  thin set where zero cancelations occur. However, by considering the singular values of $P$, we can determine whether $P$ is close to being singular, which can then be  used for approximate model reduction.

\begin{remark}
Note that the  algebraic Riccati equation  \eqref{Riccati} is different from that of Kalman filtering. Indeed, if $\hat{x}(t)$ is the steady-state Kalman filter estimate of a stationary state process $x(t)$, then the  algebraic Riccati equation of Kalman filtering solves for the error covariance matrix
\begin{displaymath}
\Sigma:=\E\{[x(t)-\hat{x}(t)][x(t)-\hat{x}(t)]'\}=\Pi -P.
\end{displaymath}
where $\Pi:= \E\{x(t)x(t)'\}$, and  $P:=\E\{\hat{x}(t)\hat{x}(t)'\}$ is the matrix $P$ in our present setting {\rm \cite[Section 6.9]{LPbook}}.
\end{remark}

\begin{lemma}\label{lem:P2g}
The vectors $g$ and $k$ in \eqref{phi+real} and \eqref{wreal} are given by
\begin{subequations}\label{gk}
\begin{align}
   g &=\Gamma Ph+\sigma -a   \label{P2g}\\
   k & =\rho(\sigma -a), \label{P2k}
\end{align}
\end{subequations}
where $P$ is the minimal symmetric solution of \eqref{M(P)geq0}, or equivalently \eqref{Riccati}.
\end{lemma}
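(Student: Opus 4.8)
The plan is to read off both identities from the explicit formulas \eqref{abrho} for $a,b,\rho$ together with Lemma~\ref{lem:ab2g}, and from the fact that $F$ and $\Gamma$ share the same companion structure. First I would prove \eqref{P2g}. Subtracting the two equations in \eqref{abrho} gives $b-a=2U(\Gamma Ph+\sigma)+2u$, so by Lemma~\ref{lem:ab2g},
$g=\tfrac12(b-a)=u+U\sigma+U\Gamma Ph=g(P)$, i.e.\ the vector $g$ in the realization \eqref{phi+real} is precisely the function \eqref{g(P)} appearing in the CEE. On the other hand, the first equation in \eqref{abrho} rearranges to $g(P)=(\Gamma Ph+\sigma)-a$, which is exactly \eqref{P2g}.

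Next, for \eqref{P2k} I would use that both matrices are in observable companion form with respect to the same $h$: writing $J$ for the upward shift matrix, we have $\Gamma=J-\sigma h'$ and $F=J-ah'$, hence $F=\Gamma+(\sigma-a)h'$ and $\Gamma-F=(a-\sigma)h'$. Substituting this and \eqref{P2g} into the expression $g-FPh$ from \eqref{rhok}, the $\Gamma Ph$ contributions cancel and one is left with $g-FPh=(\sigma-a)-(\sigma-a)h'Ph=(1-h'Ph)(\sigma-a)=\rho^2(\sigma-a)$. Then $k=\rho^{-1}(g-FPh)=\rho(\sigma-a)$, which is \eqref{P2k}.

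The one point needing care is the identification of the matrix $P$: in \eqref{rhok}--\eqref{Riccati} it is the (unique) symmetric stabilizing solution of the algebraic Riccati equation built from $(F,g,h)$, whereas in Theorem~\ref{thm:CEE} it is the positive semidefinite solution of the CEE with $h'Ph<1$; the formulas \eqref{P2g}, \eqref{P2k} only make sense once these are known to coincide, which is what the ``minimal solution'' clause records. I would verify this by substituting $g-FPh=\rho^2(\sigma-a)$ and $F=\Gamma+(\sigma-a)h'$ into the right-hand side of \eqref{Riccati}, expanding $FPF'=\Gamma P\Gamma'+\Gamma Ph(\sigma-a)'+(\sigma-a)h'P\Gamma'+(\sigma-a)(h'Ph)(\sigma-a)'$, and checking that after collecting terms the expression reduces to $\Gamma(P-Phh'P)\Gamma'+g(P)g(P)'$, the right-hand side of the CEE \eqref{CEE}; uniqueness of the relevant solution then follows from Theorem~\ref{thm:CEE} and the standard fact that the minimum-phase spectral factor \eqref{wreal} corresponds to the minimal (stabilizing) solution of \eqref{Riccati}. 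This algebraic reduction between the two Riccati equations is the main obstacle; the derivations of \eqref{P2g} and \eqref{P2k} themselves are just a few lines.
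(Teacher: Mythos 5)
Your derivation of \eqref{P2k} from \eqref{P2g} (via $F=\Gamma+(\sigma-a)h'$ and \eqref{rhok}) is correct, and is essentially the paper's own computation run backwards: the paper proves \eqref{P2k} first, by applying the matrix inversion lemma to the realization \eqref{wreal} of $v=\rho\sigma/a$ and identifying the resulting denominator with $\sigma(z)$, and then recovers \eqref{P2g} from $k=\rho^{-1}(g-FPh)$. The problem lies in how you obtain \eqref{P2g} in the first place. You read it off from \eqref{abrho}, but \eqref{abrho} is a statement about the solution of the CEE \eqref{CEE}, whereas Lemma~\ref{lem:P2g} is a statement about the minimal stabilizing solution of the algebraic Riccati equation \eqref{Riccati} attached to the given $\phi_+$; the whole point of Section~III is to establish that these coincide, so the identification cannot be waved through. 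Worse, the repair you propose is circular as stated: substituting $g-FPh=\rho^2(\sigma-a)$ into the right-hand side of \eqref{Riccati} presupposes \eqref{P2k} (equivalently \eqref{P2g}) for the Riccati solution, which is exactly the claim being proved. To make the argument non-circular you must run everything for the CEE solution $P_{\rm CEE}$: from \eqref{abrho} deduce $g=\Gamma P_{\rm CEE}h+\sigma-a$, hence $g-FP_{\rm CEE}h=(1-h'P_{\rm CEE}h)(\sigma-a)$; then your expansion shows that $P_{\rm CEE}$ satisfies \eqref{Riccati}; and you must additionally check that the spectral factor it induces through \eqref{rhok}, namely $\rho+h'(zI-F)^{-1}\rho(\sigma-a)=\rho\sigma(z)/a(z)$, is minimum phase ($a$ and $\sigma$ are Schur), so that the standard correspondence between ARE solutions and spectral factors identifies $P_{\rm CEE}$ with the minimal solution. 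You also silently use that the $(a,b,\rho)$ produced by Theorem~\ref{thm:CEE} for the covariance data of the given $\phi_+$ are the $(a,b,\rho)$ of $\phi_+$ itself; that requires the uniqueness assertions of Theorems~\ref{thm:covext} and \ref{thm:CEE}. None of these steps is difficult, but as written the decisive one is either circular or missing.

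Beyond the gap, note that your route is logically heavier than the paper's: it imports Theorem~\ref{thm:CEE} (quoted from the earlier literature for the covariance-extension case), whereas the paper's two-line proof uses only the known form \eqref{v} of the minimum-phase factor, the realization \eqref{wreal} with \eqref{rhok} from stochastic realization theory, and the matrix inversion lemma, and is therefore self-contained and directly valid in the setting needed later for general interpolation data. Since Section~III is structured so that Theorem~\ref{thm:CEE} is recovered as a special case of the general development built on Lemma~\ref{lem:P2g}, basing the lemma on Theorem~\ref{thm:CEE} also runs against the intended logical order, even if it is not formally circular within this paper.
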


 \begin{proof}
In the same way as in the proof of Lemma~\ref{lem:ab2g}, the matrix inversion lemma yields
\begin{displaymath}
\begin{split}
\frac{a(z)}{\sigma(z)}&= 1-h'(\rho^{-1}kh' +zI-F)^{-1}\rho^{-1}k\\
&= 1-h'[zI-J+(a-\rho^{-1}k)h']^{-1}\rho^{-1}k.
\end{split}
\end{displaymath}
and consequently
\begin{displaymath}
J -(a-\rho^{-1}k)h'=J-\sigma h'=\Gamma,
\end{displaymath}
since the denominator is $\sigma(z)$. Hence \eqref{P2k} follows. Moreover, $\Gamma =F-\rho^{-1}k$, which, together with \eqref{P2k}, yields
\begin{displaymath}
(1-h'Ph)(\sigma -a)=g -JPh +ah'Ph,
\end{displaymath}
from which \eqref{P2g} follows.
\end{proof}

In view of \eqref{gk}, 
\begin{displaymath}
P=(\Gamma + \rho^{-1}kh')P(\Gamma +\rho^{-1}kh')'+kk',
\end{displaymath}
from which it follows that
\begin{displaymath}
\begin{split}
P-\Gamma P\Gamma' &= \rho^{-2}kk' +\rho^{-1}\Gamma Phk' +\rho^{-1}kh'P\Gamma'\\
&=(\Gamma Ph+\rho^{-1}k)(\Gamma Ph+\rho^{-1}k)' -\Gamma Phh'P\Gamma' ,
\end{split}
\end{displaymath}
which in turn yields
\begin{equation}
\label{CEEprel}
P = \Gamma (P-Phh'P) \Gamma' +gg'
\end{equation}
by  \eqref{gk}. By introducing interpolation data we shall deriver the appropriate CEE from  \eqref{CEEprel}.

\subsection{CEE for general interpolation data}\label{subsec:CEEdata}

We return to the general interpolation condition \eqref{interpolation}, where now
\begin{equation}
\label{ab2f}
f(z):=\phi_+(z^{-1})= \frac12\frac{b_{*}(z)}{a_{*}(z)},
\end{equation}
$a_{*}(z):=z^n a(z^{-1})$ being the reversed polynomial, and, in view of \eqref{covexpansion},
\begin{equation}
\label{covexpansion2}
f(z)=\tfrac{1}{2}+c_{1}z+c_{2}z^2+c_{3}z^3+\cdots .
\end{equation}
Given the interpolation values, we form the $(n+1)\times (n+1)$ matrix
\begin{subequations}\label{WWj}
\begin{equation}
\label{W}
W:=\begin{bmatrix}
W_{0}&~&~\\
~&\ddots&~\\
~&~&W_{m}
\end{bmatrix},
\end{equation}
where, for $j=0,1,\dots,m $,
\begin{equation}\label{Wj}
W_{j}=\begin{bmatrix}
w_{j0}&~&~&~\\
w_{j1}&w_{j0}&~&~\\
\vdots&\ddots&\ddots&~\\
w_{jn_{j-1}}&\cdots&w_{j1}&w_{j0}
\end{bmatrix} .
\end{equation}
\end{subequations}
In the same format we also define 
\begin{equation}\label{Z}
Z:=\begin{bmatrix}
Z_{0}&~&~\\
~&\ddots&~\\
~&~&Z_{m}
\end{bmatrix},\; Z_{j}=\begin{bmatrix}
z_{j}&~&~&~\\
1&z_{j}&~&~\\
~&\ddots&\ddots&~\\
~&~&1&z_{j}
\end{bmatrix}
\end{equation}
and the $n+1$-dimensional column vector
\begin{equation}\label{e}
e:=[e_{1}^{n_{0}},e_{1}^{n_{1}},\cdots,e_{1}^{n_{m}}]',
\end{equation}
where $e_{1}^{n_{j}}=[1,0,\cdots,0]\in\mathbb{R}^{n_j}$ for each $j=0,1,\dots,m$. 
Clearly $Z$ is a stability matrix with all eigenvalues in $\mathbb{D}$, and therefore the Lyapunov equation 
\begin{equation}\label{Q}
X=ZXZ^{*}+ee^{*} ,
\end{equation}
where $Z^*$ is the  Hermitian conjugate of $Z$, has a unique solution $X$ \cite[Proposition B.1.19]{LPbook}. The following result can, for example, be found in  \cite{b16,blomqvist,b1}.

\begin{proposition}\label{LyapunovW}
There exists a (strict) Carath\'eodory function $f$ satisfying \eqref{interpolation} if and only if 
\begin{equation}
\label{Pick}
\Sigma =WX+XW^*
\end{equation}
is positive definite.
\end{proposition}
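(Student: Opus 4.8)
The plan is to identify $\Sigma$ as the confluent Pick matrix of the interpolation data and then prove the two implications separately. The key observation is structural: since each block $Z_{j}$ in \eqref{Z} is a lower-bidiagonal Jordan block with eigenvalue $z_{j}$, the holomorphic functional calculus shows that $f(Z_{j})$ is exactly the lower-triangular Toeplitz matrix whose entries are $f^{(k)}(z_{j})/k!$; comparing with \eqref{Wj} this says precisely that $W=f(Z)$ block by block. Iterating \eqref{Q} gives $X=\sum_{\ell\ge0}Z^{\ell}ee^{*}(Z^{*})^{\ell}$, and computing its blocks shows that $X$ is the confluent matrix (the matrix of Taylor coefficients at the nodes) of the Szeg\H{o} kernel $(1-z\bar w)^{-1}$; hence $\Sigma=f(Z)X+Xf(Z)^{*}$ is the confluent matrix of the Carath\'eodory kernel $K_{f}(z,w)=\frac{f(z)+\overline{f(w)}}{1-z\bar w}$, and the statement becomes the classical solvability criterion for the (confluent) Nevanlinna--Pick problem in the Carath\'eodory class.

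\emph{Necessity.} Let $f$ be Carath\'eodory, with Herglotz representation $f(z)=ic+\int_{\mathbb{T}}\frac{\zeta+z}{\zeta-z}\,d\mu(\zeta)$, where $c\in\mathbb{R}$ and $\mu\ge0$. Since the spectrum of $Z$ lies in $\mathbb{D}$, the resolvent $(\zeta I-Z)^{-1}$ exists for every $\zeta\in\mathbb{T}$ and $f(Z)=icI+\int_{\mathbb{T}}(\zeta I+Z)(\zeta I-Z)^{-1}\,d\mu(\zeta)$. Because $X=X^{*}$, the constant term cancels in $f(Z)X+Xf(Z)^{*}$; using $(\zeta I+Z)(\zeta I-Z)^{-1}=2\zeta(\zeta I-Z)^{-1}-I$, the Stein identity $X-ZXZ^{*}=ee^{*}$ from \eqref{Q}, and $|\zeta|=1$, one checks the pointwise identity
\[
(\zeta I+Z)(\zeta I-Z)^{-1}X+X\bigl[(\zeta I+Z)(\zeta I-Z)^{-1}\bigr]^{*}=2\bigl[(\zeta I-Z)^{-1}e\bigr]\bigl[(\zeta I-Z)^{-1}e\bigr]^{*},
\]
whence $\Sigma=2\int_{\mathbb{T}}\bigl[(\zeta I-Z)^{-1}e\bigr]\bigl[(\zeta I-Z)^{-1}e\bigr]^{*}\,d\mu(\zeta)\succeq0$. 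If moreover $f$ is strict, i.e.\ $\operatorname{Re}f\ge\varepsilon>0$ on $\overline{\mathbb{D}}$, then $f=\varepsilon+g$ with $g$ Carath\'eodory, so $\mu$ dominates $\varepsilon$ times normalized arclength measure $dm$ on $\mathbb{T}$; since $\int_{\mathbb{T}}\bigl[(\zeta I-Z)^{-1}e\bigr]\bigl[(\zeta I-Z)^{-1}e\bigr]^{*}\,dm(\zeta)=X$ and $X\succ0$ because the nodes $z_{0},\dots,z_{m}$ are distinct (so $(Z,e)$ is controllable), we conclude $\Sigma\succeq2\varepsilon X\succ0$.

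\emph{Sufficiency --- the main obstacle.} Assuming $\Sigma\succ0$, I would produce an interpolant. The shortest route is to pass to the Schur class via the Cayley transform $s=(1-f)/(1+f)$, which carries \eqref{interpolation} into a confluent Nevanlinna--Pick problem for contractive analytic functions and carries $\Sigma$ into the corresponding Schur--Pick matrix; the claim then reduces to the classical fact that that problem is solvable exactly when its Pick matrix is positive semidefinite, with a nondegenerate family of strictly contractive solutions when it is positive definite. More constructively (and closer to what is needed later in the paper), one assembles from $(Z,e,W)$ and $\Sigma^{-1}$ a rational $2\times2$ matrix function $\Theta$ in realization form, verifies that it is $J$-inner for $J=\operatorname{diag}(1,-1)$ using the Stein identity for $X$ and equation \eqref{Pick} for $\Sigma$, and checks that the linear-fractional map $q\mapsto(\theta_{11}q+\theta_{12})(\theta_{21}q+\theta_{22})^{-1}$ sends every Carath\'eodory $q$ to a Carath\'eodory solution of \eqref{interpolation}; taking $q$ a suitable constant then yields a rational solution of degree at most $n$, necessarily strict because $\Sigma$ is nonsingular. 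The delicate point --- and the step I expect to cost the most work --- is showing that this $\Theta$ actually enforces the interpolation conditions: this is a pole/zero-cancellation argument that must respect the nontrivial Jordan structure of the blocks $Z_{j}$ (the confluent, as opposed to the simple-node, case), and one must also thread through the normalization $z_{0}=0$, $f(0)=\tfrac12$ imposed at the outset.
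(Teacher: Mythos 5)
The paper does not actually prove this proposition; it is stated with a pointer to the literature (\cite{b16,blomqvist,b1}), so your attempt should be judged as a self-contained proof. Your necessity half is correct and complete: the identification $W=f(Z)$ via the functional calculus on the Jordan blocks \eqref{Z}--\eqref{Wj}, the Herglotz representation, the pointwise identity obtained from the Stein equation \eqref{Q} (which indeed reduces, after multiplying by $\zeta I-Z$ and $\bar\zeta I-Z^*$ and using $|\zeta|=1$, to $2(X-ZXZ^*)=2ee^*$), and the observation $\int_{\mathbb{T}}(\zeta I-Z)^{-1}ee^*(\zeta I-Z)^{-*}\,dm=X\succ0$ by reachability of $(Z,e)$ together give $\Sigma\succeq 2\varepsilon X\succ0$ for a strict interpolant. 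This is a clean argument and in spirit the same computation that underlies the cited references.

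The sufficiency half, however, is not a proof as written. Route (1) is an appeal to the classical solvability criterion for the confluent Nevanlinna--Pick problem in the Schur/Carath\'eodory class --- which is legitimate and is in effect exactly what the paper does by citing \cite{b16,blomqvist,b1}, but then you should say so and cite it rather than present it as a derivation. Route (2), the $J$-inner $\Theta$ construction, leaves precisely the decisive step open (that the linear-fractional map built from $(Z,e,W,\Sigma^{-1})$ enforces the confluent interpolation conditions), and you acknowledge this; until that verification is carried out the ``only if $\Rightarrow$ if'' direction has a genuine gap. One further remark: the proposition asserts only the existence of a strict Carath\'eodory interpolant --- no rationality or degree bound is claimed here (that is the content of Theorem~\ref{CEEthm} via the CEE) --- so your closing concern about extracting a rational solution of degree at most $n$ and threading the normalization $z_0=0$, $f(0)=\tfrac12$ is not needed for this statement; for sufficiency it is enough to produce any strict solution, which the classical criterion already supplies once $\Sigma\succ0$.
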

The matrix $\Sigma$ is called the {\em generalized Pick matrix}.

In view of \eqref{covexpansion2},
\begin{equation}
\label{varphi(Z)}
f(Z)=\frac{1}{2}I+c_{1}Z+c_{2}Z^{2}+c_{3}Z^{3}\cdots=W
\end{equation}
\cite{Higham}, which together with $b_{*}(Z)=2f(Z)a_{*}(Z)$ yields 
\begin{equation*}
b_{*}(Z)e=2Wa_{*}(Z)e .
\end{equation*}
Therefore
$$V\begin{bmatrix}1\\b\end{bmatrix}=2WV\begin{bmatrix}1\\a\end{bmatrix}$$
where the matrix
\begin{equation}
\label{Vmatrix}
V:=[e,Ze,Z^{2}e,\cdots,Z^{n}e]
\end{equation}
is nonsingular by reachability. Therefore, by Lemma~\ref{lem:ab2g},
\begin{equation}\label{a2g}
\begin{bmatrix}0\\g\end{bmatrix}=(V^{-1}WV-\tfrac{1}{2}I)\begin{bmatrix}1\\a\end{bmatrix}=V^{-1}(W-\tfrac{1}{2}I)V\begin{bmatrix}1\\a\end{bmatrix},
\end{equation}
or equivalently
\begin{equation}
\label{a+g}
(W+\tfrac{1}{2}I)V\begin{bmatrix}0\\g\end{bmatrix}=(W-\tfrac{1}{2}I)V\begin{bmatrix}1\\a+g\end{bmatrix}.
\end{equation}
Since $(W+\tfrac{1}{2}I)$ is nonsingular, it follows from Lemma~\ref{lem:P2g} that 
\begin{equation}
\label{g2Psigma}
\begin{bmatrix}0\\g\end{bmatrix}=V^{-1}TV\begin{bmatrix}1\\\Gamma Ph+\sigma\end{bmatrix},
\end{equation}
where
\begin{equation}
\label{D}
T:=(W+\tfrac{1}{2}I)^{-1}(W-\tfrac{1}{2}I).
\end{equation}
Now defining the $n$-vector $u$ and the  $n\times n$-matrix $U$ via
\begin{equation}
\label{uU}
\begin{bmatrix}u&U\end{bmatrix}:=\begin{bmatrix}0&I_{n}\end{bmatrix}V^{-1}TV,
\end{equation}
where $I_n$ denotes the $n\times n$ identity matrix to distinguish it from the $(n+1)\times (n+1)$ identity matrix $I$, \eqref{g2Psigma} yields
\begin{equation} \label{g1}
g=u+U\sigma+U\Gamma Ph  ,
\end{equation}
which inserted into \eqref{CEEprel} yields precisely the Covariance Extension Equation
\begin{equation}\label{newCEE}
\begin{split}
P=&\Gamma(P-Phh'P)\Gamma'\\
&+(u+U\sigma+U\Gamma Ph)(u+U\sigma+U\Gamma Ph)' ,
\end{split}
\end{equation}
but now with $(u,U)$ exchanged for \eqref{uU}. Furthermore, by \eqref{gk}, \eqref{ab2g}, \eqref{Pk2g} and \eqref{rho},
\begin{equation}\label{P2ab}
\begin{split}
a&=(I-U)(\Gamma Ph+\sigma)-u\\
b&=(I+U)(\Gamma Ph+\sigma)+u\\
\rho&=\sqrt{1-h'Ph}
\end{split}
\end{equation}
in analogy with \eqref{abrho}.

\subsection{Main theorems}

Let the first column in \eqref{Wj} be denoted $(w_{j0}, w_j')'$ and form the $n$-vector 
\begin{equation}
\label{wdefn}
w=(w_0',w_{10}, w_1',w_{20}, w_2',\dots,w_{m0}, w_m')' ,
\end{equation}
where $w_{00}=\tfrac12$ has been removed since it is a constant and not a variable, and let 
$\mathcal{W}_+$ be the space of all $w$ such that $\Sigma$ in \eqref{Pick} is positive definite. Moreover, let $\mathcal{S}_{n}$ be the space of Schur polynomial of the form \eqref{sigma}.

The proof of the following proposition will be deferred to the appendix.

\begin{proposition}\label{w2uUprop}
There is map $u=\omega(w)$ sending $w$ to $u$, which is a diffeomorphism. Moreover, there is a linear  map $L$ such that $U=Lu$.
\end{proposition}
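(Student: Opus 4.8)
The plan is to analyze the composite map $w\mapsto u$ defined by \eqref{uU}, namely $[u\;\;U]=[0\;\;I_n]V^{-1}TV$ with $T=(W+\tfrac12 I)^{-1}(W-\tfrac12 I)$, and to exhibit it as a composition of maps each of which is either linear or a diffeomorphism. First I would observe that $W$ depends affinely (indeed linearly, once the fixed entry $w_{00}=\tfrac12$ is accounted for) on the variable vector $w$ of \eqref{wdefn}: each $W_j$ is lower triangular Toeplitz with the first column $(w_{j0},w_j')'$, so $w\mapsto W$ is an injective linear embedding into block-diagonal lower-triangular Toeplitz matrices. Since $Z$ and hence $V$ are fixed data (not depending on $w$), and $W+\tfrac12 I$ is nonsingular on $\mathcal{W}_+$ (indeed on a neighborhood, since its eigenvalues are $w_{j0}+\tfrac12$ with positive real part by the Pick condition forcing $\operatorname{Re} w_{j0}>0$), the map $W\mapsto T=(W+\tfrac12 I)^{-1}(W-\tfrac12 I)$ is a Möbius-type matrix transformation, real-analytic on its domain, with real-analytic inverse $T\mapsto W=\tfrac12(I+T)(I-T)^{-1}$; one checks $I-T=(W+\tfrac12 I)^{-1}$ is invertible. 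Composing, $w\mapsto T\mapsto V^{-1}TV\mapsto u$ is real-analytic; the last arrow just reads off the bottom $n$ entries of the first column block, a linear projection.

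The substance is therefore to show this analytic map $w\mapsto u$ has an analytic inverse, i.e. that $u$ also determines $w$. For this I would run the construction backwards. From $u$ one recovers $U$ by the claimed linear relation $U=Lu$ — so the second assertion must actually be proved first, or simultaneously. The point is that $[u\;\;U]$ is not an arbitrary $n\times(n+1)$ matrix: it is the bottom $n$ rows of $V^{-1}TV$, and the \emph{top} row together with the algebraic structure of $T$ (which is a rational function of the single lower-triangular-Toeplitz-per-block matrix $W$, hence commutes with $Z$ in the appropriate sense) forces $V^{-1}TV$ to have a Toeplitz-like / shift-structured form. Concretely, because $Z$ is (block) companion-like and $W$ is built from powers of the corresponding shift, $V^{-1}WV$ and hence $V^{-1}TV$ is lower triangular Toeplitz in each block — the same structure as the matrix $U$ of \eqref{Uu} augmented by the column $u$. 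Spelling out that structure shows every entry of $U$ is one of the entries of $u$ (up to the fixed leading $1$'s being absent), which is exactly a linear map $U=Lu$; and it shows the top row of $V^{-1}TV$ is $[1,u']$ up to a known constant, so $V^{-1}TV$ is completely determined by $u$.

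Given that, invertibility is immediate: from $u$ reconstruct $V^{-1}TV$, conjugate back to get $T=V(V^{-1}TV)V^{-1}$, then $W=\tfrac12(I+T)(I-T)^{-1}$, then read off $w$ from the first columns of the blocks of $W$. Each step is rational/analytic with analytic inverse on the relevant open set, so $\omega$ is a diffeomorphism (in fact real-analytic) from $\mathcal{W}_+$ onto its image. I would close by noting that the image is open, since $\mathcal{W}_+$ is open and $\omega$ is a local diffeomorphism by the chain of nonsingular differentials.

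The main obstacle I anticipate is making precise the structural claim that $V^{-1}TV$ inherits a block lower-triangular Toeplitz form and that this form is governed solely by $u$. This is where the specific construction of $V=[e,Ze,\dots,Z^n e]$ and the block structure of $Z$ with Jordan-type blocks $Z_j$ enter essentially: one needs that conjugation by $V$ intertwines multiplication/rational-function-of-$W$ with a shift-structured representation, i.e. $V^{-1}WV$ has the same $(m+1)$-block, lower-triangular-Toeplitz pattern. In the pure covariance case ($m=0$, $Z_0$ a single Jordan block at $0$) this is the classical fact behind \eqref{expansion}–\eqref{Uu}; for general $Z$ it requires checking that $T$, being a scalar rational function applied blockwise to $W$, preserves the commutant of $Z$. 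Everything else — the Möbius inversion $T\leftrightarrow W$, the linear embedding $w\hookrightarrow W$, the projection onto $u$ — is routine linear algebra once that structural lemma is in hand.
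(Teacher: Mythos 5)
Your overall architecture (recover $T$ from $u$, then $W=(I-T)^{-1}-\tfrac12 I$, then $w$) can be made to work, but the structural lemma you hang everything on is false, and this is a genuine gap. You claim that $V^{-1}TV$ (equivalently $V^{-1}WV$) is block lower-triangular Toeplitz, ``the same structure as the matrix $U$ of \eqref{Uu} augmented by the column $u$'', so that every entry of $U$ is an entry of $u$ and the top row of $V^{-1}TV$ is $[1,u']$. That holds only in the pure covariance case $m=0$, $z_0=0$, where $V=I$ and $Z$ is the shift. For general nodes it fails: $V^{-1}ZV$ is a companion matrix $\hat Z$ whose last column carries the characteristic polynomial of $Z$, and $V^{-1}TV$, which commutes with $\hat Z$, is a polynomial in $\hat Z$ --- in general neither Toeplitz nor conformally block structured. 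Already for $n=1$, $z_0=0$, $z_1=a\neq 0$ one gets $Z=\operatorname{diag}(0,a)$, $T=\operatorname{diag}(0,t)$ with $t=(w_{10}-\tfrac12)(w_{10}+\tfrac12)^{-1}$, and $V^{-1}TV=\left(\begin{smallmatrix}0&0\\ t/a & t\end{smallmatrix}\right)$, so $u=t/a$ and $U=au$: the entry of $U$ is not an entry of $u$, and in the pattern \eqref{Uu} the $1\times1$ matrix $U$ would have to be zero. Moreover the top row of $V^{-1}TV$ is always zero, not $[1,u']$, since the first row of $V^{-1}$ is $e_1'$ and the first row of $T$ vanishes ($d_{00}=0$ because $w_{00}=\tfrac12$). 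Since both your linearity claim $U=Lu$ and your reconstruction of $T$ from $u$ rest on this Toeplitz form, the proof as written does not go through.

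What is true, and what would rescue your plan, is the commutant statement you only gesture at in the last paragraph: $W$, hence $T$, commutes with $Z$, and $Z$ is non-derogatory (the $z_j$ are distinct and each occurs in a single Jordan block $Z_j$), so $T=q(Z)$ for a unique polynomial $q$ of degree at most $n$; since $V^{-1}\begin{bmatrix}e&Ze&\cdots&Z^ne\end{bmatrix}=I$, the first column of $V^{-1}TV$ is precisely the coefficient vector of $q$, whence $q_0=0$, $q_k=u_k$ and $T=u_1Z+\cdots+u_nZ^n$. From this, $\begin{bmatrix}u&U\end{bmatrix}=\begin{bmatrix}0&I_n\end{bmatrix}V^{-1}TV$ is linear in $u$, and $W=(I-T)^{-1}-\tfrac12 I$ recovers $w$ smoothly from $u$, giving the diffeomorphism. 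Note that this corrected route is still genuinely different from the paper's, which never conjugates by $V$: there one uses that $T$ itself is block diagonal with lower-triangular Toeplitz blocks $D_j$ (which \emph{is} true, being a M\"obius function of each $W_j$), writes $u=Md$ with $d$ the vector of block entries and $M$ the nonsingular trailing $n\times n$ submatrix of $V^{-1}$, and exhibits explicit mutually inverse smooth formulas between $w_j$ and $d_j$ blockwise, from which $U=Lu$ also drops out by linearity of $T$ in $d$.
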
 

The Covariance Extension Equation \eqref{newCEE} can be written
\begin{equation}
\label{LyapunovCEE}
P=\Gamma P\Gamma' + R(p), 
\end{equation}
where R(p) is a function of the first column $p:=Ph$ in the matrix variable $P$. Hence once $p$ has been determined, $P$ can be solved from the Lyapunov equation \eqref{LyapunovCEE}, since $\Gamma$ is a stability matrix. Consequently, CEE contains $n$ independent variables, the same number as the real dimension of $w$. 

Note that \eqref{newCEE} can reformulated as 
\begin{displaymath}
P=(\Gamma+\sigma h')P(\Gamma+\sigma h')'-(\Gamma Ph+\sigma)(\Gamma Ph+\sigma)' +gg' +\rho^2\sigma\sigma' ,
\end{displaymath}
where $g$ is given by \eqref{g1}. Since $g=\tfrac12(b-a)$ and $\Gamma Ph+\sigma=\tfrac12(a+b)$, this can rewritten as 
\begin{equation}
\label{ab2P}
P-JPJ'= -\frac12(ab' +ba')+\rho^2\sigma\sigma',
\end{equation}
where $a$ and $b$ are given by \eqref{P2ab}.

Let $\mathcal{P}_n$ be the $2n$-dimensional  space of pairs $(a,b)\in\mathcal{S}_{n}\times\mathcal{S}_{n}$ such that $f=b/a$ is a Carath\'eodory function. Moreover, for each $\sigma\in\mathcal{S}_{n}$, let 
$\mathcal{P}_n(\sigma)$ be the submanifold of $\mathcal{P}_n$ for which \eqref{absigma} holds. It was shown in \cite{b8} that  $\{\mathcal{P}_n(\sigma)\mid \sigma\in\mathcal{S}_{n}\}$ is a {\em foliation\/} of $\mathcal{P}_n$, i.e., a family of smooth nonintersecting submanifolds, called {\em leaves}, which together cover  $\mathcal{P}_n$. 

\begin{theorem}\label{difthm1}
Let $\sigma\in\mathcal{S}_n$. Then for each $w\in\mathcal{W}_+$ there is a unique $(a,b)\in\mathcal{P}_n(\sigma)$ such that \eqref{ab2f} satisfies the interpolation conditions \eqref{interpolation} and the positivity condition \eqref{absigma}. In fact, the map sending $(a,b)\in\mathcal{P}_n(\sigma)$ to $w\in\mathcal{W}_+$ is a diffeomorphism.
\end{theorem}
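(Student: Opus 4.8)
The plan is to establish the diffeomorphism by chaining together the maps already assembled in the preceding subsections and then invoking Theorem~\ref{thm:CEE} (via the identity \eqref{CEEprel}) together with Proposition~\ref{w2uUprop}. Concretely, I would factor the claimed map $(a,b)\in\mathcal{P}_n(\sigma)\to w\in\mathcal{W}_+$ as the composition of three bijections: first the map $(a,b)\mapsto g=\tfrac12(b-a)$ restricted to the leaf $\mathcal{P}_n(\sigma)$; then $g\mapsto P$, where $P$ is the (unique, minimal, positive semidefinite) solution of \eqref{CEEprel} with $h'Ph<1$; and finally $P\mapsto u\mapsto w$ using \eqref{g1} to recover $u$ from $g$ and $P$, and then the inverse $w=\omega^{-1}(u)$ from Proposition~\ref{w2uUprop}. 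The point is that on the leaf $\mathcal{P}_n(\sigma)$ the pair $(a,b)$ is pinned down by $\sigma$ and the single vector $g$ via \eqref{P2ab} and \eqref{ab2P}, so all of these maps are between manifolds of the same dimension $n$, and each is smooth with smooth inverse.

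The key steps, in order, are: (i) show that $\mathcal{P}_n(\sigma)$ is parametrized diffeomorphically by $p=Ph$ — this is where \eqref{LyapunovCEE} enters, since given $p$ the full matrix $P$ is recovered by solving the Lyapunov equation $P=\Gamma P\Gamma'+R(p)$ (solvable because $\Gamma$ is a stability matrix), and conversely \eqref{ab2P} together with \eqref{P2ab} shows $(a,b)$ determines $P$ hence $p$; (ii) invoke Theorem~\ref{thm:CEE}, reinterpreted with the data-dependent $(u,U)$ of \eqref{uU}, to get that for each admissible data there is a unique such $P$, and that $\deg f=\rank P$ so that $f=b_*/a_*$ genuinely satisfies the interpolation conditions \eqref{interpolation} (this uses \eqref{varphi(Z)} and the derivation \eqref{a2g}–\eqref{g1} backwards); (iii) use Proposition~\ref{w2uUprop} to transfer everything from the $(u,U)$ coordinates back to the interpolation data $w\in\mathcal{W}_+$, noting $U=Lu$ so the whole dependence is through $u=\omega(w)$; (iv) assemble the Jacobian: since each link in the chain is a diffeomorphism onto its image and all spaces have real dimension $n$, the composition is a diffeomorphism, and surjectivity onto $\mathcal{W}_+$ follows from Proposition~\ref{LyapunovW} (every $w\in\mathcal{W}_+$ corresponds to a genuine Carath\'eodory interpolant, which by the degree-constraint theory lands in some leaf, and by uniqueness in the prescribed leaf $\mathcal{P}_n(\sigma)$).

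The main obstacle I expect is step (i)–(ii): proving that the map $g\mapsto P$ (equivalently, the solvability and uniqueness of CEE in the form \eqref{CEEprel} adapted to the new $(u,U)$) is not merely a bijection but a \emph{smooth} bijection with smooth inverse, i.e., that the solution $P$ depends smoothly on the data and that the derivative of the CEE map is nonsingular everywhere on the relevant set. The original Theorem~\ref{thm:CEE} from \cite{BLpartial} is stated for genuine covariance data; here $(u,U)$ comes from \eqref{uU} with a general interpolation matrix $W$, so one must check that the hypotheses (in particular, the positive-definiteness that played the role of $T>0$) are inherited from $\Sigma>0$, i.e., from $w\in\mathcal{W}_+$ — this is precisely the content that Proposition~\ref{w2uUprop} and Proposition~\ref{LyapunovW} are designed to supply. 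A secondary technical point is verifying that the foliation structure from \cite{b8} is compatible with this parametrization, so that "unique $(a,b)$ in the leaf" is well-posed; but given that \cite{b8} already establishes $\{\mathcal{P}_n(\sigma)\}$ is a foliation, this reduces to checking that the leaf-coordinate $\sigma$ and the transverse coordinate (carried by $g$, or by $w$) are independent, which is immediate from the construction. Once smoothness and nonsingularity of the Jacobian are in hand, the diffeomorphism claim follows by the inverse function theorem together with the global bijectivity just described.
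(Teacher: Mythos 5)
There is a genuine gap, and it is one of circularity. Your step (ii) asks to ``invoke Theorem~\ref{thm:CEE}, reinterpreted with the data-dependent $(u,U)$ of \eqref{uU}'' to obtain existence and uniqueness of the solution $P$ of \eqref{CEEprel} for general interpolation data. But Theorem~\ref{thm:CEE} (from \cite{BLpartial}) is proved only for positive covariance sequences, i.e., for the Toeplitz data \eqref{uUconstr}; its extension to the $(u,U)$ built from a general multiple-point matrix $W$ is exactly the content of Theorem~\ref{CEEthm}, and in the paper Theorem~\ref{CEEthm} is \emph{deduced from} Theorem~\ref{difthm1} (via the Lyapunov equation \eqref{ab2P}). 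So your chain $(a,b)\mapsto g\mapsto P\mapsto u\mapsto w$ presupposes the very existence-and-uniqueness statement that Theorem~\ref{difthm1} is supposed to supply. The auxiliary facts you lean on do not fill this hole: Proposition~\ref{w2uUprop} is only the coordinate change $w\leftrightarrow(u,U)$, and Proposition~\ref{LyapunovW} only guarantees that \emph{some} Carath\'eodory interpolant exists when $\Sigma>0$ in \eqref{Pick} --- it says nothing about the existence (let alone uniqueness) of an interpolant of degree at most $n$ whose spectral-zero polynomial is the prescribed $\sigma$, which is the hard surjectivity statement in your step (iv). ``Lands in some leaf, and by uniqueness in the prescribed leaf'' is not an argument: nothing you have set up forces the leaf to be $\mathcal{P}_n(\sigma)$ for the given $\sigma$.

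The paper avoids this by importing the missing deep ingredient from outside the CEE machinery: it writes $f$ through the Herglotz representation, recasts \eqref{interpolation} as the generalized moment problem \eqref{momentcond} with the explicit kernels $\alpha_{jk}$, and then invokes \cite[Theorem 3.4]{KLR}, which asserts precisely that for a fixed prior (here fixed $\sigma$) the map from $aa^*$ to the moment data $w$ is a diffeomorphism. The smooth bijection $aa^*\leftrightarrow a$ from \cite[Section III]{BLGuM} and the linear determination of $b$ from $(\sigma,a)$ via \eqref{absigma} (with $\rho^2$ a normalization) then finish the proof. If you want to salvage your route, you must replace step (ii) by an independent proof of solvability and uniqueness of \eqref{CEEprel} with the general $(u,U)$ --- e.g., by degree theory or by the optimization arguments underlying \cite{b1,KLR} --- rather than citing Theorem~\ref{thm:CEE}, whose hypotheses ($T>0$ for a Toeplitz matrix) simply do not cover the present data.
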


\begin{proof}
The Carath{\'e}odory function $f$ can be written
\begin{displaymath}
f(z)=\int_{-\pi}^\pi \frac{e^{i\theta}+z}{e^{i\theta}-z}\,\text{Re}\{\varphi(e^{i\theta})\}\frac{d\theta}{2\pi},
\end{displaymath}
where $(e^{i\theta}+z)(e^{i\theta}-z)^{-1}$ is a Herglotz kernel.
Hence the interpolation problem can be formulated as the generalized moment problem to find the Carath{\'e}odory function \eqref{ab2f} satisfying the moment conditions
\begin{equation}
\label{momentcond}
\int_{-\pi}^\pi \alpha_{jk}(e^{i\theta})\,\text{Re}\{\varphi(e^{i\theta})\}\frac{d\theta}{2\pi}=w_{jk},
\end{equation}
where
\begin{align*}
    \alpha_{j0}(z)&=\frac{z+z_j}{z-z_j}\quad j=0,1,\dots,m   \\
    \alpha_{jk}(z)&=\frac{2z}{(z-z_j)^{k+1}}\quad j=0,\dots,m, \, k=1,\dots, n_{j-1}  
\end{align*}
(see, e.g., \cite{BLkimura}). Then, by \cite[Theorem 3.4]{KLR}, there is a diffeomorphic map sending  $aa^*$ to $w$. However there is a smooth bijection between $aa^*$ and $a$, see, e.g., \cite[Section III]{BLGuM}. Given $\sigma$ and $a$, $b$ is uniquely determined via the linear relation  \eqref{absigma}. 
Note that $\rho^2$ is just the appropriate normalizing scalar factor once $(a,\sigma)$ has been chosen.
\end{proof}

\begin{theorem}\label{CEEthm}
For each $(\sigma,w)\in\mathcal{S}_n\times\mathcal{W}_+$, the Covariance Extension Equation  \eqref{newCEE} has a unique positive semidefinite solution $P$ with the property $h'Ph<1$, and \eqref{P2ab} is the corresponding unique solution of the analytic interpolation problem to find a rational Carath\'eodory function \eqref{ab2f} of degree at most $n$ satisfying the interpolation conditions \eqref{interpolation}. Moreover,
\begin{equation}
\label{rankP=degf}
\deg f =\rank\, P.
\end{equation}
\end{theorem}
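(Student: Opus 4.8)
The plan is to reduce Theorem~\ref{CEEthm} to the already-established Theorems~\ref{thm:CEE}, \ref{difthm1} and the realization lemmas, by tracing through the construction that produced the CEE with the new parameters $(u,U)$ from \eqref{uU}. The starting observation is that, by Proposition~\ref{w2uUprop}, the data $w\in\mathcal{W}_+$ is in diffeomorphic correspondence with $u$ (and $U=Lu$ is determined by $u$), so it suffices to solve \eqref{newCEE} for a fixed admissible $(u,U)$.

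First I would establish existence and uniqueness of $P$ with $h'Ph<1$. For this I would run the argument of Section~\ref{subsec:CEEdata} in reverse: given $\sigma\in\mathcal{S}_n$ and $w\in\mathcal{W}_+$, Theorem~\ref{difthm1} furnishes a unique $(a,b)\in\mathcal{P}_n(\sigma)$ solving the interpolation problem, hence a unique positive real $\phi_+$ of degree at most $n$ via \eqref{ab2f}. Passing to the minimum-phase spectral factor $v$ of \eqref{spectralfactor} and forming its observable realization \eqref{wreal}, stochastic realization theory (as quoted around \eqref{rhok}--\eqref{Riccati}) gives a unique symmetric stabilizing $P\geq 0$ with $\rho=\sqrt{1-h'Ph}>0$, i.e.\ $h'Ph<1$. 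Lemmas~\ref{lem:ab2g} and \ref{lem:P2g} then show this $P$ satisfies \eqref{CEEprel}, and the identity \eqref{g1}, which is exactly where the new $(u,U)$ enter, converts \eqref{CEEprel} into \eqref{newCEE}. Conversely, any solution $P$ of \eqref{newCEE} with $h'Ph<1$ yields via \eqref{P2ab} a pair $(a,b)$; one checks using \eqref{ab2P} that \eqref{absigma} holds (so $(a,b)\in\mathcal{P}_n(\sigma)$) and, reversing \eqref{a2g}--\eqref{g2Psigma} using nonsingularity of $V$ and of $W\pm\tfrac12 I$, that the resulting $f$ meets the interpolation conditions; uniqueness in Theorem~\ref{difthm1} then forces $P$ to coincide with the one just constructed. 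This simultaneously proves that \eqref{P2ab} is the unique interpolant.

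For the rank statement \eqref{rankP=degf} I would invoke the structure already present in Theorem~\ref{thm:CEE}: equation \eqref{deg=rank} gives $\deg v=\deg\phi_+=\rank P$ in the pure covariance case, and the derivation here shows $P$ is literally the stabilizing solution of the same algebraic Riccati equation \eqref{Riccati} attached to the realization $(F,g,h)$ of $\phi_+$. Since $\deg f=\deg\phi_+$ equals the McMillan degree of that realization after cancellations, and the stabilizing solution of \eqref{Riccati} has rank equal to the dimension of the reachable-and-observable part (a standard fact from \cite[Chapter 6]{LPbook}), we get $\rank P=\deg f$. Here $h$ is fixed in observable form, so observability is automatic and only the reachable subspace, of dimension $\deg\phi_+$, matters.

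The main obstacle I anticipate is not any single hard estimate but the bookkeeping needed to show the reverse direction is genuinely well-defined: one must verify that for \emph{every} solution $P\geq 0$ of \eqref{newCEE} with $h'Ph<1$ the polynomials $a,b$ from \eqref{P2ab} are Schur (not merely monic), so that $(a,b)$ actually lands in $\mathcal{P}_n(\sigma)$ rather than on its boundary. I would handle this by observing that $h'Ph<1$ makes $\rho$ real and positive and that the Riccati structure \eqref{CEEprel}, together with stability of $\Gamma$, forces $F=J-ah'$ to be a stability matrix — the same Schur/stability propagation used in \cite{BLpartial} for Theorem~\ref{thm:CEE} — after which Theorem~\ref{difthm1} closes the loop.
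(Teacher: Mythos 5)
Your proposal is correct and follows essentially the same route as the paper: the paper's own (very short) proof likewise reduces everything to Theorem~\ref{difthm1} for the unique pair $(a,b)\in\mathcal{P}_n(\sigma)$ and then obtains the unique $P$ with $h'Ph<1$ from the equivalence of \eqref{newCEE} with the Lyapunov-type relation \eqref{ab2P} (stability of $J$), with $h'Ph<1$ coming from $\rho^2>0$. Your additional detail --- existence via the stabilizing solution of the Riccati equation \eqref{Riccati}, the explicit converse through \eqref{a2g}--\eqref{g2Psigma}, the Schur-property verification, and the rank argument --- simply fills in steps the paper delegates to Section~\ref{sec:generalscalar} and to \cite{BLpartial}, so no substantive divergence or gap is present.
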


\begin{proof} 
For each $(\sigma,w)\in\mathcal{S}_n\times\mathcal{W}_+$, by Theorem~\ref{difthm1}, there is a unique $(a,b)\in\mathcal{P}_n(\sigma)$, which means there is a unique rational positive real function $\phi_+(z)$ given by \eqref{ab2phi+}.
By the construction in Section~\ref{subsec:matrixrealization}, the algebraic Riccati equation \eqref{Riccati} has a unique minimal solution $P\geq 0$ satisfying \eqref{h'Ph<1}. By tranforming \eqref{Riccati} to \eqref{CEEprel} and inserting \eqref{g1}, there is a unique positive semidefinite solution to \eqref{newCEE} satisfying \eqref{h'Ph<1}. Relation \eqref{rankP=degf} follows from \eqref{degree=rank}.
\end{proof}

Finally we observe as in \cite{BFL} that $P$ can be eliminated from \eqref{ab2P} by  multiplying by $z^{j-i}$ and summing over all $i,j=1,2,\dots,n$, leading to an equation in merely the independent vector variable $p$. In fact, we recover \eqref{absigma}, which in matrix form can be written
\begin{equation}
\label{redequ}
S(a)\begin{bmatrix}1\\b\end{bmatrix}=2(1-h'p)\begin{bmatrix}s\\\sigma_n\end{bmatrix} ,
\end{equation}
where
\begin{equation*}
S(a)=
\begin{bmatrix}
1&\cdots&a_{n-1}&a_{n}\\
a_{1}&\cdots&a_{n}\\
\vdots&\ddots\\
a_{n}
\end{bmatrix}+\begin{bmatrix}
1&a_{1}&\cdots&a_{n}\\
~&1&\cdots&a_{n-1}\\
~&~&\ddots&\vdots\\
~&~&~&1
\end{bmatrix}
\end{equation*}
and
\begin{equation*}
s=\begin{bmatrix}
1+\sigma_{1}^{2}+\sigma_{2}^{2}+\cdots+\sigma_{n}^{2}\\
\sigma_{1}+\sigma_{1}\sigma_{2}+\cdots+\sigma_{n-1}\sigma_{n}\\
\vdots\\
\sigma_{n-1}+\sigma_{1}\sigma_{n}
\end{bmatrix},
\end{equation*}
where $a$ and $b$ are functions of $p$ via \eqref{P2ab}.
However, among the  $n+1$ equations \eqref{redequ}, the last is redundant \cite{BFL} and can be removed. Then we are left with $n$ equations
\begin{equation}\label{reducedCEE}
\begin{bmatrix}I_n&0\end{bmatrix}S(a)\begin{bmatrix}1\\b\end{bmatrix}=2(1-h'p)s
\end{equation}
in $n$ variables $p_1,p_2,\dots, p_n$.

\subsection{Back to rational covariance extension}

Next we show how the results presented in subsection~\ref{sec:CEE} follow from Theorem~\ref{CEEthm}. 
With $m=0$, $z_0=0$ and $w$ given by \eqref{wc} we have 
\begin{displaymath}
W=\begin{bmatrix}
\tfrac12&~&~&~\\
c_1&\tfrac12&~&~\\
\vdots&\ddots&\ddots&~\\
c_n&\cdots&c_1&\tfrac12
\end{bmatrix}
\end{displaymath}
and 
\begin{displaymath}
Z=\begin{bmatrix}
0&~&~&~\\
1&0&~&~\\
~&\ddots&\ddots&~\\
~&~&1&0\end{bmatrix},\quad
e=\begin{bmatrix}
1\\
0\\
\vdots\\
0
\end{bmatrix}.
\end{displaymath}
Since therefore $V=I$ and
\begin{displaymath}
D=\begin{bmatrix}1& \\c&C\end{bmatrix}^{-1}\begin{bmatrix}0& \\c&C-I_n\end{bmatrix}
=\begin{bmatrix}0&0 \\C^{-1}c&I_n-C^{-1}\end{bmatrix},
\end{displaymath}
where 
\begin{displaymath}
C=\begin{bmatrix}1&&&\\c_1&1&&\\\vdots&\vdots&\ddots&\\c_{n-1}&c_{n-2}&\cdots&1\end{bmatrix}, \quad 
c= \begin{bmatrix}c_1\\c_2\\\ \vdots\\c_n\end{bmatrix} ,
\end{displaymath} 
\eqref{uU} yields
\begin{equation}
\label{Cc}
u=C^{-1}c, \quad U=I_n-C^{-1}.
\end{equation}
To see that \eqref{Cc} is equivalent to  \eqref{uUconstr}, we identify negative powers of $z$ in 
\begin{displaymath}
(1+c_1z^{-1}+\dots +c_n z^{-n})(1-u_1z^{-1}-u_2z^{-2}-\dots)=1,
\end{displaymath}
 to obtain
\begin{displaymath}
c_k=u_k+\sum_{j=1}^{k-1}c_{k-j}u_j, \quad k=1,2,\dots,n,
\end{displaymath}
which is equivalent to 
\begin{equation}
\label{C2U}
Cu=c, \qquad C(I_n-U)=I_n .
\end{equation}

\subsection{An algorithm for solving CEE}\label{scalarhomotopy}

We shall use a homotopy continuation method to solve CEE, i.e., determine the unique positive semidefinite $P$ with the property $h'Ph<1$ that satisfies \eqref{newCEE} (Theorem~\ref{CEEthm}). For $u=0$, CEE takes the form
\begin{equation}
\label{CCEu=0}
P = \Gamma (P-Phh'P) \Gamma',
\end{equation}
which has the unique solution $P=0$. We would like to make a continuous deformation of $u$ to go between the solutions of \eqref{newCEE} and \eqref{CCEu=0}. To this end, we choose
\begin{equation}
\label{deformation}
u(\lambda)=\lambda u, \quad \lambda\in [0,1]. 
\end{equation}
Then $U(\lambda)=\lambda U$ (Proposition~\ref{w2uUprop}). Define $w(\lambda):=\omega^{-1}(\lambda u)$ in terms of the diffeomorphism in Proposition~\ref{w2uUprop} for all  $\lambda\in [0,1]$. It follows  from  \eqref{D} that $W=(I-T)^{-1}-\tfrac12 I$, and therefore the corresponding deformation is 
\begin{displaymath}
W(\lambda)=(I-\lambda T)^{-1}-\tfrac12 I . 
\end{displaymath}
We want to show that $W(\lambda)$ remains in $\mathcal{W}_+$ along the trajectory, i.e., that 
$W(\lambda)$ satisfies $\Sigma >0$ in \eqref{Pick} for all  $\lambda\in [0,1]$. To this end, a straightforward calculation yields 
\begin{align*}
\label{lambdaPick}
    \Sigma(\lambda)&:=W(\lambda)X+XW(\lambda)^*    \notag \\
    & =(I-\lambda T)^{-1}(X-\lambda^2 TXT^*)(I-\lambda T^*)^{-1} .
\end{align*}
However, $X-\lambda^2 TXT^* \geq X- TXT^* >0$ for all  $\lambda\in [0,1]$, and consequently $\Sigma(\lambda)>0$ as claimed.

To solve the reduced CEE in terms of $p=(p_1,p_2,\dots,p_n)$ we use the homotopy
\begin{equation}
\label{ }
\begin{split}
H(p,\lambda):= &\begin{bmatrix}I_n&0\end{bmatrix}S(a(p,\lambda))\begin{bmatrix}1\\b(p,\lambda)\end{bmatrix}\\
&-2(1-h'p)s =0
\end{split}
\end{equation}
where
\begin{subequations}\label{p2ab}
\begin{equation}
a(p,\lambda)=(I-\lambda U)(\Gamma p+\sigma)-\lambda u
\end{equation}
\begin{equation}
b(p,\lambda) =(I+\lambda U)(\Gamma p+\sigma)+\lambda u ,
\end{equation}
\end{subequations}
which also has a unique solution  $p(\lambda)$ for all $\lambda\in [0,1]$.

By the implicit function theorem we have the differential equation
\begin{equation}
\label{diffequ}
\frac{dp}{d\lambda}=\left[\frac{\partial H(p,\lambda)}{\partial p}\right]^{-1}\frac{\partial H(p,\lambda)}{\partial\lambda}, \quad p(0)=0,
\end{equation}
where
\begin{align*}
 \frac{\partial H(p,\lambda)}{\partial\lambda}  
   & =\begin{bmatrix}I_n&0\end{bmatrix}(S(a(p,\lambda))-S(b(p,\lambda)))\begin{bmatrix}0\\g(p,1)\end{bmatrix} \\
\frac{\partial H(p,\lambda)}{\partial p}  
&=\begin{bmatrix}I_n&0\end{bmatrix}(S(a(p,\lambda))+S(b(p,\lambda)))\begin{bmatrix}0\\\Gamma\end{bmatrix}+2hs'\\
&+\begin{bmatrix}I_n&0\end{bmatrix}(S(a(p,\lambda))-S(b(p,\lambda)))\begin{bmatrix}0\\ \lambda U\Gamma\end{bmatrix}
\end{align*}
and
\begin{equation}\label{g(p)} 
g(p,\lambda)= u(\lambda) +U(\lambda)\sigma + U(\lambda)\Gamma p. 
\end{equation}
The differential equation \eqref{diffequ} has a unique solution $p(\lambda)$ on the interval $\lambda\in[0,1]$, so by solving  
the Lyapunov equation
\begin{equation}
\begin{split}
&P-\Gamma P\Gamma'=-\Gamma p(1)p(1)'\Gamma'+\\
&\qquad (u+U\sigma+U\Gamma p(1))(u+U\sigma+U\Gamma p(1))' ,
\end{split}
\end{equation}
we obtain the unique solution of \eqref{newCEE} \cite[Proposition B.1.19]{LPbook}. To solve the differential equation \eqref{diffequ} we use  predictor-corrector steps \cite{AllgowerGeorg}. 

\section{Multivariable analytic interpolation}\label{sec:multivariable}

Next we consider the multivariable version of the problem stated in Section~\ref{sec:intro}. More precisely, let $F$ be an $\ell\times \ell$ matrix-valued real rational function, analytic in the unit disc $\mathbb{D}$, which satisfies the interpolation condition 
\begin{align}
\label{multinterpolation}
 \frac{1}{k!}F^{(k)}(z_{j})=W_{jk},\quad& j=0,1,\cdots,m,   \\
    &   k=0,\cdots n_{j}-1 ,\notag
\end{align}
and the positivity condition 
\begin{equation}
\label{F+F*}
F(e^{i\theta}) + F(e^{-i\theta})' > 0, \quad -\pi\leq \theta\leq\pi .
\end{equation}
We restrict the complexity of the rational function $F(z)$ by requiring that its McMillan degree be at most $\ell n$, where
\begin{equation}
\label{deg(f)}
n=\sum_{j=0}^{m}n_j -1 .
\end{equation}
Without loss of generality we may assume that $ z_{0}=0$ and $W_{0}=\frac{1}{2}I$.
Then $F(z)$  has a realization 
 \begin{equation}
\label{ }
F(z)=\tfrac12 I + zH(I-zF)^{-1}G,
\end{equation}
where $H\in\mathbb{R}^{\ell\times\ell n}$, $F\in\mathbb{R}^{\ell n\times\ell n}$, $G\in\mathbb{R}^{\ell n\times\ell}$, $(H,F)$ is an observable pair, and the matrix $F$ has all its eigenvalues in $\mathbb{D}$. 

In analogy with the construction in subsection~\ref{subsec:CEEdata} we form the $\ell(n+1)\times\ell(n+1)$ matrix
\begin{equation}
\label{W}
W:=\begin{bmatrix}
W_{0}&~&~\\
~&\ddots&~\\
~&~&W_{m}
\end{bmatrix}
\end{equation}
with
\begin{equation}\label{Wj2}
W_{j}=\begin{bmatrix}
W_{j0}&~&~&~\\
W_{j1}&W_{j0}&~&~\\
\vdots&\ddots&\ddots&~\\
W_{jn_{j-1}}&\cdots&W_{j1}&W_{j0}
\end{bmatrix}
\in \mathbb{C}^{\ell  n_j \times \ell  n_j}
\end{equation}
for each $j=0,1,\dots,m$.
Let $X$ be the unique solution of the Lyapunov equation \eqref{Q}. The inverse problem to determine the interpolant $F(z)$ has a solution if and only if the Pick-type condition 
\begin{equation}
\label{Pickmatrix}
W(X\otimes I_\ell) + (X\otimes I_\ell)W^* > 0 ,
\end{equation} 
is satisfied, where $\otimes$ denotes Kronecker product.

\subsection{Multivariable stochastic realization theory}\label{subsec:realization}

Following the pattern in subsection~\ref{subsec:realization} we define 
\begin{equation}
\label{Phi+}
\Phi_+(z):=F(z^{-1})=\tfrac12 I + H(zI-F)^{-1}G,
\end{equation}
which is  (strictly) positive real. Moreover, define the minumum-phase spectral factor $V(z)$ satisfying
\begin{equation}
\label{VPhi+}
V(z)V(z^{-1})'=\Phi(z) := \Phi_+(z) + \Phi_+(z^{-1})' ,
\end{equation}
which then has a realization of the form
\begin{equation}
\label{V(z)}
V(z)=H(zI-F)^{-1}K + R
\end{equation}
\cite[Chapter 6]{LPbook}. Now, by the usual coordinate transformation $(H,F,G)\to(HT^{-1},TFT^{-1},TG)$ we can choose $(H,F)$ in the observer canonical form
\begin{displaymath}
H=\text{diag}(h_{t_1},h_{t_2},\dots,h_{t_\ell}) \in \mathbb{R}^{\ell\times n\ell}
\end{displaymath}
with $h_\nu:=(1,0,\dots,0)\in\mathbb{R}^\nu$, and
\begin{equation}
\label{F}
F=J-AH \in\mathbb{R}^{n\ell\times n\ell}
\end{equation}
where $J:=\text{diag}(J_{t_1},J_{t_2},\dots, J_{t_\ell})$ with $J_\nu$ the $\nu\times\nu$ shift matrix
\begin{displaymath}
J_\nu =\begin{bmatrix}0&1&0&\dots&0\\0&0&1&\dots&0\\\vdots&\vdots&\vdots&\ddots&0\\
0&0&0&\dots&1\\0&0&0&\dots&0\end{bmatrix}
\end{displaymath}
and $A\in\mathbb{R}^{n\ell\times \ell}$.
The numbers $t_1,t_2,\dots,t_\ell$ are the {\em observability indices\/} of $\Phi_+(z)$, and 
\begin{equation}
\label{tsum}
t_1+t_2+\dots+t_\ell=n\ell.
\end{equation}
Moreover, define 
\begin{equation}
\label{PI}
\Pi(z):=\text{diag}(\pi_{t_1}(z),\pi_{t_2}(z),\dots,\pi_{t_\ell}(z)),
\end{equation}
where $\pi_\nu(z)=(z^{\nu-1},\dots,z,1)$ ,
\begin{equation}
\label{D(z)}
D(z):=\text{diag}(z^{t_1},z^{t_2},\dots, z^{t_\ell}).
\end{equation}
and 
\begin{equation}
\label{A(z)}
A(z)=D(z) +\Pi(z)A.
\end{equation}

\begin{lemma}
The rational matrix functions \eqref{Phi+} and \eqref{V(z)} have the matrix fraction representations
\begin{subequations} \label{AinvB}
\begin{equation}
 \Phi_+(z) =\tfrac12 A(z)^{-1}B(z) ,
\end{equation}   
where
\begin{equation}
\label{B(z)}
B(z)=D(z) +\Pi(z)B\quad \text{with $B=A+2G$}
\end{equation}
\end{subequations}
and
\begin{subequations} \label{AinvSigma}
\begin{equation}
V(z)  =A(z)^{-1}\Sigma(z)R,
\end{equation} 
where
\begin{equation}
\label{Sigma(z)}
\Sigma(z)=D(z)+\Pi(z)\Sigma \quad \text{with $\Sigma = A+KR^{-1}$}.
\end{equation}
\end{subequations}
\end{lemma}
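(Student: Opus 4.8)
The plan is to reduce both matrix fractions to a single resolvent identity and then read them off by substitution. The identity to establish first is $H(zI-F)^{-1}=A(z)^{-1}\Pi(z)$. Multiplying on the left by $A(z)$ and on the right by $(zI-F)$, it is equivalent to the polynomial identity $\Pi(z)(zI-F)=A(z)H$. Writing $A(z)H=D(z)H+\Pi(z)AH$ and, using $F=J-AH$, $\Pi(z)(zI-F)=z\Pi(z)-\Pi(z)J+\Pi(z)AH$, the terms $\Pi(z)AH$ cancel and what remains is the \emph{shift identity} $z\Pi(z)-\Pi(z)J=D(z)H$.

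Since $H$, $J$, $\Pi(z)$, $D(z)$ are block diagonal with the same block sizes $t_1,\dots,t_\ell$, this decouples into one scalar-row identity per block: for each $\nu$,
\begin{equation*}
z\,\pi_\nu(z)-\pi_\nu(z)J_\nu=z^{\nu}h_\nu ,
\end{equation*}
with $\pi_\nu(z)=(z^{\nu-1},\dots,z,1)$, $h_\nu=(1,0,\dots,0)$, and $J_\nu$ the $\nu\times\nu$ upward shift. This holds because $z\,\pi_\nu(z)=(z^{\nu},z^{\nu-1},\dots,z)$ while $\pi_\nu(z)J_\nu=(0,z^{\nu-1},\dots,z)$ is just $\pi_\nu(z)$ shifted one place to the right, so the difference is $(z^{\nu},0,\dots,0)=z^{\nu}h_\nu$. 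This single-block computation is the only genuine calculation in the proof; everything else is substitution, and the scalar case of Section~\ref{subsec:matrixrealization} (where $F=J-ah'$) is the template.

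Granting the resolvent identity, the first representation follows from \eqref{Phi+}:
\begin{align*}
\Phi_+(z)&=\tfrac12 I+A(z)^{-1}\Pi(z)G\\
&=A(z)^{-1}\big(\tfrac12 A(z)+\Pi(z)G\big)\\
&=\tfrac12 A(z)^{-1}\big(D(z)+\Pi(z)(A+2G)\big),
\end{align*}
where $A(z)=D(z)+\Pi(z)A$ was used; with $B:=A+2G$ and $B(z):=D(z)+\Pi(z)B$ this is $\Phi_+(z)=\tfrac12 A(z)^{-1}B(z)$. Inserting the resolvent identity into \eqref{V(z)} and using that $R$ is nonsingular (a standard consequence of $\Phi(e^{i\theta})>0$; see \cite[Chapter 6]{LPbook}),
\begin{align*}
V(z)&=R+A(z)^{-1}\Pi(z)K\\
&=A(z)^{-1}\big((D(z)+\Pi(z)A)R+\Pi(z)K\big)\\
&=A(z)^{-1}\big(D(z)+\Pi(z)(A+KR^{-1})\big)R,
\end{align*}
so with $\Sigma:=A+KR^{-1}$ and $\Sigma(z):=D(z)+\Pi(z)\Sigma$ we get $V(z)=A(z)^{-1}\Sigma(z)R$.

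Finally, to confirm these are genuine matrix-fraction descriptions I would check that $A(z)$ is nonsingular wherever $zI-F$ is, via $\det A(z)=\det(zI-F)$. The same single-block computation gives $H(zI-J)^{-1}=D(z)^{-1}\Pi(z)$; factoring $zI-F=(zI-J)\big(I+(zI-J)^{-1}AH\big)$ and applying Sylvester's determinant identity,
\begin{align*}
\det(zI-F)&=\det(zI-J)\,\det\!\big(I_\ell+D(z)^{-1}\Pi(z)A\big)\\
&=z^{n\ell}\det D(z)^{-1}\det A(z)=\det A(z),
\end{align*}
since $\det(zI-J)=\det D(z)=z^{n\ell}$. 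I do not expect a real obstacle here: the observer canonical form was chosen precisely so that these identities are forced, and the only thing to watch is the bookkeeping of block sizes and the row- versus column-vector conventions.
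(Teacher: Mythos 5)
Your proposal is correct and follows essentially the same route as the paper: it rests on the identity $\Pi(z)(zI-J)=D(z)H$, from which $\Pi(z)(zI-F)=A(z)H$ and hence $H(zI-F)^{-1}=A(z)^{-1}\Pi(z)$, after which both matrix-fraction forms follow by direct substitution into \eqref{Phi+} and \eqref{V(z)}. The extra details you supply (the blockwise verification of the shift identity and the check $\det A(z)=\det(zI-F)$) are correct but not part of the paper's shorter argument.
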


\begin{proof}
Since $\Pi(z)(zI-J)=D(z)H$,
\begin{displaymath}
\Pi(z)(zI-F)=\Pi(z)(zI-J)+\Pi(z)AH=A(z)H,
\end{displaymath}
and hence 
\begin{equation}
\label{Ainvlem}
H(zI-F)^{-1}=A(z)^{-1}\Pi(z).
\end{equation}
Then \eqref{AinvB} and \eqref{AinvSigma} follow from \eqref{Phi+} and \eqref{V(z)}, respectively.
\end{proof}

It follows from stochastic realization theory \cite[Chapter 6]{LPbook} that 
\begin{subequations}\label{KR}
\begin{align}
  K  & =(G-FPH')(R')^{-1}  \label{K}\\
  RR'  &  = I-HPH' \label{R}
\end{align}
\end{subequations}
where $P$ is the minimal symmetric solution of the algebraic Riccati equation
\begin{equation}
\label{Riccati2}
P=FPF' + (G-FPH')(I-HPH')^{-1}(G-FPH')'.
\end{equation}
Then, from \eqref{F} and \eqref{KR} we have
\begin{align*}
   G &= JPH'  -AHPH' +KR^{-1}(I-HPH') \\
      & = \Gamma PH' +KR^{-1},
\end{align*}
where, by \eqref{Sigma(z)},
\begin{equation}
\label{Gamma}
\Gamma=J-\Sigma H,
\end{equation}
and consequently
\begin{equation}
\label{G}
G=\Gamma PH'+\Sigma -A.
\end{equation}

We are now in a position to derive the multivariable version of \eqref{CEEprel}, namely
\begin{equation}
\label{AREmod}
P=\Gamma (P-PH'HP)\Gamma' +GG' .
\end{equation}
In fact, noting that $F=\Gamma +KR^{-1}H$ and $G-\Gamma PH'=KR^{-1}$,  we see that \eqref{Riccati2} can be written
\begin{align*}
   P & = (\Gamma +KR^{-1}H)P(\Gamma +KR^{-1}H)' +KK'\\
    &  = \Gamma P\Gamma' +\Gamma PH'(KR^{-1})' +KR^{-1}HP\Gamma' \\
    & \phantom{xxxxxxxxxxxxxxxxxxxx} +KR^{-1}(KR^{-1})' ,
\end{align*}
where we have also used \eqref{R}. Then inserting $KR^{-1}=G-\Gamma PH'$ we obtain \eqref{AREmod}.

\subsection{The multivariable Covariance Extension  Equation}\label{sec:multiCEE}

Next we introduce the interpolation condition \eqref{multinterpolation}.

\begin{lemma}
The interpolation condition \eqref{multinterpolation} can be written
\begin{equation}
\label{multinterpolation2}
F(Z\otimes I_\ell)=W,
\end{equation}
where the matrices $W$ and $Z$ are given by \eqref{W} and \eqref{Z}, respectively. 
\end{lemma}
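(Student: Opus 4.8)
The plan is to read off \eqref{multinterpolation2} directly from the calculus of matrix‑valued functions of a matrix argument, mirroring the scalar identity \eqref{varphi(Z)}. Since $F$ is analytic in $\mathbb{D}$ and every eigenvalue of $Z\otimes I_\ell$ lies in $\{z_0,\dots,z_m\}\subset\mathbb{D}$, the matrix $F(Z\otimes I_\ell)$ is well defined; concretely, writing the Taylor expansion $F(z)=\tfrac12 I_\ell+\sum_{k\ge1}c_k z^k$, which converges on all of $\mathbb{D}$ and hence on a disc of radius exceeding the spectral radius $\max_j|z_j|$ of $Z$, one has
\[
F(Z\otimes I_\ell)=\tfrac12 I_{\ell(n+1)}+\sum_{k\ge1}Z^k\otimes c_k ,
\]
with the matrix argument occupying the \emph{first} Kronecker factor, so that the block layout matches that of $W$ in \eqref{W}–\eqref{Wj2}.

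First I would reduce to a single interpolation node. Because $Z=\operatorname{diag}(Z_0,\dots,Z_m)$, we have $Z\otimes I_\ell=\operatorname{diag}(Z_0\otimes I_\ell,\dots,Z_m\otimes I_\ell)$, and since powers of a block‑diagonal matrix are block‑diagonal, the series above gives $F(Z\otimes I_\ell)=\operatorname{diag}\big(F(Z_0\otimes I_\ell),\dots,F(Z_m\otimes I_\ell)\big)$. Comparing with \eqref{W}, it suffices to show $F(Z_j\otimes I_\ell)=W_j$ for each $j$, where the equivalence with \eqref{multinterpolation} will then be visible by inspection.

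Next I would use the Jordan structure of a block: $Z_j=z_jI_{n_j}+N_{n_j}$ with $N_{n_j}$ the nilpotent lower‑shift matrix, $N_{n_j}^{\,n_j}=0$, so $Z_j\otimes I_\ell=z_jI_{n_j\ell}+(N_{n_j}\otimes I_\ell)$ and $(N_{n_j}\otimes I_\ell)^{n_j}=0$. Expanding $Z_j^k=(z_jI_{n_j}+N_{n_j})^k$ by the binomial theorem (the two summands commute), swapping the order of summation, and collecting the coefficient of $N_{n_j}^{\,r}$ yields
\[
F(Z_j\otimes I_\ell)=\sum_{r=0}^{n_j-1}N_{n_j}^{\,r}\otimes\Big(\tfrac12\delta_{r0}I_\ell+\sum_{k\ge \max(1,r)}\tbinom{k}{r}z_j^{\,k-r}c_k\Big)=\sum_{r=0}^{n_j-1}N_{n_j}^{\,r}\otimes \tfrac1{r!}F^{(r)}(z_j),
\]
the last equality because termwise $r$‑fold differentiation of the Taylor series gives $F^{(r)}(z)=r!\sum_{k\ge r}\binom{k}{r}c_k z^{k-r}$. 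Since $N_{n_j}^{\,r}$ carries identity blocks precisely on its $r$‑th block‑subdiagonal, the right‑hand side is exactly the lower block‑triangular Toeplitz matrix $W_j$ of \eqref{Wj2} under the identification $W_{jr}=\tfrac1{r!}F^{(r)}(z_j)$; that is, $F(Z_j\otimes I_\ell)=W_j$ if and only if the interpolation conditions \eqref{multinterpolation} hold at $z_j$. Reassembling the blocks gives the asserted equivalence between \eqref{multinterpolation} and \eqref{multinterpolation2}.

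This argument is essentially bookkeeping, so I do not expect a genuine obstacle. The only point that needs care is keeping the order of the Kronecker factors consistent with the $(n+1)\times(n+1)$‑outer / $\ell\times\ell$‑inner block structure of $W$ (and of the Pick‑type matrix \eqref{Pickmatrix}), together with the standard fact — see \cite{Higham} for the matrix‑function calculus invoked above — that a function analytic at the eigenvalues of a Jordan block $z_jI+N$ evaluates to the Toeplitz matrix built from its Taylor coefficients at $z_j$.
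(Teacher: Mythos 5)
Your proposal is correct and follows essentially the same route as the paper: expand $F$ in its power series on $\mathbb{D}$, evaluate at the block-diagonal matrix $Z\otimes I_\ell$ blockwise, and recognize each block $F(Z_j\otimes I_\ell)$ as the lower block-triangular Toeplitz matrix $W_j$ built from $\tfrac{1}{k!}F^{(k)}(z_j)$. The paper compresses the Jordan-block/binomial computation into a ``straight-forward calculation,'' which you have simply written out in full, with the Kronecker ordering handled consistently.
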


\begin{proof}
Since $F(z)$ is analytic in $\mathbb{D}$, it has a representation 
\begin{displaymath}
F(z)=\sum_{k=0}^\infty C_k z^k
\end{displaymath}
there, where $C_0=\frac12 I_\ell$. A straight-forward calculation yields
\begin{displaymath}
F(Z_j\otimes I_\ell)= \sum_{k=0}^\infty (Z_j)^k\otimes C_k =W_j,
\end{displaymath}
where $W_j$ is given by \eqref{Wj2}. 
Then \eqref{multinterpolation2} follows from \eqref{Z} and \eqref{W}.
\end{proof}

Analogously to the situation in subsection~\ref{subsec:CEEdata},  \eqref{AinvB} provides us with the representation
\begin{subequations}\label{AB2F}
\begin{equation}
F(z)=\tfrac{1}{2}A_*(z)^{-1}B_*(z)
\end{equation}
in terms of the reversed matrix polynomials
\begin{align}
A_*(z)&=D(z)A(z^{-1})=I_\ell +D(z)\Pi(z^{-1})A \label{Astar}\\
B_*(z)&=D(z)B(z^{-1})=I_\ell +D(z)\Pi(z^{-1})B \label{Bstar} ,
\end{align}
\end{subequations}
where $D(z)$ is given by \eqref{D(z)}. Then  the interpolation condition  \eqref{multinterpolation2} takes the form
\begin{equation}
\label{multinterpolation3}
2A_*(Z\otimes I_\ell)W=B_*(Z\otimes I_\ell).
\end{equation}
In view of \eqref{A(z)} and \eqref{B(z)} we have the polynomial representations
\begin{subequations}
\begin{align}
A_*(z)&=I_\ell + A_1z+A_2z^2+\dots+A_tz^t \label{A*}\\
B_*(z)&=I_\ell + B_1z+B_2z^2+\dots+B_tz^t \label{B*} ,
\end{align}
\end{subequations}
where $t$ is the largest observability index. Introducing $Q:=A+G$, it follows from \eqref{B(z)} that $A=Q-G$ and $B=Q+G$,
so the interpolation condition \eqref{multinterpolation3} can be written'
\begin{equation}
\label{multinterpolation4}
G_*(Z\otimes I_\ell)=Q_*(Z\otimes I_\ell)T,
\end{equation}
where
\begin{align}
G_*(z)&= G_1z+G_2z^2+\dots+G_tz^t \label{A*}\\
Q_*(z)&=I_\ell + Q_1z+Q_2z^2+\dots+Q_tz^t \label{B*} 
\end{align}
and
\begin{equation}
\label{T}
T:=(W-\tfrac{1}{2}I)(W+\tfrac{1}{2}I)^{-1}
=\begin{bmatrix}
T_{0}&~&~\\
~&\ddots&~\\
~&~&T_{m}
\end{bmatrix},
\end{equation}
where
\begin{equation}\label{Tj}
T_{j}=\begin{bmatrix}
T_{j0}&~&~&~\\
T_{j1}&T_{j0}&~&~\\
\vdots&\ddots&\ddots&~\\
T_{jn_{j-1}}&\cdots&T_{j1}&T_{j0}
\end{bmatrix}
\end{equation}
for $j=0,1,\dots,m $.
Now, \eqref{multinterpolation4} yields 
\begin{align*}
    &Z\otimes  G_1 +Z^2\otimes G_2 +  \dots + Z^t\otimes G_t   \\
    &= (I_{\ell(n+1)}+ Z\otimes  Q_1 +Z^2\otimes Q_2 +  \dots + Z^t\otimes Q_t)T.
\end{align*}
Multiplying both sides from the right by $(e\otimes I_{\ell})$ and observing that, in view of the rule
\begin{equation}
\label{rule}
(A\otimes B)(C\otimes D)=(AC)\otimes (BD),
\end{equation}
which holds for arbitrary matrices of appropriate dimensions,
\begin{displaymath}
(Z^k\otimes G_k)(e\otimes I_\ell)=(Z^k e)\otimes  G_k= (Z^k e\otimes I_\ell)G_k,
\end{displaymath}
we have
\begin{equation}
\label{multinterpolation6}
V \begin{bmatrix} G_1\\  \vdots \\G_t \end{bmatrix}=\hat{T}+(Z\otimes  Q_1 +Z^2\otimes Q_2 +  \dots + Z^t\otimes Q_t)\hat{T},
\end{equation}
where $V$ is the $\ell(n+1)\times\ell t$ matrix
\begin{equation}
\label{V}
V:=\begin{bmatrix}Ze\otimes I_{\ell}&\cdots&(Z^{t}e)\otimes I_{\ell}\end{bmatrix} 
\end{equation}
and $\hat{T}$ is the $\ell(n+1)\times\ell$ matrix
\begin{equation}
\label{That}
\hat{T} :=T(e\otimes I_{\ell}).
\end{equation}
Here
\begin{equation}
\label{That2}
\hat{T}=\begin{bmatrix}
\hat{T}_{0}\\\hat{T}_{1}\\\vdots\\\hat{T}_{m}
\end{bmatrix},\quad\text{where}\;
\hat{T}_{j}=
\begin{bmatrix}
T_{j0}\\
T_{j1}\\
\vdots\\
T_{jn_{j}-1}
\end{bmatrix} .
\end{equation}

Next let $N_1, N_2, \dots, N_t$ be the $\ell\times\ell n$ matrices defined by 
\begin{equation}
\label{N}
D(z)\Pi(z^{-1})=N_1z+N_2z^2+\dots +N_tz^t.
\end{equation}
Then $A_j=N_j A$, $B_j=N_j B$, $G_j=N_j G$ and $Q_j=N_j Q$ for $j=1,2,\dots,t$, and therefore \eqref{multinterpolation6} takes the form 
\begin{equation}
\label{multinterpolation7}
VNG=\hat{T}+(Z\otimes  N_1Q + \dots + Z^t\otimes N_tQ)\hat{T},
\end{equation}
where
\begin{equation}\label{Nmatrix}
N=\begin{bmatrix}N_{1}\\\vdots\\N_{t}\end{bmatrix}\in\mathbb{R}^{\ell t\times \ell n}, 
\; N_{k}=\begin{bmatrix}e_{t_1}^{k}\\~&e_{t_2}^{k}\\~&~&\ddots\\~&~&~&e_{t_\ell}^{k}\end{bmatrix}
	\end{equation}
Here $e_{j}^{k} $ is a $1\times j$ row vector with the k:th element  being 1 and the others 0 whenever $k\leq j$, and a zero row vector of dimension $1\times j$ when $k>j$. Now, $VN$ is an $\ell(n+1)\times\ell n$ matrix in which the top $\ell$ rows are zero, since $z_0=0$, i.e., it takes the form
\begin{equation}
\label{VN}
VN=\begin{bmatrix} 0_{\ell\times\ell n}\\L\end{bmatrix}.
\end{equation}
To derive the multivariable CEE we would like to solve \eqref{multinterpolation7} for $G$ and insert it in \eqref{AREmod}. This would be possible if the square matrix $L$ is nonsingular, in which case  $VN$ would have a psuedo-inverse $(VN)^\dagger$.

\begin{lemma}\label{VNlem}
The $\ell n\times \ell n$ matrix $L$ defined by \eqref{VN} is nonsingular if and only if all observability indices are the same, i.e., $t_1=t_2=\dots=t_\ell =n$.
\end{lemma}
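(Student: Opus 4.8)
The plan is to compute the matrix $VN$ explicitly, recognize it as block diagonal after a permutation, and then simply read off its rank.

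First I would unwind the column structure of $N$. By \eqref{Nmatrix} (equivalently, from the generating identity \eqref{N}), every column of $N$ is a canonical unit vector of $\mathbb{R}^{\ell t}$: the column occupying the $j$-th slot of the $\nu$-th diagonal block is supported only in the block $N_j$ (and this requires $j\le t_\nu$, which always holds since that block has exactly $t_\nu$ columns). Consequently, writing the $j$-th block-column of $V$ in \eqref{V} as $Z^{j}e\otimes I_\ell$, the corresponding column of $VN$ equals $Z^{j}e\otimes\epsilon_\nu$, where $\epsilon_\nu$ denotes the $\nu$-th column of $I_\ell$. Hence, after the permutation of the $\ell(n+1)$ rows that interchanges the two Kronecker factors (the perfect-shuffle permutation), $VN$ becomes block diagonal with $\ell$ blocks
\[
M_{t_\nu}:=\begin{bmatrix}Ze&Z^{2}e&\cdots&Z^{t_\nu}e\end{bmatrix}\in\mathbb{R}^{(n+1)\times t_\nu},\qquad \nu=1,\dots,\ell .
\]

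Next I would isolate $L$ and bound its rank. Since $z_0=0$, the leading block $Z_0$ of $Z$ in \eqref{Z} is strictly lower triangular, so the first entry of $Z^{k}e$ vanishes for every $k\ge1$; thus the first row of each $M_{t_\nu}$ is zero, which is precisely why the top $\ell$ rows of $VN$ vanish in \eqref{VN}. Deleting those zero rows, $L$ is, up to the same permutation, block diagonal with blocks $\hat M_{t_\nu}\in\mathbb{R}^{n\times t_\nu}$, each $\hat M_{t_\nu}$ being $M_{t_\nu}$ with its zero first row removed, so that $\rank L=\sum_{\nu=1}^{\ell}\rank\hat M_{t_\nu}\le\sum_{\nu=1}^{\ell}\min(n,t_\nu)$. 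From this the equivalence follows. If the observability indices are not all equal, then, since $\sum_\nu t_\nu=n\ell$ by \eqref{tsum} and the $t_\nu$ are positive integers, some $t_\nu>n$; then $\rank\hat M_{t_\nu}\le n<t_\nu$, so the bound gives $\rank L<\sum_\nu t_\nu=n\ell$ and $L$ is singular. Conversely, if $t_1=\cdots=t_\ell=n$, then $L$ is (up to permutation) the $\ell$-fold block-diagonal repetition of the single $n\times n$ matrix $\hat M_n$, so it suffices to show $\hat M_n$ is nonsingular. For this, recall that $(Z,e)$ is reachable, so the matrix $V_0:=\begin{bmatrix}e&Ze&\cdots&Z^{n}e\end{bmatrix}$ of \eqref{Vmatrix} is nonsingular; its first row is $\begin{bmatrix}1&0&\cdots&0\end{bmatrix}$ because the first entry of $e$ is $1$ while those of $Z^{k}e$, $k\ge1$, vanish, and expanding $\det V_0$ along that row gives $\det V_0=\det\hat M_n$. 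Hence $\hat M_n$, and with it $L$, is nonsingular.

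The only genuinely delicate step is the first one: correctly matching the interleaved Kronecker indexing of $V$ with the block-selector structure of $N$ so that the block-diagonalization of $VN$ is transparent. Once that is in place, the rank inequality and the reduction of the ``if'' direction to the already-established nonsingularity of the reachability matrix \eqref{Vmatrix} are routine; the one point worth recording in the ``only if'' direction is that ``not all $t_\nu$ equal'' forces some $t_\nu>n$, which is immediate from \eqref{tsum} and positivity of the indices.
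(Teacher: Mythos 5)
Your proof is correct, and while it reaches the same conclusion by exploiting the same two structural facts as the paper (reachability of $(Z,e)$ and the vanishing of the top rows caused by $z_0=0$), the technical route is genuinely different. The paper orders the indices, takes $t:=t_1$, and argues in two cases: for $t=n$ it writes $V=[Ze\;\cdots\;Z^ne]\otimes I_\ell$, invokes the Kronecker rank formula together with \eqref{rank=n}, and then applies Sylvester's rank inequality to conclude $\rank VN=\ell n$; for $t>n$ it observes that the first $t$ columns of $VN$ equal $[Ze\;\cdots\;Z^te]\otimes (e^1_\ell)'$, which has rank $n<t$, forcing a column dependence. You instead make the structure of $VN$ completely explicit: each column of $N$ is a unit vector, so after a perfect-shuffle row permutation $VN$ is block diagonal with blocks $[Ze\;\cdots\;Z^{t_\nu}e]$, and $L$ inherits this block structure with the zero first row of each block deleted. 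Your ``only if'' direction is then essentially the paper's dependence argument localized to an oversized block (some $t_\nu>n$ by \eqref{tsum}), while your ``if'' direction replaces Sylvester's inequality by a cofactor expansion of the nonsingular reachability matrix \eqref{Vmatrix} along its first row, giving $\det[e\;Ze\;\cdots\;Z^ne]=\det\hat M_n$. This buys two things: a general formula $\rank L=\sum_\nu\rank\hat M_{t_\nu}$ valid for any index pattern, and a self-contained verification of the rank-$n$ fact that the paper asserts in \eqref{rank=n} directly from reachability (which for $t=n$ is the only mildly non-obvious case, since $Z$ is singular). The paper's argument is shorter; yours is more transparent about where the singularity comes from when the indices differ.
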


\begin{proof}
Ordering the the observability indices as $$t_1\geq t_2\geq \dots\geq t_\ell$$ and setting $t:=t_1$, we have $t\geq n$ by \eqref{tsum}. Since $(Z,e)$ is a reachable pair, 
\begin{equation}
\label{rank=n}
\text{rank\,} \begin{bmatrix}Ze&Z^{2}e&\cdots&Z^t e\end{bmatrix} =n.
\end{equation}

First assume that $t=n$. Then, since $\text{rank}(A\otimes B)=\text{rank}(A)\text{rank}(B)$, 
\begin{displaymath}
V=\begin{bmatrix}Ze&Z^{2}e&\cdots&Z^n e\end{bmatrix}\otimes I_\ell \in\mathbb{C}^{\ell n\times \ell n}
\end{displaymath}
has rank $\ell n$, and so does $N\in\mathbb{R}^{n\ell\times n\ell}$.  Therefore Sylverster's inequality,
\begin{displaymath}
\text{rank\,}V+\text{rank\,}N-\ell n\leq \text{rank\,} VN \leq \text{min\,}(\text{rank\,}V,\text{rank\,}N),
\end{displaymath}
(see, e.g., \cite[p.741]{LPbook}) implies that $VN$ has rank $\ell n$, and hence $L$ is nonsingular. 

Next assume that $t>n$. Then the first $t$ columns of $N$ can be written $I_t \otimes (e^1_\ell)'$, so the first $t$ columns of $VN$ form the matrix
\begin{displaymath} 
\begin{split}
\left(\begin{bmatrix}Ze&Z^{2}e&\cdots&Z^t e\end{bmatrix}\otimes I_\ell\right)\left(I_t \otimes(e^1_\ell)'\right)\\
=\begin{bmatrix}Ze&Z^{2}e&\cdots&Z^t e\end{bmatrix}\otimes (e^1_\ell)',
\end{split}
\end{displaymath}
which in view of \eqref{rank=n} has rank $n<t$. Hence the columns of $VN$ are linearly dependent, and thus $L$ is singular. 
\end{proof}

Consequently, assuming that all observability indices are the same, we can solve \eqref{multinterpolation7} for $G$ to obtain
\begin{equation}
\label{multiinterpolation8}
G =(VN)^\dagger\hat{T} +(VN)^\dagger(Z\otimes  N_1Q + \dots + Z^t\otimes N_tQ)\hat{T}
\end{equation}
Since $Q=A+G$, \eqref{G} yields
\begin{equation}
\label{GammaSigma2G}
G=u + U(\Gamma PH'+\Sigma),
\end{equation}
where $u:=(VN)^\dagger\hat{T}$ and $U: \mathbb{R}^{\ell n\times\ell}\to\mathbb{R}^{\ell n\times\ell}$ is the linear operator 
\begin{displaymath}
Q\mapsto (VN)^\dagger(Z\otimes  N_1Q + \dots + Z^t\otimes N_tQ)\hat{T}.
\end{displaymath}
Then, inserting \eqref{GammaSigma2G} into \eqref{AREmod} we obtain the multivariable Covariance Extension Equation
\begin{equation}
\begin{split}
\label{multCEE}
P =&\Gamma (P-PH'HP)\Gamma' \\
&+(u + U\Sigma +U\Gamma PH')(u + U\Sigma +U\Gamma PH')' .
\end{split}
\end{equation}

\subsection{Main results in the multivariable case}

We redefine $\mathcal{S}_n$ for the multivarible case  to be the class of $\ell\times\ell$ matrix polynomials \eqref{A(z)} such that $\det A(z)$ has all its zeros in the open unit disc $\mathbb{D}$. Moreover, let $\mathcal{W}_+$ be the values in \eqref{multinterpolation} that satisfy the generalized Pick condition \eqref{Pickmatrix}. In the present matrix case, the relation \eqref{absigma} becomes
\begin{equation}
\label{AB2Sigma}
A(z)B(z^{-1})'+B(z)A(z^{-1})'=2\Sigma(z)RR'\Sigma(z^{-1})'.
\end{equation}
Let $\mathcal{P}_n$ be the space of pairs $(A,B)\in\mathcal{S}_{n}\times\mathcal{S}_{n}$ such that $A(z)^{-1}B(z)$ is positive real. 
Then the  problem at hand is to find, for each $\Sigma\in\mathcal{S}_{n}$, a pair $(A,B)\in\mathcal{P}_n$ such that  \eqref{AB2Sigma} and \eqref{multinterpolation} hold. 

Clearly $\mathcal{S}_n$ consists of subclasses with different Jordan structures $J$ defined via \eqref{F}. In each such subclass $D(z)$ and $\Pi(z)$ in \eqref{A(z)}, as well as $N_1,N_2,\dots,N_t$ in \eqref{N}, are the same.

From this calculation we have the following theorem. For the details of the proof of the last statement \eqref{deg=rank} we refer to \cite{BLpartial}.

\begin{theorem}\label{mainthm}
Given $(\Sigma,W)\in\mathcal{S}_n\times\mathcal{W}_+$, where $\Sigma(z)$ has all it observability indices equal.  Then there is a positive semidefinite solution $P$ to the Covariance Extension Equation \eqref{multCEE} such that $HPH'< I$.  To any such $P$ there corresponds a unique analytic interpolant \eqref{AB2F}, where  the matrices $A$ and $B$ are given by 
\begin{equation}
\label{AB}
\begin{split}
A&=(I-U)(\Gamma PH'+\Sigma)-u \\
B&=(I+U)(\Gamma PH'+\Sigma)+u
\end{split}
\end{equation}
The matrix polynomials $A(z)$ and $B(z)$ have the same Jordan structure as $\Sigma(z)$,  and they satisfy \eqref{AB2Sigma} with 
\begin{equation}
\label{P2R}
R=(I-HPH')^{\frac{1}{2}}.
\end{equation}
Finally, 
\begin{equation}
\label{deg=rank}    
\deg F = \rank\, P.
\end{equation}
\end{theorem}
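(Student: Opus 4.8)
The plan is to show that the construction \eqref{AB} inverts the derivation carried out in Sections~\ref{subsec:realization}--\ref{sec:multiCEE}, so that existence reduces to classical degree-constrained matrix interpolation theory and the remaining assertions are bookkeeping about the matrix-fraction representations \eqref{AinvB}--\eqref{AinvSigma}. First I would record the \emph{forward} direction already implicit in that derivation: by the Pick condition \eqref{Pickmatrix} together with a multivariable analogue of Theorem~\ref{difthm1} (degree-constrained matrix Nevanlinna--Pick theory, e.g.\ \cite{blomqvist,b1,KLR}), for every $(\Sigma,W)\in\mathcal{S}_n\times\mathcal{W}_+$ there is an interpolant $\Phi_+(z)=\tfrac12 A(z)^{-1}B(z)$ with $(A,B)\in\mathcal{P}_n$ lying on the prescribed leaf, i.e.\ satisfying \eqref{AB2Sigma}; its observable realization \eqref{Phi+} then carries a stabilizing solution $P$ of the algebraic Riccati equation \eqref{Riccati2} with $P\geq 0$ and $HPH'<I$ (because $RR'=I-HPH'>0$, $R$ being invertible by strict positivity of $\Phi$ on $\mathbb{T}$), and the chain \eqref{AREmod}--\eqref{GammaSigma2G} shows this $P$ solves \eqref{multCEE}. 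The equal-observability-indices hypothesis enters here exactly once, through Lemma~\ref{VNlem}, which is precisely what makes \eqref{GammaSigma2G} --- hence \eqref{multCEE} --- legitimate.

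For the converse, given any $P\geq 0$ with $HPH'<I$ solving \eqref{multCEE}, set $R:=(I-HPH')^{1/2}$ and define $A,B$ by \eqref{AB}; equivalently $G=\tfrac12(B-A)=u+U(\Gamma PH'+\Sigma)$ and $\tfrac12(A+B)=\Gamma PH'+\Sigma$. Running the derivation of \eqref{multCEE} backwards: \eqref{multCEE} is \eqref{AREmod}, which, after substituting $F=\Gamma+KR^{-1}H$ and $K=(G-FPH')(R')^{-1}$, is equivalent to \eqref{Riccati2}; and \eqref{AREmod} together with \eqref{AinvB}--\eqref{AinvSigma} yields \eqref{AB2Sigma}, whence $\Phi(z):=\Phi_+(z)+\Phi_+(z^{-1})'=V(z)V(z^{-1})'\geq 0$ on $\mathbb{T}$ with $V(z)=A(z)^{-1}\Sigma(z)R$, so $\Phi_+=\tfrac12 A^{-1}B$ is positive real and $(A,B)\in\mathcal{P}_n$. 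That $A(z)$ has the same Jordan structure as $\Sigma(z)$ follows from $A=Q-G$ and the structure of the matrices $N_1,\dots,N_t$ in \eqref{Nmatrix}, which forces $A(z)=D(z)+\Pi(z)A$ with the same $D(z),\Pi(z)$; and $A\in\mathcal{S}_n$, i.e.\ $\det A(z)$ a Schur polynomial, follows from $F=J-AH$ being a stability matrix once $P$ is recognized as the stabilizing solution of \eqref{Riccati2}. Retracing \eqref{multinterpolation4}--\eqref{multiinterpolation8} then recovers \eqref{multinterpolation3}, hence the interpolation conditions \eqref{multinterpolation}, and uniqueness of the interpolant attached to a given $P$ is immediate since $A$ and $B$ are explicit functions of $P$.

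For the degree identity \eqref{deg=rank} I would argue exactly as in the scalar case \cite{BLpartial}: $\deg F=\deg\Phi_+$ since $F(z)=\Phi_+(z^{-1})$; $\deg\Phi_+=\deg V$ because, given $A$ and $\Sigma$, the relation \eqref{AB2Sigma} puts $\Phi_+$ (through $B$) in one-one correspondence with $V$ (through $\Sigma R$) with identical pole--zero cancellations; and $\deg V=\rank P$ from stochastic realization theory \cite[Ch.~6]{LPbook}, since $P$ is the minimal solution of \eqref{Riccati2} and the forward innovations realization of the minimum-phase factor $V$ then has state dimension $\rank P$. I expect the genuine obstacle to lie not in any single one of these steps but in the two places where the multivariable story really departs from the scalar one: showing that \emph{every} solution of \eqref{multCEE} with $HPH'<I$ is the stabilizing one, so that $A\in\mathcal{S}_n$, and the \emph{uniqueness} of $P$ --- the matrix counterpart of the diffeomorphism in Theorems~\ref{difthm1} and \ref{CEEthm}. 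In the scalar case injectivity rested on the foliation result of \cite{b8} and on \cite{KLR}; the nontrivial Jordan structure here (cf.\ \cite{Takyar}) obstructs that argument, which is exactly why the theorem asserts only existence of $P$ and uniqueness of the interpolant it determines, rather than a full bijection, and why closing that gap is the hard part.
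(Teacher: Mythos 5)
Your skeleton is indeed the paper's: the published "proof" of Theorem~\ref{mainthm} is nothing more than the derivation in the two preceding subsections (stochastic realization theory giving \eqref{AREmod}, then Lemma~\ref{VNlem} and \eqref{GammaSigma2G} to inject the data and arrive at \eqref{multCEE}, with \eqref{AB} and \eqref{P2R} inverting the construction) together with a pointer to \cite{BLpartial} for $\deg F=\operatorname{rank}P$. The genuine gap is in your forward/existence step: you ground it in ``a multivariable analogue of Theorem~\ref{difthm1}'' attributed to \cite{blomqvist,b1,KLR}, which would supply, for every $(\Sigma,W)\in\mathcal{S}_n\times\mathcal{W}_+$, an interpolant on the prescribed leaf \eqref{AB2Sigma}. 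Those references are scalar, and a degree-constrained existence (let alone uniqueness) theorem for a general matricial prior $\Sigma(z)$ is exactly what is \emph{not} available: the paper itself stresses that only partial results exist, for special structures of the prior, which is why Theorem~\ref{mainthm2} (the case $\Sigma(z)=\sigma(z)I$, where \cite{BLN,FPZbyrneslindquist} apply and the scalar proofs of Theorems~\ref{difthm1} and \ref{CEEthm} carry over) is stated separately, and why Theorem~\ref{mainthm} is deliberately weaker. So existence cannot be outsourced to that literature; the paper rests it on the calculation itself, not on a matrix version of Theorem~\ref{difthm1}.

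A second, related point: your closing paragraph asserts that the difficulty of showing that \emph{every} solution $P\geq 0$ of \eqref{multCEE} with $HPH'<I$ is the stabilizing solution of \eqref{Riccati2} (hence $A\in\mathcal{S}_n$) lies outside what the theorem claims. It does not: the statement ``to any such $P$ there corresponds a unique analytic interpolant \eqref{AB2F}'' presupposes precisely that the $A$ produced by \eqref{AB} from an \emph{arbitrary} such solution has $\det A(z)$ Schur, since otherwise $F=\tfrac12 A_*^{-1}B_*$ is neither analytic in $\mathbb{D}$ nor positive real, and the identity \eqref{AB2Sigma} on the unit circle alone does not give positive realness without control of the pole locations. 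Your converse explicitly leans on ``once $P$ is recognized as the stabilizing solution,'' which is the step you then defer; under your own reading, therefore, the converse half of the proposal is incomplete at exactly the point the theorem needs. What is genuinely not claimed is uniqueness of $P$; on that point your assessment agrees with the paper's discussion following the theorem.
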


 This result is considerably weaker than the scalar version  Theorem~\ref{CEEthm}. Theorem~\ref{mainthm} does not guarantee that a solution to \eqref{multCEE} is unique. In fact, if there were two solutions to \eqref{multCEE}, there would be two interpolants, a unique one for each solution $P$.
Moreover, the condition on the observability indices  restricts the classes of Jordan structures that are feasible.  

\begin{theorem}\label{mainthm2}
Given $(\Sigma,W)\in\mathcal{S}_n\times\mathcal{W}_+$, where $\Sigma(z)=\sigma(z) I$ with $\sigma(z)$  a scalar Schur polynomial.  Then there is a unique positive semidefinite solution $P$ to the Covariance Extension Equation \eqref{multCEE} such that $HPH'< I$ and a corresponding  unique analytic interpolant \eqref{AB2F}, where $A(z)$ and $B(z)$ have the same Jordan structure as $\Sigma(z)$, and the matrices $A$ and $B$ are obtained as in  Theorem~\ref{mainthm}.
Finally, $\deg F(z) = \rank P$.
\end{theorem}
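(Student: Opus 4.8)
The plan is to deduce Theorem~\ref{mainthm2} from Theorem~\ref{mainthm} by showing that in the special case $\Sigma(z)=\sigma(z)I$ the solution $P$ of \eqref{multCEE} is in fact \emph{unique}. When $\Sigma(z)=\sigma(z)I$ the matrix polynomial $\Sigma(z)$ has all observability indices equal (to $n$), so Theorem~\ref{mainthm} applies and gives the existence of at least one positive semidefinite $P$ with $HPH'<I$, together with the interpolant \eqref{AB2F}, the formulas \eqref{AB}, the positivity relation \eqref{AB2Sigma}, and the degree identity $\deg F=\rank P$. Thus everything except uniqueness is already in hand, and the degree statement at the end will follow immediately once uniqueness is established.

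First I would set up the correspondence between solutions $P$ and interpolants. By Theorem~\ref{mainthm}, any solution $P$ of \eqref{multCEE} with $HPH'<I$ produces, via \eqref{AB}, a pair $(A,B)\in\mathcal P_n(\Sigma)$ with $A(z)^{-1}B(z)$ positive real, satisfying the interpolation conditions \eqref{multinterpolation} and the factorization \eqref{AB2Sigma} with $R=(I-HPH')^{1/2}$. Conversely, given such an $(A,B)$, equation \eqref{AREmod} — which holds for any realization coming from stochastic realization theory as in subsection~\ref{subsec:realization} — together with the derivation in subsection~\ref{sec:multiCEE} shows that the associated $P$ (the minimal/stabilizing solution of the Riccati equation \eqref{Riccati2}) solves \eqref{multCEE}. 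So it suffices to show: (i) for each $\Sigma=\sigma I\in\mathcal S_n$ and each $W\in\mathcal W_+$ there is exactly one $(A,B)\in\mathcal P_n(\Sigma)$ interpolating $W$; and (ii) the map $(A,B)\mapsto P$ just described is injective on that fiber.

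For step (i) I would invoke the scalar-style uniqueness theory. The key point is that when $\Sigma(z)=\sigma(z)I$, the positivity relation \eqref{AB2Sigma} degenerates to $A(z)B(z^{-1})'+B(z)A(z^{-1})' = 2\sigma(z)\sigma(z^{-1})RR'$, so that, exactly as in the scalar foliation argument of \cite{b8} and in the proof of Theorem~\ref{difthm1}, the leaf $\mathcal P_n(\Sigma)$ is parametrized diffeomorphically (through $AA^*$, or equivalently through the spectral factor) by the interpolation data. Concretely, I would rewrite the interpolation problem as the matrix generalized moment problem for $\mathrm{Re}\,\Phi_+$ on $\mathbb T$ with the same Herglotz-type kernels as in the scalar case, and apply the convex-optimization / bijection results of \cite{BLGuM,KLR,blomqvist,b1} that are valid for matrix-valued Carathéodory interpolants with scalar denominator-type complexity constraints. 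These give a diffeomorphism between $W\in\mathcal W_+$ and the spectral factor $V$ (equivalently $A$), and then $B$ is determined linearly from $(A,\sigma)$ via \eqref{AB2Sigma}, with $R$ the normalizing factor. This yields uniqueness of $(A,B)$.

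For step (ii), injectivity of $(A,B)\mapsto P$: from the realization side, $(A,B)$ determines $F$ (hence $G$ and $A,B$ as matrices) via \eqref{AinvB}, and then \eqref{AREmod} is a \emph{Lyapunov} equation once $G$ is known — indeed \eqref{multCEE} in the form $P=\Gamma(P-PH'HP)\Gamma'+GG'$ shows, as noted after \eqref{LyapunovCEE} in the scalar discussion, that $P$ is uniquely recovered from its "first block column" $PH'$, and $PH'$ in turn is pinned down by $G=\Gamma PH'+\Sigma-A$ in \eqref{G} together with the reachability/nonsingularity used in Lemma~\ref{VNlem}. So distinct admissible $P$'s would force distinct $(A,B)$'s, contradicting step~(i); hence the solution of \eqref{multCEE} with $HPH'<I$ is unique. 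Combining with Theorem~\ref{mainthm} gives the unique interpolant and the formulas \eqref{AB}, \eqref{P2R}, and finally $\deg F=\rank P$ is just the specialization of \eqref{deg=rank}.

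The main obstacle I anticipate is step (i): making sure the scalar uniqueness machinery genuinely carries over to the matrix-valued interpolant when only the \emph{denominator-side} complexity is scalar ($\Sigma=\sigma I$). One must check that the relevant foliation result of \cite{b8} and the moment-problem bijection of \cite{KLR} are available (or adaptable) in this matrix setting with equal observability indices; the equal-index hypothesis is precisely what makes $L$ in \eqref{VN} nonsingular (Lemma~\ref{VNlem}) and keeps the parametrization well-posed, so the argument should go through, but verifying the diffeomorphism claim in full matrix generality is the delicate part. Everything else is bookkeeping with the Lyapunov/Riccati identities already derived in subsections~\ref{subsec:realization}–\ref{sec:multiCEE}.
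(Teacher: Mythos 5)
Your overall strategy is the same as the paper's: for $\Sigma(z)=\sigma(z)I$ all observability indices are equal, existence and uniqueness of the underlying matrix-valued interpolation problem with this prior is already available in the literature, and the scalar arguments of Theorems~\ref{difthm1} and \ref{CEEthm} are then carried over \emph{mutatis mutandis} (unique pair $(A,B)$ on the leaf determined by $\Sigma$, then a unique $P$ recovered from $(A,B)$ via a Lyapunov equation). Regarding your step (i), the "delicate part" you flag is not resolved in the paper by adapting the scalar machinery of \cite{b8,KLR}; it is resolved by citing \cite{BLN,FPZbyrneslindquist}, which establish existence and uniqueness precisely for the matrix-valued problem with $\Sigma=\sigma I$. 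Your references \cite{BLGuM,KLR,blomqvist,b1} are scalar results, so as written step (i) rests on an unproved matrix extension; the gap closes once you point to the correct matrix-valued sources.

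There is a genuine flaw in your step (ii). From \eqref{AB} one gets $\Gamma PH'+\Sigma=\tfrac12(A+B)$, so knowing $(A,B)$ pins down $\Gamma PH'$, \emph{not} $PH'$: the eigenvalues of $\Gamma=J-\Sigma H$ are the zeros of $\det\Sigma(z)$, and a Schur polynomial may vanish at the origin (e.g.\ $\sigma(z)=z^n$, the standard maximum-entropy prior, gives $\Gamma=J$, which is nilpotent), so $\Gamma$ need not be invertible and your "pinned down by $G=\Gamma PH'+\Sigma-A$" step fails for such priors. Lemma~\ref{VNlem} concerns nonsingularity of $L$ in \eqref{VN}, not of $\Gamma$, so it cannot rescue this. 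The correct way to recover $P$ from $(A,B)$ — and the one implicit in the paper's scalar proof of Theorem~\ref{CEEthm} via \eqref{ab2P} — is to rewrite \eqref{AREmod} as a Lyapunov equation in the nilpotent shift $J$, namely $P-JPJ'=-\tfrac12(AB'+BA')+\Sigma RR'\Sigma'$, where $RR'$ is determined from $(A,B,\Sigma)$ by the normalization \eqref{AB2Sigma}. Since $J$ is a stability matrix this equation has a unique solution, so distinct admissible solutions of \eqref{multCEE} would produce distinct interpolants, and uniqueness of the interpolant (step (i)) then forces uniqueness of $P$, $HPH'<I$ following from \eqref{P2R}. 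With that repair, and with the citations for step (i) corrected, your argument coincides with the paper's.
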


The observability indices of $\Sigma(z)$ in Theorem~\ref{mainthm2} are all the same. Moreover, for this case, existence and uniqueness of the underlying multivariable analytic interpolation problem have already been established \cite{BLN,FPZbyrneslindquist}. Then the proof of Theorems~\ref{difthm1} and \ref{CEEthm} can be modified for the resulting setting {\em mutatis mutandis}. 

Recently there have been several results \cite{FPZbyrneslindquist,RFP,Takyar,ZhuBaggio,Zhu} on the question of existence and uniqueness of the multivariate analytic interpolation problem, mostly for the covariance extension problem ($m=0, n_0=n+1$), but there are so far only partial results and for special structures of the prior (in our case $\Sigma(z)$). Especially the question of uniqueness has proven elusive. Perhaps, as suggested in \cite{Takyar}, this is due to the Jordan structure, and this could be the reason for the condition on the observability indices required in Theorem~\ref{mainthm}. In any case, as long as our algorithm delivers a solution to the Covariance Extension Equation, we will have a solution to the analytic interpolation problem, unique or not. An advantage of our method is that  \eqref{deg=rank} can be used for model reduction, as will be illustrated in Section~\ref{sec:examples}.

\subsection{An algorithm for the multivariable CEE}\label{sec:multialgorithm}

As in the scalar case we shall use a homotopy continuation method. We assume from now on that $t:= t_1=t_2=\dots,t_\ell =n$. When $u=0$, $\hat{T}=0$, and hence $U=0$. Then the modified Riccati equation \eqref{multCEE} becomes $P=\Gamma(P-PH'HP)\Gamma'$, which has the solution $P=0$. We would like to make a continuous deformation of $u$ to go from this trivial solution to the solution of \eqref{multCEE}, so we choose $u(\lambda)=\lambda u$ with $\lambda\in [0,1]$. The corresponding deformation of $U$ is $\lambda U$, and  $T$ is deformed to $\lambda T$. Since \eqref{T} implies that $W=(I- T)^{-1} -\tfrac12 I$, the value matrix \eqref{W} will then vary as $W(\lambda)=(I-\lambda T)^{-1} -\tfrac12 I$. Then, the proof that $W(\lambda)\in\cal{W}_+$ is {\em mutatis mutandis\/} the same as in subsection~\ref{scalarhomotopy}. Hence $W(\lambda)$ satisfies \eqref{Pickmatrix}
 along the whole trajectory.
 
 Analogously with the scalar case, we reduce the problem to solving for the $n\ell\times \ell$ matrix
\begin{equation}
p=PH' .
\end{equation}
To this end, we note that the matrix version of \eqref{AB2Sigma} is 
\begin{equation}
\label{SM}
\begin{split}
&S(A)M(B)+S(B)M(A)\\
&=2S(\Sigma)(I_{n+1}\otimes RR')M(\Sigma)
\end{split}
\end{equation}
where
\begin{equation*}
S(A)=\begin{bmatrix}
I&A_{1}&\cdots&A_n\\
~&I&\cdots&A_{n-1}\\
~&~&\ddots&\vdots\\
~&~&~&I
\end{bmatrix}\qquad M(A)=\begin{bmatrix}
I\\
A_{1}'\\
\vdots\\
A_{n}'
\end{bmatrix}.
\end{equation*}

From \eqref{AB}, \eqref{R}  and \eqref{G} we have 
\begin{displaymath}
A+B=2(\Gamma PH'+\Sigma)=2(JPH' +\Sigma RR').
\end{displaymath}
Since $e^n_nJ_n=0$ and hence $N_nJ=0$, this yields the relation 
\begin{displaymath}
A_n+B_n=2\Sigma_nRR' 
\end{displaymath}
between  $A_n=N_nA$, $B_n=N_nB$ and $\Sigma_n=N_n\Sigma$. This is the same as the last block row in \eqref{SM}, which can therefore be deleted, leaving us with
\begin{align*}
\begin{bmatrix}
I_{n\ell}&0_{n\ell\times \ell}
\end{bmatrix}&(S(A)M(B)+S(B)M(A))\\
&=2\begin{bmatrix}I_{n\ell}&0_{n\ell\times \ell}\end{bmatrix}S(\Sigma)(I_{n+1}\otimes RR')M(\Sigma) .
\end{align*}

Consequently, we use the homotopy 
\begin{equation}
\begin{split}
\mathcal{H}(p,\lambda):=&\begin{bmatrix}
I_{n\ell}&0_{n\ell\times \ell}
\end{bmatrix}\big(S(A)M(B)+S(B)M(A)\\
&-2S(\Sigma)(I_{n+1}\otimes (I-Hp))M(\Sigma)\big)=0 ,
\end{split}
\end{equation}
where
\begin{equation}
\begin{split}
A&=A(p,\lambda):=\Gamma p+\Sigma-\lambda u-\lambda U(\Gamma p +\Sigma) \\
B&=B(p,\lambda):=\Gamma p+\Sigma+\lambda u+\lambda U(\Gamma p +\Sigma)\\
\end{split}
\end{equation}
depend on $(p,\lambda)$, thus reducing the problem to solving the differential equation
\begin{equation}
\begin{split}
\label{diffequ2}
&\frac{d}{d\lambda}\text{vec}(p(\lambda))=\left[\frac{\partial \text{vec}(\mathcal{H}(p,\lambda))}{\partial \text{vec}(p)}\right]^{-1}\frac{\partial \text{vec}(\mathcal{H}(p,\lambda))}{\partial\lambda}\\
&\text{vec}(p(0))=0
\end{split}
\end{equation}
\cite{AllgowerGeorg}, which has the solution $ \hat{p}(\lambda) $ for $ 0\leq\lambda\leq1$. The solution of \eqref{multCEE} is then obtained by finding the unique solution of the  Lyapunov equation
\begin{equation}
\begin{split}
&P-\Gamma P\Gamma' =-\Gamma p(1)p(1)'\Gamma' \\
&\phantom{xxxxx}+(u + U(\Gamma p(1) +\Sigma))(u + U(\Gamma p(1) +\Sigma))'.
\end{split}
\end{equation}

\section{Some numerical examples}\label{sec:examples}

\subsection{Spectral estimation with model reduction}

Consider a transfer function \eqref{v}, i.e.,
\begin{displaymath}
v(z)=\rho\frac{\sigma(z)}{a(z)},
\end{displaymath}
of degree 7 with zeros at $0.9e^{\pm2.6i}$, $0.5e^{\pm1.3i}$, $0.94e^{\pm1.6i}$, $0.3$, poles at $0.1e^{\pm1.9i}$, $0.8e^{\pm1.35i}$, $0.7e^{\pm2.1i}$, $0.1$, and $\rho=0.5$, as depicted in Fig.~\ref{fig2}.
\begin{figure}[!htp]
	\centering
	\includegraphics[width=0.30\textwidth]{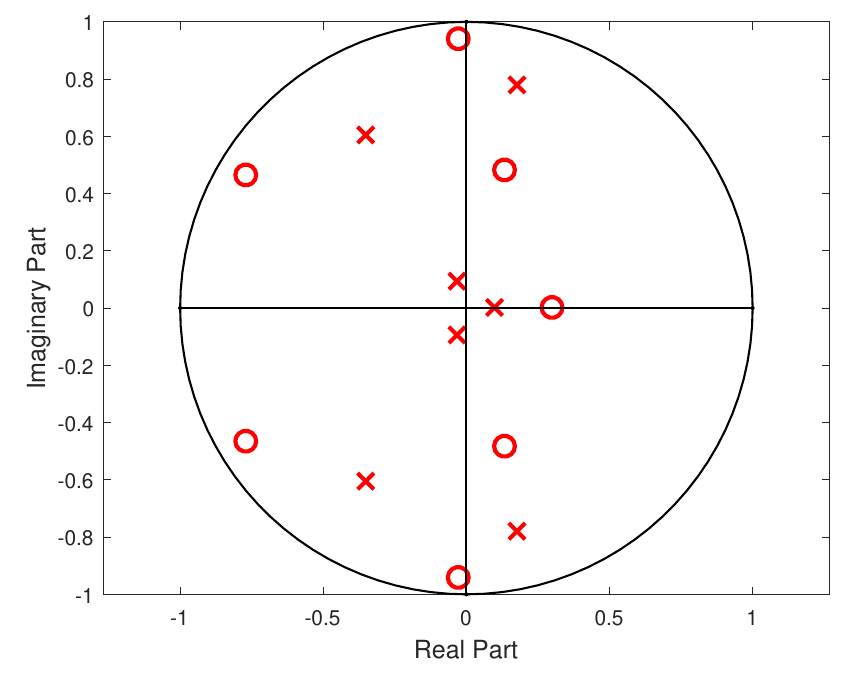}
	\caption{The position of poles ($\times$) and zeros ($\circ$).}
	\label{fig2}
\end{figure}
Now passing normalized white noise through the filter 
\[
\text{white noise\;\,}\negmedspace{\longrightarrow}\fbox{$v(z)$}\negmedspace\stackrel{y}
{\longrightarrow} 
\]
with $v(z)$ as its transfer function, we generate an observed time series $y_0,y_1,y_2,\dots,y_N$. Then, using the covariance estimates \eqref{ckest} and appropriate choice of $\sigma(z)$, we can determine an estimate of the spectral density of $y$ by using the methods in subsection~\ref{sec:CEE}.  In terms of the general interpolation problem \eqref{interpolation} this corresponds to choosing $m=0$, $n_0=8$ and $z_0=0$, i.e., all the interpolation points at zero. 

However it was shown in \cite{b2,GL1} that a higher resolution estimate (in a designated band of frequences) can be obtained by moving some of the interpolation points away from zero closer to the unit circle. This in known by the acronym THREE (Tunable High REeolution Estimator). This can be one by passing the signal $y$ through a bank of filters as in Fig.~\ref{fig:box1} with
$$G_{j}(z)=z(zI-Z_{j})^{-1}e_{1}^{n_{j}},\quad j=0,1,\cdots,m ,$$
where $Z_j$ is given by \eqref{Z}. In the present example we choose $n_0=4$, $n_1=n_2=n_3=n_4=1$, $z_1=0.98e^{2.1i}$, $z_2=0.98e^{-2.1i}$, $z_3=0.99$ and $z_4=-0.99$. 
\begin{figure}[!htp]
	\centering
	\includegraphics[width=0.32\textwidth]{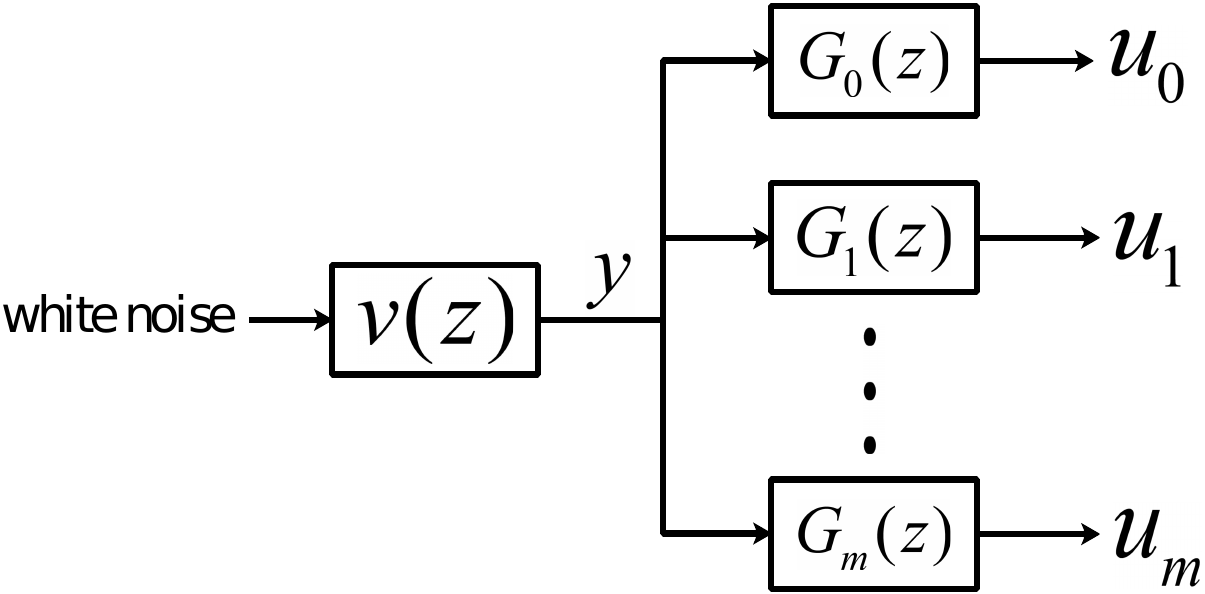}
	\caption{The bank of filters}
	\label{fig:box1}
\end{figure}

With $u$ the output vector of the bank of filters, an estimate of the covariance matrix $\Sigma:=\mathbb{E}\{u(t)u^{*}(t)\}$ yields the matrix $W$ in \eqref{WWj}
 by solving the Lyapunov equation
\eqref{Pick} in Proposition~\ref{LyapunovW}. Then using the homotopy continuation algorithm in subsection~\ref{scalarhomotopy}, we obtain a solution to estimation problem. In fact, using the  $\sigma(z)$ with zeros $0.9e^{\pm2.6i}$,  $0.94e^{\pm1.6i}$, $0.5e^{\pm1.3i}$ and $0.3$, we see in Fig.~\ref{trajectory} how the trajectories of the poles, i.e., the zeros of $a(p(\lambda))$, move as $\lambda$ varies from $0$ to $1$. The poles for $\lambda =0$ are marked with circles and the poles for $\lambda=1$ by $\times$. The continuity of the trajectory shows the feasibility of the homotopy continuation method.
\begin{figure}[!htp]
	\centering
	\includegraphics[width =0.30\textwidth]{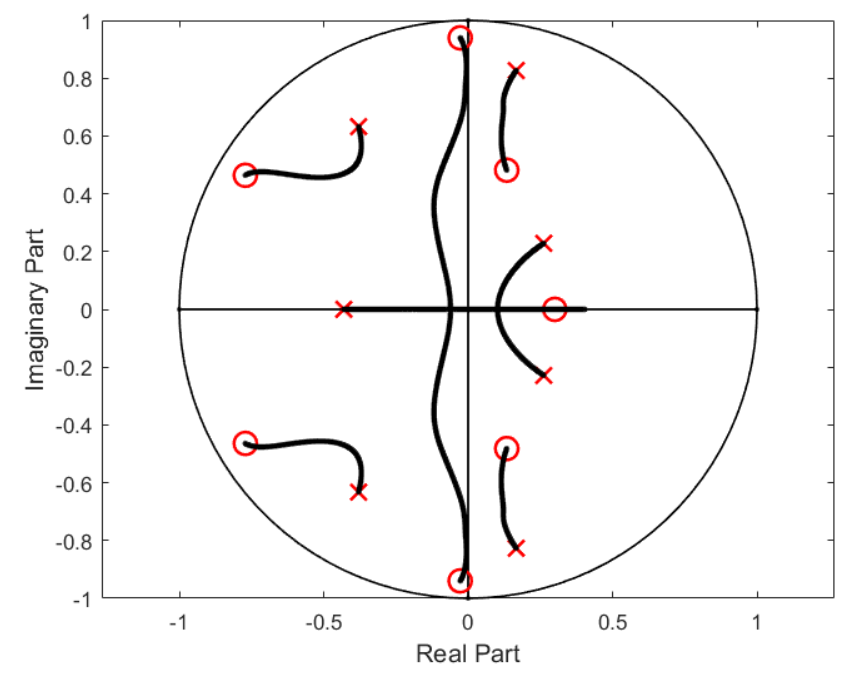}
	\caption{The trajectories of the poles}
	\label{trajectory}
\end{figure}

Moreover, we obtain a solution $P$ of CEE with the singular values
\begin{displaymath}
0.7435,\, 0.1328,\, 0.0794,\, 0.0630,\, 0.0023,\, 0.0003,\, 6\times 10^{-6}, 
\end{displaymath} 
the last three of which are close to zero. Consequently, $P$ has approximately rank 4. Therefore, in view of \eqref{rankP=degf} and the fact that $\deg v=\deg f$, we can reduce the degree of $v(z)$ to 4 to obtain the reduced system $\hat{v}(z)$.  Fig.~\ref{estimate} shows the given spectral factor $v(z)$ together with the degree 7 solution and the approximate degree 4 approximation $\hat{v}(z)$. 
\begin{figure}[!htp]
	\centering
	\includegraphics[width=\columnwidth,height=0.65\columnwidth]{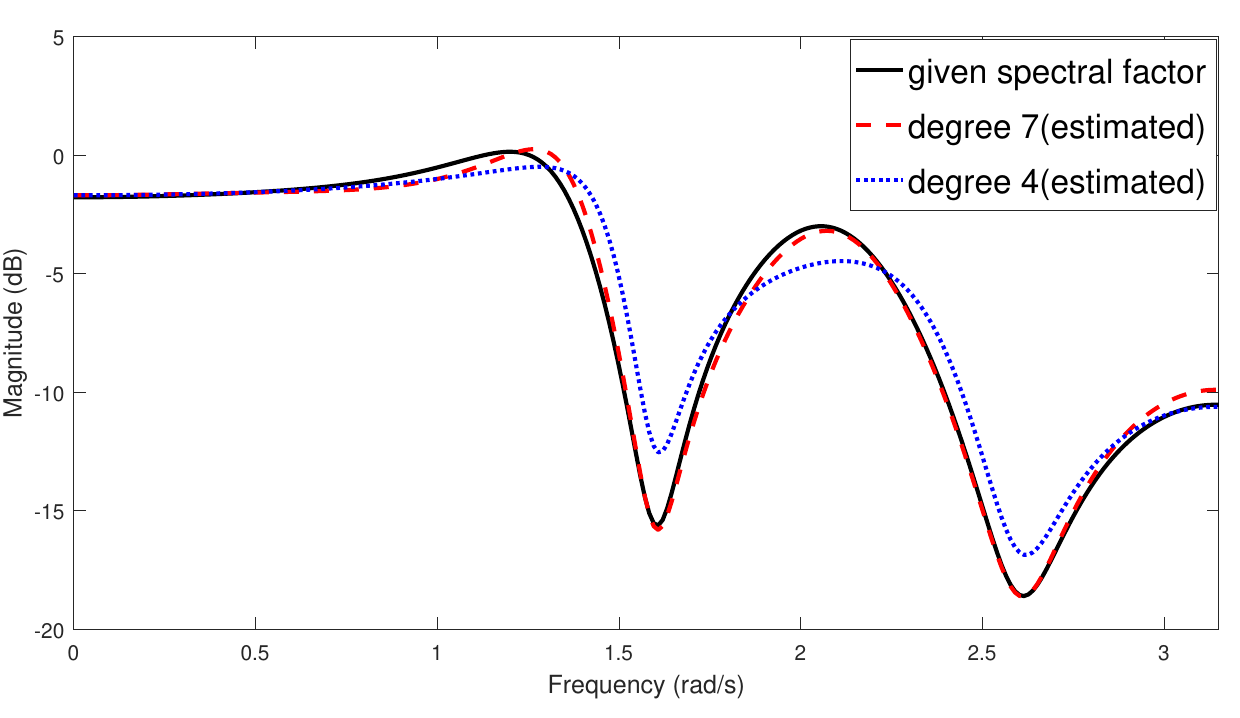}
	\caption{The given spectral factor and its estimated ones}
	\label{estimate}
\end{figure}

More precisely, $\hat{v}(z)=\hat{\rho}\hat\sigma'z)/\hat{a}(z)$
with $\hat\rho=0.5247$ and
$$\hat\sigma(z)=z^{4}+1.5973z^{3}+1.7783z^{2}+ 1.4073 z+0.7157,$$
$$\hat a(z)=z^{4}+0.9341z^{3}+1.112z^{2}+0.7007 z+0.3939,$$ 
where the last three spectral zeros of $\sigma(z)$ have been removed to obtain $\hat\sigma(z)$.
Likewise, computing the degree 5 and  6 approximations show that the corresponding solutions $P$ also have rank approximately 4. 

\subsection{Robust control with sensitivity shaping}

Given a plant 
\begin{equation}
P(z)=\frac{\left(z-1.1 e^{\frac{19}{20} \pi i}\right)\left(z-1.1 e^{-\frac{19}{20} \pi i}\right)}{z(z-1.1)\left(z^{2}+1.21\right)}
\end{equation}
and the feedback configuration in Fig.~\ref{robust},
\begin{figure}[!htp]
	\centering
	\includegraphics[width=0.32\textwidth]{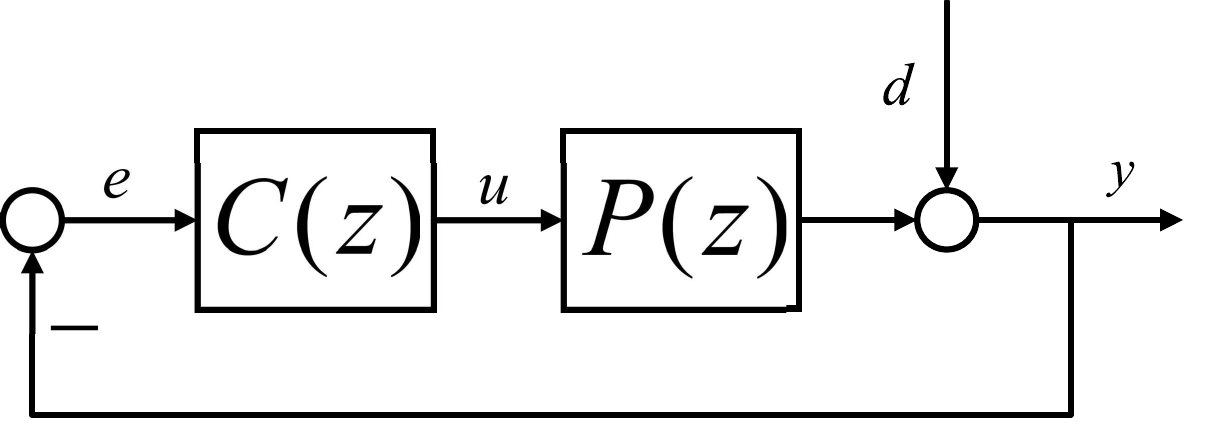}
	\caption{A feedback configuration}
	\label{robust}
\end{figure}
we need to find a controller $C$ such that the system is internally stable and satisfies the following specifications:
\begin{equation}\label{specification}
\begin{split}
&\left|S\left(e^{i \theta}\right)\right| \leq-1 \mathrm{dB},  \quad\theta \in[0,0.3](\mathrm{rad} / \mathrm{sec}) \\
&\left|S\left(e^{i \theta}\right)\right| \leq 0.5 \mathrm{dB}, \quad \theta \in[2.5, \pi](\mathrm{rad} / \mathrm{sec})  
\end{split}
\end{equation}
$$
\|S\|_{\infty}<5 \approx 13.98 \mathrm{dB}
$$	 
where
$$
S(z) :=1 /(1+P(z) C(z))
$$
is the sensitivity function. From the robust control literature \cite{DFT} we know that a necessary and sufficient condition for internal stability is that we have no unstable pole-zero cancellation between $P$ and $C$ in the sensitivity function and that the
sensitivity function is stable.

The plant $P(z)$ has three real unstable poles at $\pm 1.1i$ and $1.1$ and three unstable zeros at $\infty$ and $1.1 e^{\pm \frac{19}{20} \pi i}$ with multiplicities two, one, and one respectively. Since the system should be internally stable, the sensitivity function must satisfy the interpolation conditions
\begin{equation}\label{constraints}
\begin{split}
&S(\pm 1.1 i)=0,\quad S(1.1)=0\\
&S(\infty)=1,\quad S^{\prime}(\infty)=0,\quad S\left(1.1 e^{\pm \frac{19}{20} \pi i}\right)=1
\end{split}
\end{equation}
Since $\|S\|_{\infty}<5$, the function $g(z):=S(z)/5$ maps the exterior of the disc into the unit disc, so 
$$f(z):=\frac{1+g(z^{-1})}{1-g(z^{-1})}=\frac{5+S(z^{-1})}{5-S(z^{-1})}$$
maps the disc into the right half plane, and hence $f$ is a Carath{\'e}odory function.  To find such a function $f$ satisfying the given specifications \eqref{specification} and interpolation constraints \eqref{constraints} is an analytic interpolation problem of the type stated in Section~\ref{sec:intro}.  Since there are seven interpolation conditions, we can construct an interpolant of degree six by choosing six spectral zeros.

Note that the zeros of $f(z)+f(z^{-1})$ are the zeros of 
$$\Gamma(z):=25-S(z)S(z^{-1}).$$
Next we will show how to achieve the given specifications by choosing suitable zeros of $\Gamma(z)$.
Suppose $\Gamma(z)$ has one spectral zero $\lambda$ near $z=e^{i\theta}$, then $\left|S\left(e^{i \theta}\right)\right| \approx 5$ by the continuity of $\Gamma(z)$ at $z=e^{i \theta}$. So by choosing a spectral zero near $z=e^{i \theta}$, we can elevate the frequency response of $S$ at $\theta$ to about 5. More details can be found in \cite{Nagamune}.

 When we choose the spectral zeros at $0.98 e^{ \pm \frac{7}{15} \pi i}$, $0.97 e^{ \pm \frac{1}{2} \pi i}$, $0$ and $-0.1$. we obtain the sensitivity function and the controller as
 \begin{equation*}
S(z)=\frac{\splitfrac{z^6 -0.0414z^5+1.1873z^4-0.8951z^3}{-0.4795z^2-1.0224z-0.5470}}{\splitfrac{z^6 -0.0414z^5+1.5522z^4-0.0209z^3}{+0.5729z^2+0.0192z-0.0219}},
\end{equation*}
$$
C(z)=\frac{0.3648 z^3 + 0.08142 z^2 + 0.434 z}{z^3 + 1.059 z^2 + 1.142 z + 0.411},
$$
respectively.
The frequency response of $S$ is illustrated in Fig.~\ref{figure1}, from which we can see that the specifications are indeed fulfilled.
\begin{figure}[!htp]
	\centering
	\includegraphics[width=0.40\textwidth]{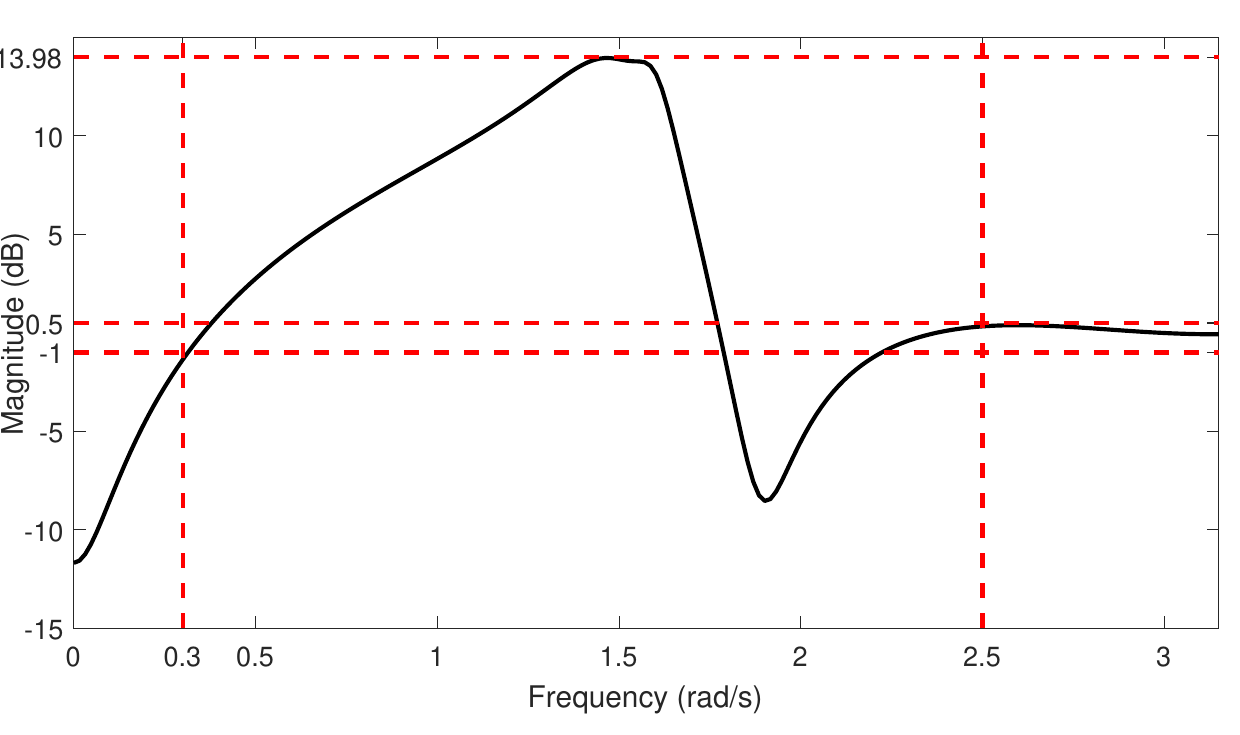}
	\caption{The frequency response of $ S $ which satisfies spec.}
	\label{figure1}
\end{figure}

\subsection{Model reduction in multivariable case}

Consider a system with a $2\times 2$ transfer function 
\begin{equation}\label{truesystem}
V(z)=A(z)^{-1}\Sigma(z)R
\end{equation}
of dimension ten and with observability indices $t_1=t_2=5$,
where
\begin{displaymath}
R=\begin{bmatrix}
2&1\\
1&2
\end{bmatrix},
\qquad
A=\begin{bmatrix}A_{11}&A_{12}\\A_{21}&A_{22}\end{bmatrix}
\end{displaymath}
with
\begin{align*}
A_{11} & = z^{5} - 0.11 z^{4} - 0.08 z^{3} + 0.05 z^{2} - 0.05 {z} - 0.13 \\
A_{12} & = -0.02 z^{4} - 0.15 z^{3} + 0.1 z^{2} - 0.09 {z} - 0.09\ \\
A_{21} & = 0.11 z^{4} + 0.09 z^{3} - 0.03 z^{2} - 0.1 z + 0.12 \\
 A_{22} & = z^{5} + 0.07 z^{4} + 0.19 z^{3} - 0.03 z^{2} - 0.13 z + 0.05,
\end{align*}
and
\begin{equation*}
\Sigma(z)=(z-0.1)(z-0.9)(z-0.37)(z+0.4)(z+0.95)I_{2}.
\end{equation*}
Fig.~\ref{multi} shows the location of poles and zeros \begin{figure}[!thp]
	\centering
	\includegraphics[width=0.30\textwidth]{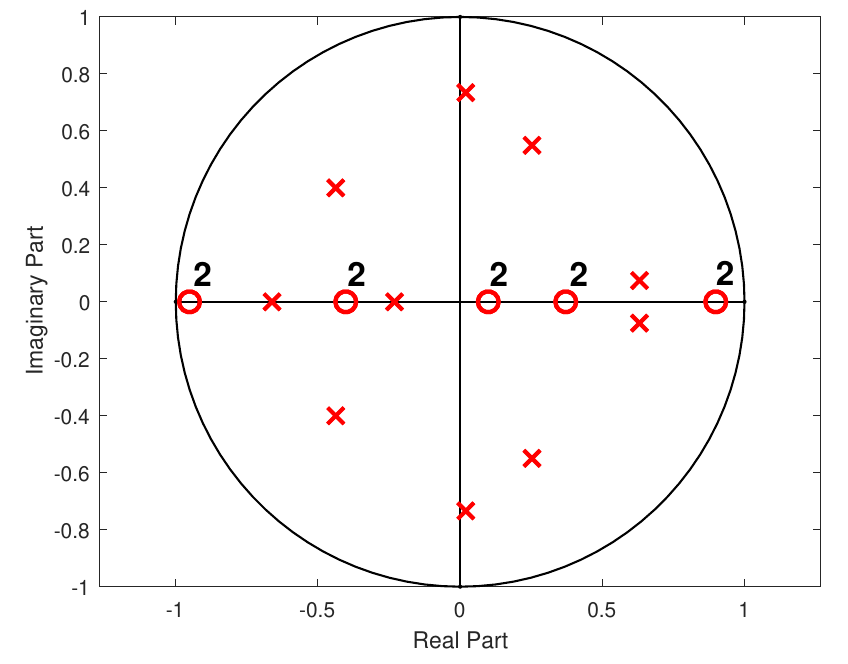}
	\caption{The locations of poles and zeros of $V(z)$}
	\label{multi}
\end{figure}
("2" means there are two zeros at the same position). Clearly there is no pole zero cancellation.
Let $F$ be the matrix-valued Carath\'eodory function $F(z):= \Phi_+(z^{-1})$, where $\Phi_+$ is the positive real function satisfying \eqref{VPhi+}.

Next, passing normalized (vector-valued) white noise through the filter
\[
\text{white noise}\stackrel{u}\negmedspace{\longrightarrow}\fbox{$V(z)$}\negmedspace\stackrel{y}
{\longrightarrow} 
\]
with transfer function $V(z)$, we generate a vector-valued stationary process $y$ with an observed record $y_0, y_1,y_2, \dots,y_N$, and from this output data we estimate the $2\times 2$ matrix valued covariance sequence 
\begin{equation}
\hat{C}_{k}=\frac{1}{N-k+1}\sum_{t=k}^{N}y_{t}y_{t-k}' .
\end{equation}
We want to determine a matrix-valued Carath\'eodory function $F$ satisfying the interpolation conditions
\begin{equation}
\label{ }
 \frac{1}{k!}F^{(k)}(0)=\hat{C}_{k},\quad k=0,1,\cdots,5  .
 \end{equation}
This is a matrix-valued covariance extension problem, which takes the form \eqref{multinterpolation} with  $\ell=2$, $m=0$ and $n_0=6$. Using the homotopy method of subsection~\ref{sec:multialgorithm}, the poles move as $\lambda$ varies from $0$ to $1$ as shown in Fig.~\ref{trajemulti}.
\begin{figure}[!htp]
	\centering
	\includegraphics[width=0.32\textwidth]{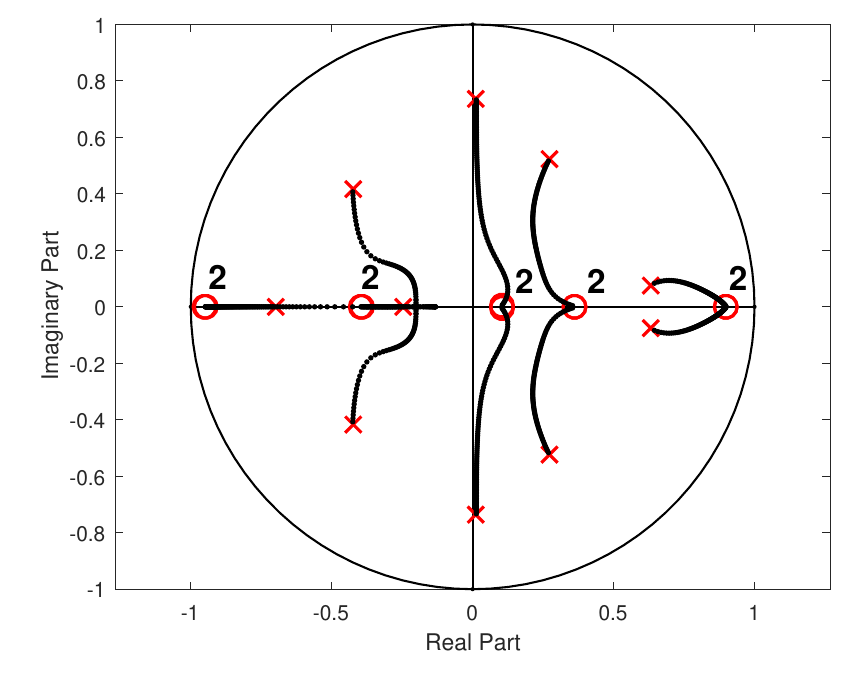}
	\caption{The trajectory of poles as $\lambda$ varies from $0$ to $1$}
	\label{trajemulti}
\end{figure}

The modified Riccati equation has a solution $P$ with eigenvalues
\begin{equation*}
\begin{split}
&5.8\times10^{-6},2.03\times10^{-4},1.8\times10^{-3},3.9\times10^{-3},6\times10^{-3},\\
&0.03, 0.365,0.4879,0.7895,0.8967
\end{split}
\end{equation*}
The first six eigenvalues are very small, so we can reduce the degree of this system from 10 to  4 by choosing the first three covariance lags $\hat{C}_0,\hat{C}_1,\hat{C}_2$  and removing six zeros of $\Sigma(z)$. We choose two double zeros at $0.9$ and $-0.95$. The reduced system 
$$\hat{V}(z)=H\hat{A}(z)^{-1}\hat\Sigma(z)\hat R$$
has observability indices $t_1=t_2=2$, and
\begin{equation*}
H=\begin{bmatrix}
535/378  &     -363/3758\\
-363/3758  &     891/523
\end{bmatrix},\end{equation*}
\begin{displaymath}
\hat{R}=\begin{bmatrix}
    1623/1138  &    1177/1625  \\
1997/3448    &  7279/6408 
\end{bmatrix},
\end{displaymath}
\begin{equation*}
\hat{A}(z)=\begin{bmatrix}
z^2 - 0.01968 z + 0.09216&-0.1574 z - 0.08796\\
0.03346 z + 0.1083&z^2 + 0.1314 z + 0.4366
\end{bmatrix},
\end{equation*}
\begin{equation*}
\hat{\Sigma}(z)=(z-0.9)(z+0.95)I_{2}.
\end{equation*}
The singular values of the true system \eqref{truesystem} together with those of the estimated systems of degree 10 and 4 are shown in Fig.~\ref{figure2}. 
\begin{figure}[!htp]
	\centering
	\includegraphics[width = 0.38\textwidth]{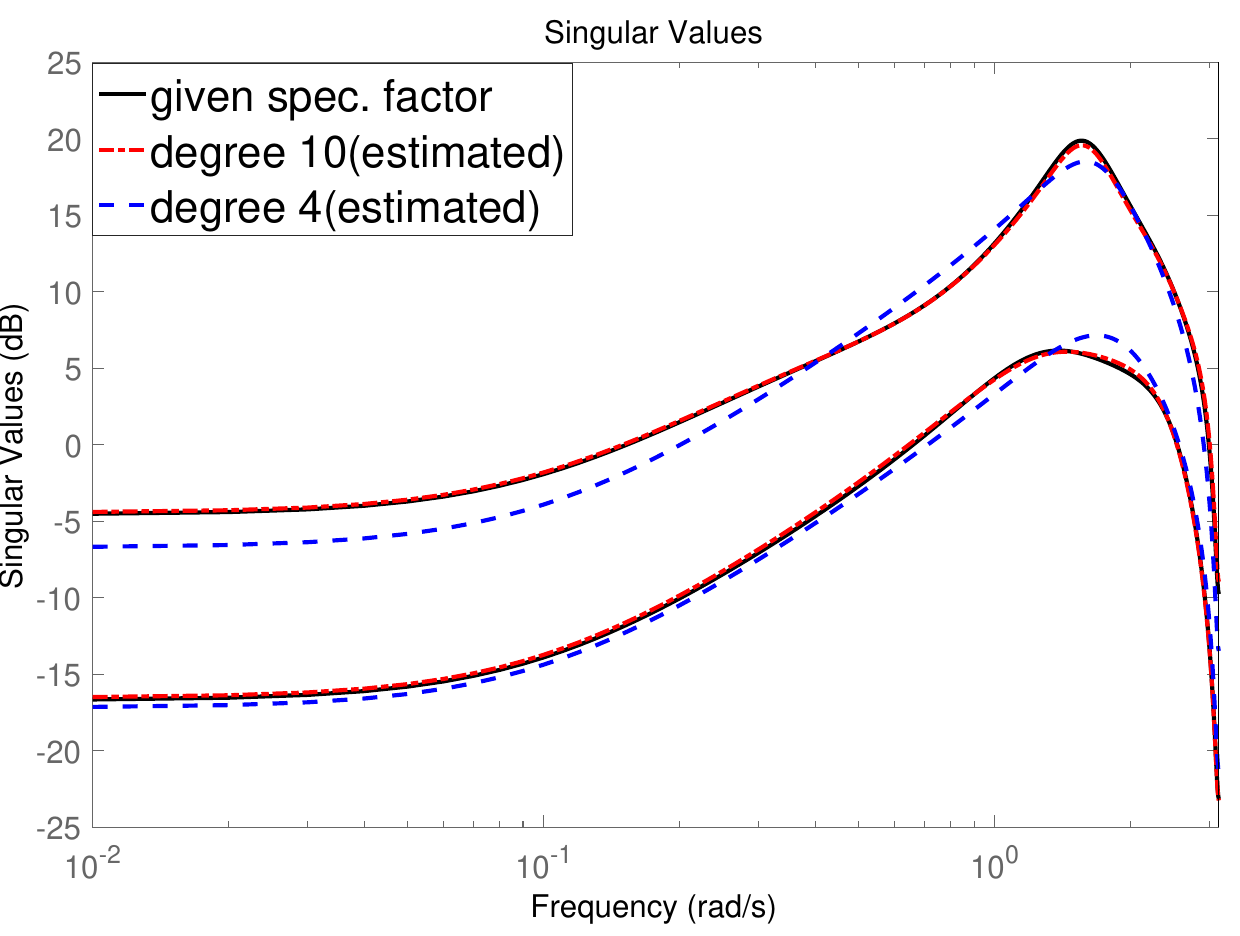}
	\caption{Estimated singular values and the true ones}
	\label{figure2}
\end{figure}

 Here the estimated degree 10 system estimates the true system \eqref{truesystem} perfectly, as the black curves of the given spectral factor  are completely covered by the red estimate curves. However, the estimated system of degree 4 approximates the true system well. 
\begin{figure*}[!htp]
	\centering
	\includegraphics[width = 0.70\textwidth]{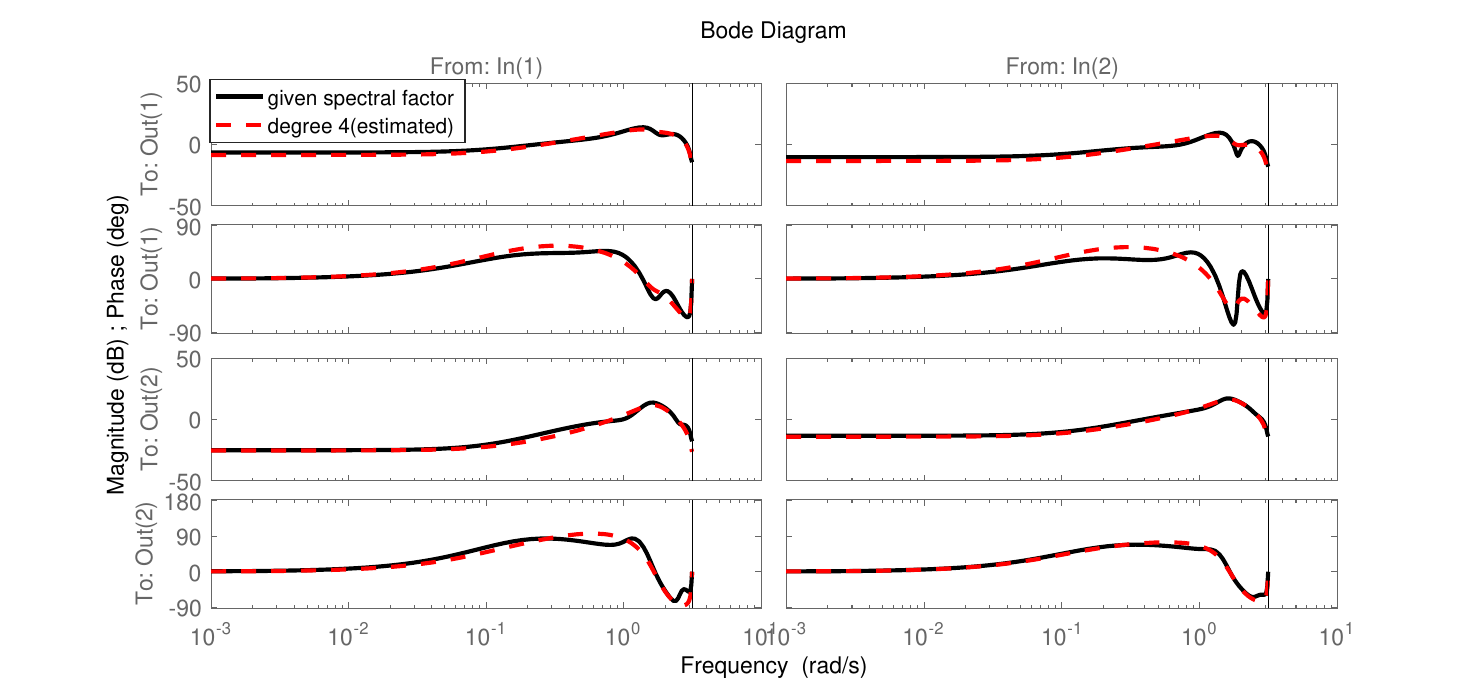}
	\caption{Bode plot}
	\label{figure4}
\end{figure*}
Fig.~\ref{figure4} plots the magnitude and the phase of the frequency response of each input/output pair in the true spectral factor and estimated one with degree 4.  The first column of plots shows the response from the first input to each output. The second column shows the response from the second input to each output. The first and the third line are the magnitudes of the frequency response, and the second and the fourth line are the phases of the frequency response.

\section{concluding remarks}\label{sec:conclusions}

We have shown that the modified Riccati equation  introduced in \cite{BLpartial} for solving the covariance extension problem can be used for very general analytic interpolation problems (with both rationality and derivative constraints) by merely changing certain parameters computed from data. A robust and efficient numerical algorithm based on homotopy continuation has been provided. There are still some open questions in the multivariable case. The most general formulation of the multivariable analytic interpolation with rationality constraints has been marred by difficulties to establish existence and, in particular, uniqueness in the various parameterizations \cite{Gthesis,BLN,G2006,G2007,FPZbyrneslindquist,RFP,Takyar,ZhuBaggio,Zhu}, and we have encountered similar difficulties here. Our approach attacks these problems from a different angle and might put new light on these challenges. Therefore future research efforts will be directed towards settling these intriguing open questions in the context of the modified Riccati equation \eqref{multCEE}.

\appendix

\subsection{Proof of Proposition~\ref{w2uUprop}}

From \eqref{uU}, \eqref{Vmatrix} and \eqref{D} we have 
\begin{equation}
\label{D2u}
u=\begin{bmatrix}0&I_{n}\end{bmatrix}V^{-1}Te,
\end{equation}
where 
\begin{subequations}\label{DDj}
\begin{equation}
\label{D2}
T=\text{diag}\,(D_0,\dots,D_m) =(W+\tfrac{1}{2}I)^{-1}(W-\tfrac{1}{2}I).
\end{equation}
with
\begin{equation}\label{Dj}
D_{j}=\begin{bmatrix}
d_{j0}&~&~&~\\
d_{j1}&d_{j0}&~&~\\
\vdots&\ddots&\ddots&~\\
d_{jn_{j}-1}&\cdots&d_{j1}&d_{j0}
\end{bmatrix}
\end{equation}
\end{subequations}
Consequently,
\begin{equation}
\label{u}
u=Md,
\end{equation}
where $d$ is the $n$-vector 
\begin{displaymath}
d=\begin{bmatrix}d_0'&d_{10}'&d_1'&\cdots&d_{m0}'&d_m'\end{bmatrix}',
\end{displaymath}
 and $M$ is the nonsingular $n\times n$ matrix obtained by deleting the first row and the first column in $V^{-1}$. We want to establish a diffeomorphism $d=\varphi(w)$ from  the $n$-vector \eqref{wdefn}, i.e.,
 \begin{displaymath}
w=(w_0',w_{10}, w_1',w_{20}, w_2',\dots,w_{m0}, w_m')' ,
\end{displaymath}
to $d$. To this end, we compute $D_j$ to obtain
\begin{align*}
D_j:=&(W_{j}+\frac{1}{2}I)^{-1}(W_{j}-\frac{1}{2}I)\\
=&\begin{bmatrix}
 (w_{j0}+\frac{1}{2})^{-1}(w_{j0}-\frac{1}{2})&0\\
 C_j^{-1}w_j(w_{j0}+\frac{1}{2})^{-1}&C_j^{-1}(C_j-I)
  \end{bmatrix}.
 \end{align*}
 Therefore,
 \begin{displaymath}
 \begin{bmatrix}d_{j0}\\d_j\end{bmatrix}= \begin{bmatrix}(w_{j0}+\frac{1}{2})^{-1}(w_{j0}-\frac{1}{2})\\ C_j^{-1}w_j(w_{j0}+\frac{1}{2})^{-1}\end{bmatrix}
 \end{displaymath}
 from which we have $w_j=C_j(w_{j0}+\tfrac12)d_j$ , $w_{j0}=\tfrac12(1+d_{j0})(1+d_{j0})^{-1}$ and
\begin{equation} \label{S}
S_j:=C_j^{-1}(C_j-I)=\begin{bmatrix}d_{j0}&~&~\\\vdots&\ddots&~\\d_{jn_{j}-2}&\cdots&d_{j0}\end{bmatrix}.        
\end{equation}
 Hence, we have the smooth maps
\begin{equation}
\label{wd}
\begin{split}
w_j&=(I-S_j)^{-1}(1-d_{j0})^{-1}d_j\\
d_j&=(w_{j0}+\tfrac12)^{-1}C_j^{-1}w_j
\end{split}
\end{equation}
defining a diffeomorphism $d=\varphi(w)$ from $w$ to $d$. Thus, since the matrix $M$ in  \eqref{u} is nonsingular,  $u=M\varphi(w)$ is the sought diffemorphism $\omega$.

Finally, it follows from \eqref{DDj} that there is a linear map $N$ such that $D=N(d)=N(M^{-1}u)$, and hence there is a linear map $L$ such that  $U=Lu$, as claimed.

\bibliography{ifacconf}    

\begin{IEEEbiography}[{\includegraphics[width=1in,height=1.25in,clip,keepaspectratio]{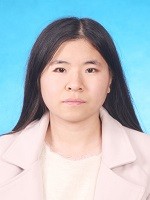}}]
{\bf Yufang Cui} (S'19) received the B.S. degree in automation from Northwestern Polytechnical University, Xi'an, China, in 2017, and the M.S. degree in control engineering from Shanghai Jiao Tong University, Shanghai, China, in 2020.
	Her research involves analytic interpolation theory with complexity constraints and its applications in control and system identification.
	
	Yufang Cui was a Finalist for the Zhang Si-Ying Outstanding Youth Paper Award at the 31st Chinese Control and Decision Conference (CCDC2019).
\end{IEEEbiography}

\begin{IEEEbiography}[{\includegraphics[width=1in,height=1.25in,clip,keepaspectratio]{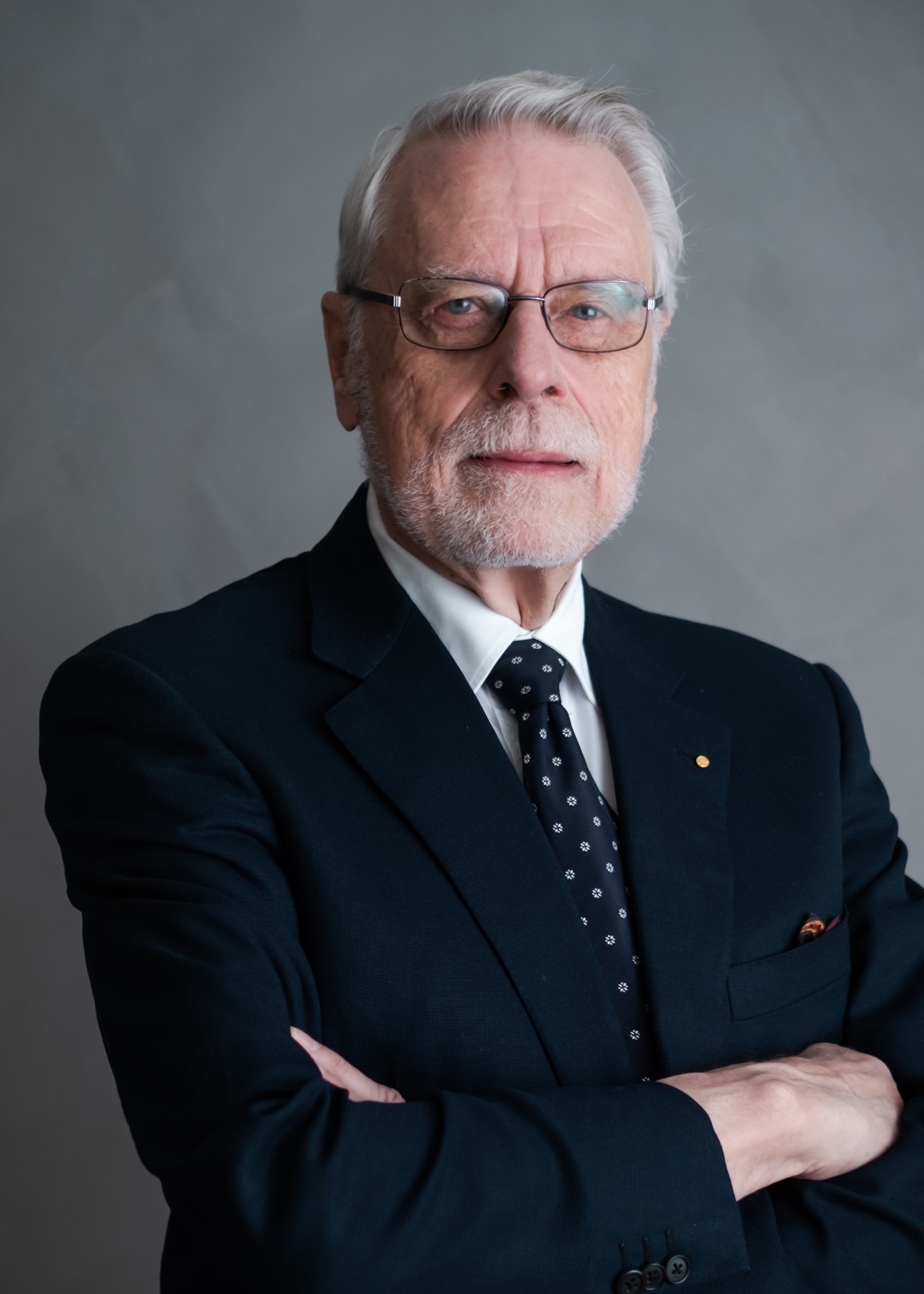}}]{Anders Lindquist}
(M’77--SM’86--F’89--LF’10) received the Ph.D. degree in optimization and systems theory from the Royal Institute of Technology, Stockholm, Sweden, in 1972, and an honorary doctorate (Doctor Scientiarum Honoris Causa) from Technion (Israel Institute of Technology) in 2010.

He is currently a Zhiyuan Chair Professor at Shanghai Jiao Tong University,  China, and Professor Emeritus at the Royal Institute of Technology (KTH), Stockholm, Sweden. Before that he had a full academic career in the United States, after which he was appointed to the Chair of Optimization and Systems at KTH.

Dr. Lindquist is a Member of the Royal Swedish Academy of Engineering Sciences, a Foreign Member of the Chinese Academy of Sciences, a Foreign Member of the Russian Academy of Natural Sciences, a Member of Academia Europaea (Academy of Europe), an Honorary Member the Hungarian Operations Research Society, a Fellow of SIAM, and a Fellow of IFAC. He received the 2003 George S. Axelby Outstanding Paper Award, the 2009 Reid Prize in Mathematics from SIAM, and the 2020 IEEE Control Systems Award, the IEEE field award in Systems and Control.
\end{IEEEbiography}

\end{document}